\newcommand\Borel{\mathcal{B}}
\newcommand\C{\mathbb{C}}
\newcommand\D{\mathbb{D}} 
\newcommand\Dk{\mathbb{D}^\k}
\newcommand\N{\mathbb{N}}
\newcommand\R{\mathbb{R}}
\renewcommand\u{{\bm{u}}}
\newcommand\w{\bm{w}}
\newcommand\x{{\bm{x}}}
\newcommand\y{{\bm{y}}}
\newcommand\bN{{\bm{N}}}
\newcommand\X{{\bm{X}}}
\newcommand\Y{{\bm{Y}}}
\newcommand\n{\mathsf{n}}
\newcommand\m{\mathsf{m}}
\renewcommand\k{\mathsf{k}}
\newcommand\f{\mathfrak{f}}
\newcommand\g{\mathfrak{g}}
\newcommand\lab{\mathfrak{l}}
\newcommand\unl{\mathfrak{u}}
\newcommand\e{\varepsilon}
\renewcommand\tilde{\widetilde}
\renewcommand\hat{\widehat}
\renewcommand\#{\sharp}
\newcommand\Dc{\mathcal{D}_{\circ}}
\newcommand\Dcmu{\mathcal{D}_{\circ}^{\mu}}
\newcommand\Dck{\mathcal{D}_{\circ}^\k}
\newcommand{\Dcmuk}{\mathcal{D}_{\circ}^{\muk}}
\newcommand{\Dmu}{\mathcal{D}^{\mu}}
\newcommand{\Dmuk}{\mathcal{D}^{\mu^{[\k]}}}
\newcommand{\Dinfmuk}{\mathcal{D}_{\circ}^{\muk}}
\newcommand\Emu{{\mathcal{E}^{\mu}}}
\newcommand\Emuk{\mathcal{E}^{\mu^{[\k]}}}
\newcommand\Dqban[1]{\mathbb{D}^{\n, \m, #1}_{r,\ell, q, \eta}}
\renewcommand\P{{\mathbb P}}
\newcommand\E{{\mathbb E}}
\newcommand\muk{\mu^{[\k]}}
\newcommand\muxnm{(\check{\mu}_{\x_\k})_{r,\ell, \eta}^{\n,\m}}
\newcommand\Dq{\mathbb{D}^{\n, \m}_{r,\ell, q, \eta}}
\newcommand\nadeffsa[2]{\nabla_{#1}^{#2}}
\newcommand{\Capa}{{\rm Cap}}
\newcommand\1{{\bf 1}}
\newcommand\hiku[1]{\langle #1 \rangle}
\newcommand{\chesig}{\check{\sigma}}
\newcommand\Qr{U_{2^r}}
\newcommand{\M}{\mathfrak{M}}
\newcommand{\Msi}{\M_{\text{s.i.}}}
\newcommand\chian{\chi[\bd{a}_n]}
\newcommand\chia{\chi[\bd{a}]}
\newcommand\Ur{U_r}
\newcommand\Uri{U_{r(i)}}
\newcommand\Kpqn{\langle \mathfrak{K}[\bd{a}]_{p,q,n} \cap \mathfrak{I}_k\rangle }
\newcommand{\qdet}{{\rm qdet}}
\newcommand{\Ai}{{\rm Ai}}
\newcommand{\bd}[1]{\mbox{\boldmath$#1$}}
\newcommand\supp{{\rm supp}\,}
\newcounter{Const} \setcounter{Const}{0}
\newcommand\DN{\newcommand}
\DN\Ct{\refstepcounter{Const}c_{\theConst}}
\DN\cref[1]{c_{\ref{#1}}}	
\theoremstyle{definition}
\newtheorem{theorem}{Theorem}[section]
\newtheorem{lemma}[theorem]{Lemma}
\newtheorem{proposition}[theorem]{Proposition}
\newtheorem{corollary}[theorem]{Corollary}
\newtheorem{definition}[theorem]{Definition}
\newtheorem{example}{Example}
\newtheorem{remark}{Remark}
\def\supp{{\rm supp}\,}
\newcommand\Kpqnponemone{\langle \mathfrak{K}[\bd{a}]_{p+1,q+1,n} \cap \mathfrak{I}_k^{[1]}\rangle }
\newcommand\Kpqnmone{\langle \mathfrak{K}[\bd{a}]_{p,q,n} \cap \mathfrak{I}_k^{[1]}\rangle }
\title{\bf Stochastic differential equations \\ for infinite particle systems of jump type \\ with long range interactions}
\author{Syota Esaki\thanks{The Department of Applied Mathematics, 
Fukuoka University, Fukuoka 814-0180, Japan; e-mail: sesaki@fukuoka-u.ac.jp} and Hideki Tanemura\thanks{Department of Mathematics, Keio University, Yokohama 223-8522, Japan.
e-mail: tanemura@math.keio.ac.jp}}
\begin{document}

\maketitle
\begin{abstract}

Infinite-dimensional stochastic differential equations (ISDEs) describing systems with an infinite number of particles are considered.
Each particle undergoes a L\'evy process, and the interaction between particles is determined by the long-range interaction potential. The potential is of Ruelle's class or logarithmic. We discuss the existence and uniqueness of strong solutions of the ISDEs.
\end{abstract}



\section{Introduction}\label{S:1} 
In this paper, we study stochastic processes describing infinite-particle jump-type systems with long-range interactions.
The first author has previously constructed Hunt processes describing jump-type systems of unlabeled particles with interaction using Dirichlet form technique \cite{E.19}. The space of configurations of unlabeled particles is represented as the set of non-negative integer valued Radon measures:
$$
\M= \big\{ \xi = \sum_j \delta_{x^j}; \xi(K)<\infty\mbox{ for all compact sets $K \subset \R^d$} \big\},
$$
where $\delta_x$ denotes the delta measure at $x$. We endow $\M$ with the vague topology. Then $\M$ is a Polish space and $\mathfrak{N}\subset \M$ is relatively compact if and only if $\sup_{\xi\in\mathfrak{N}}\xi(K)<\infty$ for any compact set $K\subset\R^d$. 
For $x,y\in \R^d$ and $\xi \in \M$, we write $\xi^{xy}=\xi-\delta_x+\delta_y$ and $\xi\setminus x = \xi-\delta_x$ if $\xi(\{x\})\ge 1$.

Let $\mu$ be a probability measure on $\M$ with correlation functions $\rho^\k$ and reduced Palm measures $\mu_{\x_\k}$, as defined in (\ref{:2a}) and (\ref{:2b}) respectively, for $\x_\k\in (\R^d)^\k$, $\k\in \N$. We also assume that for $x,y\in\R^d$, the Radon-Nikodym derivative $d\mu_y/d\mu_x$ of $\mu_y$ with respect to $\mu_x$ exists, that is $\mu_x$ and $\mu_y$ are equivalent.
An infinite-particle jump-type system is characterized by its rate function $c(x,y,\xi)$, $x,y\in \R^d$, $\xi\in \M$, which controls the jump rate from $x$ to $y$ under the configuration $\xi$.
In this paper, we consider the case in which the rate function $c$ is given by
\begin{equation*}
c(x,y,\xi)=
\begin{cases}
\frac{1}{2}p(x,y)\mathsf{d}^\mu(x,y,\xi\setminus x),
\quad &\mbox{ if }x\in \xi,
\\
 0 \quad &\mbox{ if }x\notin \xi,
\end{cases}
\end{equation*}
with
\begin{align*}
\mathsf{d}^\mu(x,y,\eta) = 1+ \frac{\rho^1(y)}{\rho^1(x)}\frac{d\mu_y}{d\mu_x}(\eta)
\end{align*}
and a positive measurable symmetric function $p(x,y)=p(y,x)$ with conditions ({\bf p.1}) and ({\bf p.2}) specified in Section 2.
For example, the rate of the $\alpha$-stable process,
\begin{align*}\label{stable}
p(x, y) = |x-y|^{-d-\alpha}, \text{ $x,y \in\R^d$, $\alpha \in (0, 2)$}
\end{align*} 
satisfies these conditions.

We introduce the linear operator $L_0$ on the space of local smooth functions $\Dc$ defined by 
\begin{align*}
L_0 f(\xi) = \int_{\R^d} \xi(dx) \int_{\R^d} dy \; c(x,y,\xi)\left\{f(\xi^{x y})-f(\xi)\right\}, 
\end{align*}
and the associated bilinear form $\Emu$ on $\Dcmu$ in (\ref{:2e}) given by
\begin{align*}
\Emu(f, g) = \frac{1}{2} \int_{\M} \mu(d\xi) \int_{\R^d} \xi(dx) \int_{\R^d} dy \; p(x,y)\{ f(\xi^{x y})-f(\xi) \} \{ g(\xi^{x y})-g(\xi) \}. 
\end{align*}
In this situation, the rate function $c$ satisfies the following detailed balance condition: 
\begin{align*}
c(y, x,\xi^{x y}) = c(x,y,\xi) \frac{\rho^1(x)}{\rho^1(y)} \frac{d\mu_x}{d\mu_y}(\xi \setminus y), \quad x, y \in \R^d. 
\end{align*}
In \cite{E.19} sufficient conditions for the closability of the bilinear form $(\Emu, \Dcmu)$ and the quasi-regularity of its closure $(\Emu, \Dmu)$ were discussed.
One of the sufficient conditions for $\mu$ that ensures the closability of $(\Emu, \Dcmu)$ was given by employing the quasi-Gibbs property  (Definition \ref{D:2a}) introduced by Osada \cite{o.rm, o.rm2}.
The class of quasi-Gibbs measures includes Sine, Airy, Bessel and Ginibre random point fields (RPFs), which are significant RPFs in random matrix theory,  in addition to the Gibbs measures of Ruelle's class. 
Sufficient conditions for the quasi-regularity of the Dirichlet form $(\Emu, \Dmu)$ are given in \As[A.2] in Section 2. 
Under these conditions, the associated Hunt process $\Xi_t$ with the reversible measure $\mu$ exists.
Our results can be applied to interacting $\alpha$-stable systems. 
Using \As[p.2] and \As[A.2] for translation-invariant RPFs (for instance, the Dyson or Ginibre RPF) we can construct interacting $\alpha$-stable systems for any $\alpha \in (0, 2)$. Note that the $1$-correlation function $\rho^1$ of Airy RPFs satisfies $\rho^1(x) = O(|x|^{1/2})$ as $x \to -\infty$, and so the parameter $\alpha$ for Airy interacting systems is restricted to $\alpha \in (\frac{1}{2}, 2)$.

We label each particle of the process $\Xi_t$ to obtain the $(\R^d)^{\N}$-valued strong Markov process $\X_t=(X_t^j)_{j\in\N}$ and present its stochastic differential equation (SDE) representation. 
For Dirichlet processes, 
the main tool for obtaining the representation is Fukushima's decomposition. Because the processes $X_t^j$, $j\in\N$ are not Dirichlet processes for $\Xi_t$, we employ the argument in \cite{o.tp}. We introduce a sequence of Dirichlet forms $(\Emuk, \Dmuk)$, $\k\in\N$ which are the closure of $(\Emuk, \Dcmuk)$ defined in (\ref{:31e}). We show that the Dirichlet forms $(\Emuk, \Dmuk)$, $\k\in\N$ are quasi-regular (Theorem \ref{T:31a}) and the sequence of the associated processes $(\X_t^{[\k]}, \Xi_t^{[\k]})$, $k\in\N$ has the following consistency condition:
$$
\Xi_t^{[\k]}= \Xi_t^{[\k+1]}+ \delta_{\X_t^{[\k],\k}}
\quad \mbox{and} \quad
\X_t^{[k+1]}= (\X_t^{[\k],1}, \X_t^{[\k],2},\dots,\X_t^{[\k],\k},\X_t^{[\k+1],\k+1}),
$$
which implies that the process $\X_t=(X_t^j)_{j\in\N}$ exists:
\begin{align*}
&\Xi_t =\Xi_t^{[\k]}+\sum_{j=1}^\k\delta_{X_t^j}, 
\quad \X_t^{[\k]} = (X_t^j)_{j=1}^\k, 
\quad \k\in\N
\end{align*}
(Theorem \ref{T:31c}).
For each $j\in\N$, $X_t^j$ is a Dirichlet process for $\{(\X_t^{[\k]}, \Xi_t^{[\k]})\}_{\k\in\N}$, which is composed using the sequence $(\X_t^{[\k]}, \Xi_t^{[\k]})$, $\k\in\N$. Thus, we can apply Fukushima's decomposition and obtain the following infinite-dimensional SDE (ISDE) representation:
\begin{equation}\label{:1a}
X_t^j=X_0^j+\int_0^t \int_S  \int_0^\infty N^j(dsdudr) \;
u a^j(u,r,\X,s),
\quad j\in\N ,
\end{equation}
where
$\bN=(N^j)_{j\in\N}$ are independent Poisson RPFs on $[0,\infty)\times \R^d \times [0,\infty)$ whose intensity measure is the Lebesgue measure $dsdudr$, and $a^j$, $j\in \N$ are coefficients defined as
$$
a^j(u,r,\X,s)=\mathbf{1}(0\le r \le c(X_{s-}^j,X_{s-}^j+u, \Xi_{s-})),
\quad j\in\N ,
$$
(Theorem \ref{T:32a}).

The ISDE representation implies the existence of (weak) solutions of (\ref{:1a}). 
We call a solution $\X$ of (\ref{:1a}) a strong solution if $\X$ is a function of $\bN$ (Definition \ref{D:33c}).
We examine the existence and uniqueness of a strong solution of (\ref{:1a}) by employing the argument in \cite{o-t.tail}, which was used for the ISDE of interacting Brownian motions. This is sufficiently robust to apply to our ISDE. 
For a solution $\X$ and $m\in\mathbb{N}$ we set
\[
\X^{[\m]}=(X^1,X^2,\dots,X^\m),
\quad
\X^{\m*}=(X^{\m+1}, X^{\m+2},\dots).
\]
For a given $\X^{\m*}$ we consider the following $\m d$-dimensional SDEs:

\begin{equation}\label{:1b}
Y_t^{\m,j}(t)=Y_0^{\m, j}+\int_0^t \int_S  \int_0^\infty N^j(dsdudr) \;
u a^j(u,r,(\Y^\m,\X^{\m*}),s),
\; 1\le j \le \m.
\end{equation}
We call the sequence $(\Y^\m=(Y^{\m,1}, Y^{\m,2},\dots,
Y^{\m,\m}))_{\m\in\N}$ solutions of an infinite system of 
finite-dimensional SDEs associated with (\ref{:1a}).
Suppose that (\ref{:1b}) has a unique strong solution in the sense of Definition \ref{D:33g}. Then $(\Y^\m,\X^{\m*})$, $\m\in\N$ satisfies the following consistency condition:
\begin{align*}
(\Y^\m,\X^{\m*})=(\Y^{\m+1},\X^{\m+1*})=\X, \quad \m\in \N, \; \mbox{ a.s. }
\end{align*}
From the fact the solution $\X$ is measurable with respect to the $\sigma$-field generated by $\bN=(N^j)_{j\in\N}$, $\X_0=(X_0^j)_{j\in\N}$ and the tail $\sigma$-field
\begin{align*}
\mathcal{T}_{\rm path}((\R^d)^{\mathbb{N}})=\bigcap_{\m=1}^\infty
\sigma( \X^{\m*}).
\end{align*}
The existence of a strong solution is derived from the triviality of $\mathcal{T}_{\rm path}((\R^d)^{\mathbb{N}})$ concerning the conditional distribution of $\X$ given by $\bN$, and the uniqueness of solutions is derived from the family of tail events with probability $1$ being independent of the conditional distribution \cite[First tail theorem]{o-t.tail}.
Sufficient conditions for such solutions are given in \cite[Second tail theorem]{o-t.tail}. 

The reminder of this paper is organized as follows:
In Section 2, we prepare some terminology and previous results.
Section 3 states the main results and presents some relevant examples.
In Section 4, we give proofs of the theorems.
Finally, Section 5 derives the sufficient conditions for the assumptions in the theorems and checks the assumption for the examples in Section 3.
\vskip 10mm

\section{Preliminaries}\label{S:2} 

Let $S$ be a closed set in $\R^d$ such that $0 \in S$ and $\overline{S^{{\rm{int}}}}=S$, where $S^{{\rm{int}}}$ denotes the interior of $S$. 
Let $\M$ be the configuration space over $S$ defined by
$$
\M=\M(S)=\{ \xi = \sum_{j\in\Lambda} \delta_{x^j}; \xi(K)<\infty \mbox{ for all compact sets } K \subset S\},
$$ 
where $\Lambda$ is a countable index set and $\delta_a$ denotes the delta measure at $a$.  $\M$ is a Polish space with the vague topology. For a topological space $\mathcal{X}$ we denote the topological Borel field of $\mathcal{X}$ as $\mathcal{\Borel}(\mathcal{X})$. For $x,y\in \R^d$ and $\xi \in \M$, we write $\xi^{xy}=\xi-\delta_x+\delta_y$ and $\xi\setminus x = \xi-\delta_x$ if $\xi(\{x\})\ge 1$. Let $\mu$ be a probability measure on $\M$. 
We say that a nonnegative permutation-invariant function $\rho^{\k}$ on $S^{\k}$ is the $\k$-correlation function of $\mu$ if 
\begin{equation}\label{:2a} 
\int_{A_1^{\k_1} \times \cdots \times A_\ell^{\k_\ell}} \rho^{\k}(\x_\k)d\x_\k = \int_{\M} \prod_{i=1}^\ell \frac{\xi(A_i)!}{(\xi(A_i)-\k_i)!}\mu(d\xi)
\end{equation}
for any sequence of disjoint bounded measurable subsets $A_1, \ldots, A_\ell \subset S$ and a sequence of positive integers $\k_1, \ldots, \k_\ell$ satisfying $\k_1+\cdots+\k_\ell=\k$. 
When $\xi(A_i)< \k_i$, according to our interpretation, 
$\xi(A_i)!/(\xi(A_i)-\k_i)!=0$ by convention.
Let $\tilde{\mu}^{[\k]}$ be the measure on $(S^{\k}\times\M, \Borel(S^{\k})\otimes \Borel(\mathfrak{\M}))$ determined by
$$
\tilde{\mu}^{[\k]}\Big(\prod_{i=1}^\ell A_i^{\k_i} \times \mathfrak{N}\Big)= \int_{\mathfrak{N}} \prod_{i=1}^\ell \frac{\xi(A_i)!}{(\xi(A_i)-\k_i)!}\mu(d\xi)
$$
for any $\mathfrak{N}\in \Borel(\M)$ and $\{A_i\}_{i=1}^\ell$ as defined above.
The measure $\tilde{\mu}^{[\k]}$ is called the $\k$-Campbell measure of $\mu$. In the case where $\mu$ has a $\k$-point correlation function $\rho^{\k}$, there exists a regular conditional probability measure $\tilde{\mu}_{\x_\k}$
for $\x_\k = (x_\k^1, \ldots, x_\k^\k) \in S^\k$
satisfying 
$$
\int_{A_1^{\k_1} \times \cdots \times A_\ell^{\k_\ell}} 
\tilde{\mu}_{\x_\k}(\mathfrak{N})\rho^{\k}(\x_\k)d\x_\k = \tilde{\mu}^{[\k]}\big( \prod_{i=1}^\ell A_i^{\k_i} \times \mathfrak{N} \big).
$$
The measure $\tilde{\mu}_{\x_\k}$ is called the Palm measure of $\mu$ at $\x_\k \in S^{\k}$ (see e.g., \cite{kal}).
We use the probability measure $\mu_{\x_\k}\equiv \tilde{\mu}_{\x_\k}(\cdot - \sum_{j=1}^\k \delta_{x_\k^j})$, which is called the reduced Palm measure of $\mu$ at $\x_\k \in S^{\k}$.
Informally, $\mu_{\x_\k}$ is given by 
\begin{align} \label{:2b}
\mu_{\x_\k} = \mu \Big(  \cdot - \sum_{j=1}^\k \delta_{x_\k^j} \Big| \text{$\xi(\{ x_\k ^j \}) \geq 1$ for any $j=1, \ldots, \k$}  \Big). 
\end{align}
We set 
\begin{equation*} 
\muk(d\x_\k d\eta) = \mu_{\x_\k}(d\eta)\rho^\k(\x_\k)d\x_\k.
\end{equation*}
We call $\mu^{[\k]}$ the reduced $\k$-Campbell measure. In this paper, we assume the existence of the reduced $\k$-Campbell measure for each $\k\in\N$. In addition, we always use $\mu^{[\k]}$ instead of $\tilde{\mu}^{[\k]}$, and call $\mu^{[\k]}$ the $\k$-Campbell measure.

In \cite{E.19} Hunt processes describing infinite-particle jump-type systems were constructed using the Dirichlet form technique. We now review the results.
Let $\Ur (x)= \{ y \in S ; |y-x| \le r \}$. We set $U_r =U_r(0)$ and denote the $\m$-product of $\Ur$ by $\Ur^\m$. 
Let $\M_{r, \m} = \{ \xi \in \M; \xi(\Ur) = \m \}$ for $r,\m \in\N$. We set maps $\pi_r, \pi_r^c : \M\to \M$ such that
$$
\pi_r(\xi) = \xi(\cdot \cap \Ur) \quad \mbox{ and }\quad \pi_r^c(\xi) = \xi(\cdot \cap \Ur^c).
$$
A function $\x_{r, \m} : \M_{r, \m} \to \Ur^{\m}$ is called a $\Ur^{\m}$-coordinate (or a coordinate on $\M_{r, \m}$) of $\xi$ if 
\begin{equation*}
\pi_r(\xi) = \sum_{j=1}^{\m} \delta_{x_{r, \m}^j(\xi)}, \quad \x_{r, \m}(\xi)=(x_{r, \m}^1(\xi), \ldots, x_{r, \m}^{\m}(\xi)). 
\end{equation*}
For $f : \M \to \mathbb{R}$, a function $f_{r, \xi}^{\m}(x): \M \times \Ur^{\m} \to \mathbb{R}$ is called the $\Ur^{\m}$-representation of $f$ if $f_{r, \xi}^{\m}$ satisfies the following: 
\begin{enumerate}
\item[(1)] $f_{r, \xi}^{\m}(\cdot)$ is a permutation-invariant function on $\Ur^{\m}$ for each $\xi \in \M$. 
\item[(2)] $f_{r, \xi_{1}}^{\m}(\cdot) = f_{r, \xi_{2}}^{\m}(\cdot)$ \; if $\pi_r^c(\xi_{1}) = \pi_r^c(\xi_{2})$ and $\xi_{1}, \xi_{2} \in \M_{r, \m}$. 
\item[(3)] $f_{r, \xi}^{\m}(\x_{r, \m}(\xi)) = f(\xi)$ for $\xi \in \M_{r, \m}$, where $\x_{r, \m}(\xi)$ is a $\Ur^{\m}$-coordinate of $\xi$. 
\item[(4)] $f_{r, \xi}^{\m}(\cdot) = 0$ for $\xi \notin \M_{r, \m}$. 
\end{enumerate}
Note that for fixed $r, \m \in \N$, and $\xi \in \M$ $f_{r, \xi}^{\m}$ is uniquely determined and $f(\xi) = \sum_{{\m}=0}^\infty f_{r, \xi}^{\m}({\x}_{r, \m}(\xi))$. 
We say that a function $f : \M \to \R$ is {\it local} if $f$ is $\sigma[\pi_r]$-measurable for some $r \in \N$. 
When $f$ is $\sigma[\pi_r]$-measurable, its $\Ur^\m$-representation is a permutation-invariant function $f_r^\m$ on $\Ur^\m$ such that 
$$
f(\xi)=f(\pi_r(\xi))=f_r^\m(x^1,x^2,\dots,x^\m), \quad \xi\in \M_{r,\m} \mbox{ with } \pi_r(\xi)=\sum_{j=1}^\m \delta_{x^j}.
$$
We say that a local function $f$ is {\it smooth} if $f_r^\m$ is smooth for any $\m\in\N$. 
We denote the space of smooth local functions on $\M$ by $\Dc$. 
For $f, g \in \Dc$ we set $\D[f, g] : \M \to \R$ by
\begin{align}\label{:2c}
\D[f, g](\xi) = \frac{1}{2} \sum_{j\in\Lambda} \int_{S} \{ f(\xi^{x^j y})-f(\xi) \} \{ g(\xi^{x^j y})-g(\xi) \} p(x^j, y)dy, \quad \xi = \sum_{j\in\Lambda} \delta_{x^j}, 
\end{align}
where $p$ is a nonnegative measurable function on $S\times S \setminus \Delta$ with $\Delta = \{ (x, y) \in S \times S ; x = y \}$  satisfying $p(x, y)=p(y,x)$ and 
\vskip 3mm \noindent
\As[p.1] \ $p(x,y) = O(|x-y|^{-(d+\alpha)})$ as $|x-y| \to \infty$ for some $\alpha>0$. 
\vskip 1mm \noindent
\As[p.2] \ $p(x,y) = O(|x-y|^{-(d+\gamma)})$ as $|x-y| \to +0$ for some $0 < \gamma < 2$. 
\vskip 3mm\noindent
Under conditions \As[p.1] and \As[p.2] we have
\begin{equation}\notag
\int_S (1\wedge |y-x|^2) p(x,y)dy <\infty,
\quad x \in S. 
\end{equation}
For $\alpha \in (0, 2)$, a rotation-invariant, symmetric $d$-dimensional $\alpha$-stable process is related to the Dirichlet form $(\tilde{\mathfrak{E}}, \tilde{\mathfrak{D}})$ given by
\begin{align*}
&\tilde{\mathfrak{E}}(u, v) = \frac{1}{c(d, \alpha)}\int_{\R^d \times \R^d \setminus \Delta} \frac{(u(y)-u(x))(v(y)-v(x))}{|x-y|^{d+\alpha}} dxdy \quad \text{for $u, v \in \tilde{\mathfrak{D}}$}, \\
&\tilde{\mathfrak{D}} = \{ u \in L^2(\R^d, dx) , \tilde{\mathfrak{E}}(u, u) < \infty \}, 
\end{align*}
where $c(d, \alpha)$ is the positive constant given by 
$$ 
c(d, \alpha) = \frac{2^{-\alpha+1}\pi^{\frac{d+1}{d}}}{\Gamma(\frac{\alpha}{2}+1)\Gamma(\frac{\alpha+d}{2})\sin\frac{\pi\alpha}{2}}, 
$$
and $\Delta = \{ (x, y) \in \R^d \times \R^d; x=y \}$. 
Note that the rate of the $\alpha$-stable process,
$p(x, y) = |x-y|^{-d-\alpha}$, $x,y \in\R^d$, $\alpha \in (0, 2)$
satisfies conditions \As[p.1] and \As[p.2].

For some given $f$ and $g$ in $\Dc$, the right-hand side of (\ref{:2c}) depends only on $\xi$. Hence, $\D[f, g](\xi)$ is well defined. 
We introduce the bilinear form defined by
\begin{equation}\label{:2e}
\begin{split}
&\Emu(f, g) = \int_{\M} \D[f, g](\xi)\mu(d\xi), \\ 
&\Dcmu = \{ f \in \Dc \cap L^2(\M, \mu) ; \Emu(f, f) < \infty \}. 
\end{split}
\end{equation}
We make the following assumptions:
\vskip 3mm \noindent
\As[A.1] \quad  $(\Emu, \Dcmu)$ is closable on $L^2(\M, \mu)$. \\ 
\\
\As[A.2] \quad $\rho^1(x) = O\left( |x|^{\kappa} \right)$ as $|x| \to \infty$ for some $0 \leq \kappa < \alpha$, where $\alpha >0$ in \As[p.1]. 
\vskip 3mm

For a topological space $\mathcal{X}$, we denote the set of all $\mathcal{X}$-valued right continuous functions on $[0,\infty)$ with left limits by $W(\mathcal{X})$. Here, we induce $W(\mathcal{X})$ to have the Skorohod topology. 
The following lemma is a refinement of \cite[Theorem 2.1]{E.19}. The proof is essentially the same as that of Theorem 3.1 and is omitted. 

\begin{lemma}(\cite[Theorem 2.1]{E.19})\label{L:2a}
Assume that \As[A.1] and \As[A.2] hold. Let $(\Emu, \Dmu)$ be the closure of $(\Emu, \Dcmu)$ on $L^2(\M, \mu)$. Then, we have the following. 
\begin{enumerate}
\item $(\Emu, \Dmu, L^2(\M, \mu))$ is a quasi-regular Dirichlet space. 

\item The Hunt process $(\Xi_t, \{ \P_{\xi} \}_{\xi \in \M})$ associated with $(\Emu, \Dmu, L^2(\M, \mu))$ exists. 

\item There exists a measurable subset $\M_{\mu}$ satisfying 
\begin{align}\notag
\P_{\xi}(\Xi \in W(\M_{\mu}))=1
\quad \mbox{ for any $\xi\in \M_{\mu}$.}
\end{align}
\item The process $(\Xi_t, \{ \P_{\xi} \}_{\xi \in \M})$ is reversible with respect to $\mu$, that is,
$$
\P_{\mu}(\Xi_s\in \mathfrak{N}_1, \Xi_t\in \mathfrak{N}_2)
=\P_{\mu}(\Xi_s\in \mathfrak{N}_2, \Xi_t\in \mathfrak{N}_1),
\quad s<t, \quad \mathfrak{N}_1, \mathfrak{N}_2 \in \Borel(\M),
$$ 
where $\P_\mu= \int_{\M}\P_\xi \; \mu(d\xi)$. 
\end{enumerate}
\end{lemma}

\vskip 3mm

We set $\Omega = W(\M)$ and $\Borel_t(\Omega)=\sigma[\omega_s ; 0\le s\le t]$.
We introduce the $\sigma$-fields defined by
$$
\mathcal{F}_t = \bigcap_{\nu} \mathcal{F}_t^\nu, \quad \mathcal{F}=\bigcap_{\nu} \Borel(\Omega)^\nu,
$$
where the intersections are taken over all Borel probability measures $\nu$,
$\mathcal{F}_t^\nu$ is the completion of 
$\Borel_{t}^+ (\Omega)=\cap_{\varepsilon >0}\Borel_{t+\varepsilon}(\Omega)$
with respect to $\P_\nu=\int_{\M}\P_\xi \; \nu(d\xi)$, and $\Borel(\Omega)^\nu$ is that of $\Borel(\Omega)$.
Let $\Xi=\{\Xi_t\}$ be the canonical process, that is,
$\Xi_t(\omega)=\omega_t$ for $\omega\in \Omega$.
 Then, $\Xi$ is adapted to $\{\mathcal{F}_t\}_{t\ge 0}$.

We recall the definition of capacity \cite[Section 2.1]{fot.2}.
Let $\mathcal{O}$ denote the family of all open subsets of $\M$. Let $(\mathfrak{E}, \mathfrak{D})$ be a quasi-regular Dirichlet form on $L^2(\M,\mu)$, and $\mathfrak{E}_1(f,f)= \mathfrak{E}(f,f)+(f,f)_{L^2(\M,\mu)}$.
For $\mathfrak{O}\in \mathcal{O}$, we define
\begin{align}
&\mathcal{L}_{\mathfrak{O}}=\{u\in\mathfrak{D} : u\ge 1, \; \mbox{ $\mu$-a.e. on $\mathfrak{O}$}\}
\nonumber
\\ \notag
&{\rm Cap}^{\mu}(\mathfrak{O})=
\begin{cases}
\inf_{u\in\mathcal{L}_\mathfrak{O}}\mathfrak{E}_1
(u,u), \quad &\mathcal{L}_{\mathfrak{O}}\not=\emptyset
\\
\infty &\mathcal{L}_{\mathfrak{D}}=\emptyset
\end{cases}
\end{align}
and for any set $\mathfrak{N}\subset \M$, we set
\begin{equation} \label{:2g}
{\rm Cap}^{\mu}(\mathfrak{N})=\inf_{\mathfrak{O}\in\mathcal{O}, \mathfrak{N}\subset \mathfrak{O}}{\rm Cap}^{\mu}(\mathfrak{\mathfrak{O}}).
\end{equation}
We call this the capacity of $\mathfrak{N}$.
We use the abbreviated form ``q.e.'' for ``quasi everywhere'' (see \cite[p155]{fot.2} for a precise definition).

We now define the quasi-Gibbs measures. 
Let $\Phi : S\to\R\cup \{\infty\}$ be a self-potential, and
$\Psi : S\times S \to\R\cup \{\infty\}$ be an interaction potential with $\Psi(x,y)=\Psi (y,x)$.
We introduce the Hamiltonian $\mathcal{H}_r$ on $\Ur$ defined by
\begin{equation*}
\mathcal{H}_r(\zeta) = \sum_{x^j \in \Ur} \Phi(x^j) + \sum_{x^i, x^j \in \Ur, i<j} \Psi(x^i, x^j) \quad \text{for $\zeta = \sum_j \delta_{x^j}$. }
\end{equation*}

Let $\Lambda_r$ be the Poisson RPF whose intensity is the Lebesgue measure on $\Ur$, and set $\Lambda_r^\m=\Lambda_r (\cdot \cap \M_{r,\m})$.
\begin{definition} \label{D:2a}
An RPF $\mu$ is called a $(\Phi, \Psi)$-quasi-Gibbs measure if its regular conditional probabilities 
\begin{equation*}
\mu_{r, \xi}^\m(\cdot) = \mu(\left. \pi_r(\eta) \in \cdot \ \right| \pi_r^c(\eta) = \pi_r^c(\xi), \eta(\Ur) = \m)
\end{equation*}
satisfy
\begin{equation} \label{:2h}
\cref{qg}^{-1} e^{-\mathcal{H}_r(\zeta)} \Lambda_r^\m(d\zeta) \leq \mu_{r, \xi}^\m(d\zeta) \leq \cref{qg} e^{-\mathcal{H}_r(\zeta)} \Lambda_r^\m(d\zeta)
\quad \mbox{ for any $r, \m \in \N$ and $\mu$-a.s. $\xi$}. 
\end{equation}
Here $\Ct \label{qg}=\cref{qg}(r, \m, \xi)$ is a positive constant depending on $r, \m $, and $\xi$. For two measures $\mu$, $\nu$ on a $\sigma$-field $\mathcal{F}$, we write $\mu \leq \nu$ if $\mu(A) \leq \nu(A)$ for all $A \in \mathcal{F}$. 
\end{definition}
\vskip 3mm

We make the following assumption. 
\vskip 1mm \noindent
\As[QG] \quad $\mu$ is a $(\Phi, \Psi)$-quasi-Gibbs measure. 
Moreover, there exist upper semi-continuous functions $\Phi_0: S \to \R \cup \{ \infty \}$, $\Psi_0: S \to \R \cup \{ \infty \}$ and positive constants $\Ct \label{Phi}$ and $\Ct \label{Psi}$ such that 
\begin{equation*}
\cref{Phi}^{-1}\Phi_0(x) \leq \Phi(x) \leq \cref{Phi}\Phi_0(x), \quad \text{for all $x \in S$, } 
\end{equation*}
\begin{equation*}
\cref{Psi}^{-1}\Psi_0(x-y) \leq \Psi(x, y) \leq \cref{Psi}\Psi_0(x-y), \quad \text{for all $x, y \in S$}. 
\end{equation*}
\vskip 1mm \noindent

We also introduce the following condition.
\vskip 3mm 

\noindent \As[EQ] \quad  For any $\k\in\N$ and almost all $\x_\k$ and $\y_\k$, with respect to the Lebesgue measure,
the reduced Palm measures $\mu_{\x_\k}$ and $\mu_{\y_\k}$ are equivalent and $\rho^\k(\x_\k)>0$ . 

\vskip 3mm 

Condition \As[EQ] is used to show the closability of the Dirichlet form.

\begin{lemma}(\cite[Theorem 2.5]{E.19})\label{L:2b}
Assume that \As[EQ] and \As[QG] hold. Then $(\Emu, \Dcmu)$ is closable on $L^2(\M, \mu)$. 
\end{lemma}

\noindent
\begin{remark}
We can relax condition \As[EQ] if we restrict the configuration space $\M$ to an appropriate subspace $\mathfrak{X}$ with $\mu (\mathfrak{X})=1$.
For instance, 
let $\mu$ be the canonical Gibbs state with a hard-core potential of range $r>0$. Then, $\mu$ does not satisfy \As[EQ]. However, if we consider the configuration space of hard balls 
$$
\mathfrak{X}=\{ \xi= \sum_{j \in \N} \delta_{x^j}\in \M : |x^i -x^j|\ge r, \; i\not= j \},
$$
we can readily check the condition that for   $\x_\k, \y_\k \in S^\k$ with $\eta + \sum_{j=1}^\k \delta_{x^j}, \eta + \sum_{j=1}^\k \delta_{y^j}\in\mathfrak{X}$,
$$
\frac{ d\mu_{\y_\k}}{d\mu_{\x_\k}}(\eta) \in (0,\infty) 
\text{  }.
$$
This condition can be used instead of \As[EQ] to generalize our results (see, e.g., \cite{T89}).
\end{remark}

\section{Statement of results} \label{S:3}

\subsection{Tagged particle processes and their consistency}\label{SS:31}

We introduce the subset $\Msi$ of $\M$ defined by
\begin{equation*}
\Msi = \{ \text{$\xi \in \M$; $\xi( \{ x \} ) \leq 1$ for all $x \in S$ and $\xi(S)=\infty$} \}.
\end{equation*}
Then an element of $\Msi$ represents a configuration of infinitely many particles with no
multiple point. A configuration $\xi = \sum_{j=1}^{\infty} \delta_{x^j} \in \Msi$ is identical to a countable set $\{ x^j; j \in \N \}$. In the following, for simplicity of notation, we write $x\in \xi$ for $\xi \in \Msi$ and $\xi(\{x\}) = 1$.
A map $\lab$ from $\Msi$ to $S^{\N}$ is called a label if $\lab(\xi)=(x^j)_{j\in\N}$ satisfies $\xi=\sum_{j\in\N}\delta_{x^j}$.
We set $\unl((x^j)_{j\in\N})=\sum_{j\in\N}\delta_{x^j} $ for $(x^j)_{j\in\N}\in S^{\N}$ and call this an ``unlabel".
It is clear that $\unl (\lab(\xi))=\xi$ for $\xi\in\Msi$.
For any $\k\in\N$, a map $\lab^{[\k]}$ from $\Msi$ to $S^\k\times\Msi$ is called a $\k$-label if $\lab^{[\k]}(\xi)=((x^j)_{j=1}^\k,\eta)$ satisfies $\sum_{j=1}^\k \delta_{x^j}+\eta=\xi$.
We also set $\unl^{[\k]}(((x^j)_{j=1}^\k,\eta))=\sum_{j=1}^\k \delta_{x^j}+\eta$,
and often write $\unl(((x^j)_{j=1}^\k,\eta))$ instead of $\unl^{[\k]}(((x^j)_{j=1}^\k,\eta))$ when this will not cause any confusion.

Let $W_{\rm SIN}$ denote the set of elements $\Xi=\{\Xi_t\}_{t\ge 0}$ of $W(\Msi)$ such that 

\noindent (1) any pair of particles do not jump simultaneously, 

\noindent (2) there is no collision among particles,

\noindent (3) no particle explodes.

For a given label $\lab$, we can determine the labeled process $\X=(X^{j})_{j\in \N}$ associated with $\Xi \in W_{\rm SIN}$ in the following way.
We initially label the process $\Xi$ as $\lab(\Xi_0)=(X_0^{j})_{j\in\N}$, which implies that
$\Xi_0 = \sum_{j=1}^{\infty} \delta_{X_0^j}$.
By virtue of conditions (1), (2), and (3) above, we can follow the label of each particle and determine the label $\lab_{\rm path}$ for $\Xi$, 
$$
\lab_{\rm path}(\Xi)=\X=(X^j)_{j\in\N}\in W(S^{\N}),
$$
satisfying $\unl(\X_t)=\Xi_t$, $t\ge 0$.

Let $(\Xi_t, \{ \mathbb{P}_{\xi} \}_{\xi \in \M})$ be the unlabeled process constructed in Lemma \ref{L:2a}.
We state the following condition:

\vskip 2mm
\noindent
\As[SIN] $\P_{\xi}(\Xi \in W_{\rm SIN})=1 \quad \text{for q.e. $\xi$}$.
\vskip 2mm
\noindent
Note that, for $\P_\xi$-a.s. $\Xi$, any pair of particles do not jump simultaneously.
Then, $(\Xi_t, \{ \mathbb{P}_{\xi} \}_{\xi \in \M})$ satisfies \As[SIN] if and only if the following two conditions hold:
\vskip 2mm
\noindent 
\As[NCL] \quad There is no collision among particles, that is,
$\Capa^{\mu}(\Msi^c) = 0$.
\\ 
\As[NEX] \quad Any tagged particle never explodes, that is,
\begin{align*}
\P_{\xi} \left( \sup_{0 \leq t \leq T} |\lab_{\rm path}(\Xi)_t^j| < \infty \ \text{for all $T$, $j \in \N$} \right) = 1 \quad \text{for q.e. $\xi$}. 
\end{align*}
\vskip 3mm \noindent
Condition \As[SIN] implies that the labeled process $(\X,  \mathbb{P}_{\xi})$ can be constructed 
for q.e. $\xi$.
From the construction above, $\X$ is not a Dirichlet process of $\Xi$ (see, e.g., \cite{Fol81} for the definition of Dirichlet processes.)
We present another representation of $\X=(X^j)_{j \in \N}$ as a Dirichlet process of the process $\{(\X^{[\k]}, \Xi^{[\k]})\}_{\k\in\N}$ defined below following the argument used by Osada \cite{o.tp}.

Let $m_{r,T}:W(S^{\N})\to \N\cup \{\infty\}$ be a function defined by
\begin{align}\notag 
m_{r,T}(\w)=\inf \{ m\in\N
: \min_{t\in [0,T]}|w_t^j|>r \mbox{ for all $j\in\N$ such that $j>m$} \}.
\end{align}
We introduce the following assumption:
\vskip 3mm
\noindent\As[NBJ]  Only a finite number of particles visit a given bounded set during any finite time interval, that is,
$$
P(m_{r,T}(\X)<\infty)=1, \mbox{ for each $r, T \in \N$.}
$$
In Lemma \ref{L:53b}, we show that if $(\Xi_t, \P_\xi)$ satisfies \As[NCL], then $\mathfrak{l}_{\rm path}(\Xi)$ satisfies \As[NBJ].

For a given $\lab$, let $\{\lab^{[\k]}\}_{\k\in\N}$ be a consistent sequence  of $\lab^{[\k]} : \Msi \to S^{\k} \times \Msi$ such that
$$
\lab^{[\k]}(\xi)^j=\lab(\xi)^j, \quad j=1,2,\dots,\k,
\quad \text{and} \quad
\lab^{[\k]}(\xi)^{\k+1}=\sum_{j=\k+1}^{\infty}\delta_{\lab(\xi)^j},
\quad \k\in\N.
$$
We also define the consistent sequence $\lab_{\rm path}^{[\k]} : W_{\rm SIN} \to W(S^\k \times \M)$ given by
$$
\lab_{\rm path}^{[\k]}(\Xi)^j=\lab_{\rm path}(\Xi)^j, 
\quad j=1,2,\dots, \k,
\quad \text{and} \quad
\lab_{\rm path}^{[\k]}(\Xi)^{\k+1}=\sum_{j=\k+1}^{\infty}\delta_{\lab_{\rm path}(\Xi)^j},
\quad \k\in\N.
$$
We denote $\lab_{\rm path}^{[\k]}(\Xi)=(\X^{[\k]},\Xi^{[\k]})$,
and call $\X^{[\k]}$ the $\k$-tagged particle process and $\Xi^{[\k]}$ the environment process associated with $\mathfrak{l}_{\rm path}(\Xi)$.
We construct the consistent sequence of processes
$\{(\X^{[\k]},\Xi^{[\k]})\}$, $\k\in\N$ using the Dirichlet form argument.

Let $\Dck = C_0^{\infty}(S^\k) \otimes \Dc$, where $C_0^{\infty}(S^\k)$ denotes the set of all smooth functions with compact supports. For $\f, \g \in \Dck$, we set 
\begin{equation*}
\nabla^{[\k]}[\f, \g](\x_\k, \eta) = \frac{1}{2}\sum_{j=1}^{\k} \int_{S} \nadeffsa{j}{y}\f(\x_\k, \eta) \nadeffsa{j}{y}\g(\x_\k, \eta) p(x_\k^j, y)dy, 
\end{equation*}
where  
\begin{equation}\label{:31b}
\nadeffsa{j}{y} \f(\x_\k, \eta) = \f(\x_\k^{j, y},\eta) - \f(\x_\k, \eta)
\end{equation}
with 
\begin{align}\label{:31c}
\x_\k^{j, y}=(x_\k^1, \ldots, x_\k^{j-1}, y, x_\k^{j+1}, \ldots, x_\k^\k)
\end{align}
for $j=1,2,\dots, \k$, $\x_\k = (x_\k^1, \ldots, x_\k^\k) \in S^\k$, and $y\in S$. 
For $f, g \in \Dck$, we set 
\begin{equation} \label{:31d}
\Dk[\f, \g](\x_\k, \eta) = \nabla^{[\k]}[\f, \g](\x_\k, \eta) + \D[\f(\x_\k, \cdot), \g(\x_\k, \cdot)](\eta)
\end{equation}
and
\begin{equation}\label{:31e}
\begin{split}
&\Emuk(\f, \g) = \int_{S^\k \times \M} \Dk[\f, \g](\x_\k, \eta) \muk(d\x_\k d\eta), 
\\ 
&\Dcmuk = \{ \f \in \Dck \cap L^2(S^\k \times \M, \muk) ; \Emuk(\f, \f) < \infty \}.  
\end{split}
\end{equation}
For $\k \in \N$, we introduce the following assumptions, which are analogous to \As[A.1] and \As[A.2], respectively. 
\vskip 1mm \noindent
\As[A.1.$\k$] \quad $(\Emuk, \Dcmuk)$ is closable on $L^2(S^\k \times \M, \muk)$. 
\vskip 2mm \noindent
\As[A.2.$\k$] \quad $\rho_{\x_\k}^1(y) = O\left( |y|^{\kappa} \right)$ as $|y| \to \infty$ for a.e. $\x_\k \in S^\k$ with $\kappa $ in \As[A.2].

\vskip 3mm
\noindent We write \As[A.1.0] and \As[A.2.0] for \As[A.1] and \As[A.2], respectively.
The following theorem is a generalization of Lemma \ref{L:2a}.

\vskip 3mm
\begin{theorem} \label{T:31a}
Let $\k\in\N$.
Assume that \As[A.1.$\k$] and \As[A.2.$\k$] hold. Let $(\Emuk, \Dmuk)$ be the closure of $(\Emuk, \Dcmuk)$ on $L^2(S^\k \times \M, \muk)$. Then
\begin{enumerate}
\item $(\Emuk, \Dmuk, L^2(S^\k \times \M, \muk))$ is a quasi-regular Dirichlet space. 
\item There exists a Hunt process $((\X_t^{[\k]},\Xi_t^{[\k]}), \{ \mathbb{P}_{(\x_\k,\eta)}^{[\k]} \}_{(\x_k,\eta) \in S^\k\times\M})$ associated with the quasi-regular Dirichlet space $(\Emuk, \Dmuk, L^2(S^\k \times \M, \muk))$. 

\item The process $((\X_t^{[\k]},\Xi_t^{[\k]}), \{ \mathbb{P}_{(\x_\k,\eta)}^{[\k]} \}_{(\x_\k,\eta) \in S^\k\times\M})$ is reversible with respect to $\muk$, that is, for $\Lambda_1, \Lambda_2 \in \Borel(S^\k\times \M)$ and $0\le s<t <\infty$, 
\begin{align*}
\mathbb{M}^{[\k]}((\X_s^{[\k]},\Xi_s^{[\k]})\in \Lambda_1, (\X_t^{[\k]},\Xi_t^{[\k]})\in \Lambda_2)
= \mathbb{M}^{[\k]}((\X_t^{[\k]},\Xi_t^{[\k]})\in \Lambda_1, (X_s^{[\k]},\Xi_s^{[\k]})\in \Lambda_2), 
\end{align*}
where
\begin{align}\label{:31f}
&\mathbb{M}^{[\k]}(d\X^{[\k]} d\Xi^{[\k]})=\int_{S\times \M}
\muk (d\x_{\k} d\eta) 
\mathbb{P}_{(\x_{\k},\eta)}^{[\k]}(d\X^{[\k]} d\Xi^{[\k]}).
\end{align}
\end{enumerate}
\end{theorem}

We give the proof of this theorem in Section \ref{SS:41}.

\vskip 3mm

We define the capacity $\Capa^{\muk}$ associated with $(\Emuk, \Dmuk, L^2(S^\k \times \M, \mu^{[\k]}))$ in the same way as \eqref{:2g}.

\noindent The following theorem is a generalization of Lemma \ref{L:2b}.
\begin{theorem} \label{T:31b}
Let $\k\in\N$. Assume that \As[QG] and \As[EQ] hold.
Then, \As[A.1.$\k$] holds.
\end{theorem}
The proof of this theorem is given in Section \ref{SS:41}.

\vskip 3mm

The following theorem ensures the consistency of the sequence of processes $\{(\X^{[\k]},\Xi^{[\k]})\}$.

\begin{theorem}\label{T:31c}
Assume that \As[A.1.$\k$] and \As[A.2.$\k$] for any $\k\in\N\cup\{0\}$ hold.
Suppose that $(\Xi_t, \{ \mathbb{P}_{\xi} \}_{\xi \in \M})$ satisfies \As[SIN]. 
\\ \noindent
(i)  There exists a set $\M_0 \subset \Msi$ with 
\begin{align} \label{:31g}
&\Capa^{\mu}(\M_0^c) = 0, 
\qquad \P_{\xi} (\Xi_t \in \M_0 \ \text{for $t\ge 0$}) = 1 \quad \text{for all $\xi \in \M_0$}, 
\end{align}
and for all $\k \in \N$, 
\begin{align} \label{:31h}
&\P_{(\x_\k,\eta)}^{[\k]} = \P_{\unl((\x_\k,\eta))} \circ (\lab_{\rm path}^{[\k]})^{-1} \quad \text{for all $(\x_\k,\eta)\in \lab^{[\k]}(\M_0)$}, 
\\ \label{:31k}
&\P_{\xi} = \P_{\lab^{[\k]}(\xi)}^{[\k]} \circ \unl^{-1} \quad \text{for all $\xi \in \M_0$}. 
\end{align}
\noindent
(ii) There exists an $S^{\N}$-valued process $\X=(X^j)_{j\in\N}$ such that
$$
\X^{[\k]}=(X^j)_{j=1}^\k
\quad \text{and} \quad
\Xi^{[\k]}= \sum_{j=\k+1}^\infty \delta_{X^j}.
$$
\end{theorem}
We give the proof of this theorem in Section \ref{SS:42}.

\vskip 3mm

\begin{remark}
From the above lemma, $X^j$, $j\in\N$ is a Dirichlet process of $(\X^{[\k]},\Xi^{[\k]})$ 
for $\k\ge j$, and hence $\X$ is a Dirichlet process of the consistent sequence of processes $\{(\X^{[\k]},\Xi^{[\k]})\}_{\k\in\N}$. 
\end{remark}

\subsection{ISDE representation}\label{SS:32}

Under condition \As[EQ], we can define $\mathsf{d}^\mu$ by
\begin{align}\notag 
\mathsf{d}^\mu(x,y,\eta)=1+ \frac{\rho^1 (y)}{\rho^1(x) }\frac{d\mu_y}{d\mu_x}(\eta),
\quad x,y\in S, \quad \eta\in\M.
\end{align}
Note that for alomost all $y$, $\mathsf{d}^\mu (\cdot, y, \cdot) \in L_{\rm loc}^1 (S\times\M,  \mu^{[1]})$.

We introduce the rate function defined as
\begin{equation}\label{:32b}
c(x,y,\xi)=
\begin{cases}
\frac{1}{2}p(x, y)\mathsf{d}^\mu(x,y,\xi\setminus x),
&\text{if $x\in\xi$}
\\
0,
&\text{if $x\notin\xi$}
\end{cases}
\end{equation}
and set
$$
a(u,r,x,\xi\setminus x)= \mathbf{1}\left(0\le r \le c(x,x +u,\xi)\right).
$$
We introduce the following measurable functions on $\R^d\times [0,\infty)\times W(S^\N)\times [0,\infty)$
\begin{align*}
a^j(u,r,\X,s)=a(u,r,X_{s-}^j, \Xi_{s-}-\delta_{X_{s-}^j}), \quad j\in\N,
\end{align*}
where $\Xi = \unl (\X)$. Then we have the following representation of the process $\X$, 
the proof of which is given in Section \ref{SS:43}.

\begin{theorem}\label{T:32a}
Suppose that the assumptions in Theorem \ref{T:31c} and \As[EQ] hold.
Let $\X=\mathfrak{l}(\Xi)$ be the labeled process on $(\Omega, \mathcal{F}, \{\P_\xi\}_{\xi\in \M})$ constructed in Theorem \ref{T:31c}.
Then, $\X$ satisfies
\begin{align}\label{ISDE}
&X_t^j=X_0^j+\int_0^t \int_S  \int_0^\infty N^j(dsdudr) \;
u a^j(u,r,\X,s),
\quad j\in\N,
\tag{ISDE}
\end{align}
where
$\bN=(N^j)_{j\in\N}$ are independent Poisson RPFs on $[0,\infty)\times \R^d \times [0,\infty)$ whose intensity measure is the Lebesgue measure $dsdudr$.
\end{theorem}

We consider (\ref{ISDE}) under the conditions that 
there exist subsets $\M_{\rm SDE}$ and $\mathfrak{H}$ of $\M_{0}$ such that $\frak{H} \subset \M_{\rm SDE}$ and $\mu(\M_{\rm SDE})=\mu(\mathfrak{H})=1$ with
\begin{align} \label{:32e}
&\X \in W(\mathbf{S}_{\rm SDE}),
\\ \label{:32f}
&\X_0 \in \mathbf{H}, 
\end{align}
where 
$\mathbf{S}_{\rm SDE}=\mathfrak{u}^{-1}(\M_{\rm SDE})$ and $\mathbf{H}= \mathfrak{u}^{-1}(\mathfrak{H})$. 

We introduce the increasing sub-$\sigma$-fields of $\Borel(W(S^{\N}))$ defined by
$\Borel_t=\sigma[\w_s, 0\le s\le t], \w\in W(S^{\N})$, $0\le t < \infty$.

\begin{definition}\label{D:32a}
A stochastic process $\X=(X^j)_{j\in\N}$ 
is called a weak solution of (\ref{ISDE}) with (\ref{:32e})-(\ref{:32f})
if $(\X, \bN)$ defined on a probability space $(\Omega, \mathcal{F}, P)$ with a reference family $\{\mathcal{F}_t\}_{t\ge 0}$
satisfies

\noindent (i)  $\X=(X^j)_{j\in\N}$ is $\mathcal{F}_t/\Borel_t$-measurable for each $0\le t<\infty$.

\noindent (ii) $\bN=\{N^j\}_{j\in\N}$ is an i.i.d. sequence of  $\{\mathcal{F}_t\}$-Poisson RPFs on $[0,\infty)\times \R^d\times [0,\infty)$ whose intensity measures are the Lebesgue measure $dsdudr$.

\noindent (iii) $a^j(u,r,\X,s)$, $j\in\N$ are predictable, and
\begin{align}\notag 
&E\Big[
\int_0^t \int_{U_1}  \int_0^\infty N^j (dsdudr) \;|u|^2 a^j(u,r,\X,s)
\Big]<\infty,
\\ \notag 
&E\Big[
\int_0^t \int_{U_1^c}  \int_0^\infty N^j(dsdudr) \; a^j(u,r,\X,s)
\Big]<\infty.
\end{align}

\noindent (iv) With probability $1$, the process $(\X, \bN)$ satisfies (\ref{ISDE}) with (\ref{:32e})--(\ref{:32f}).
\vskip 3mm
\noindent
Moreover, $\X=(X^j)_{j\in\N}$ 
is called a weak solution of (\ref{ISDE}) starting from $\x\in \mathbf{H}$ if $\X_0=\x$ holds.
\end{definition}

From Theorems \ref{T:32a} and \ref{T:31c}(i), if we take $\mathbf{H}=\mathbf{S}_{\rm SDE}\subset \mathfrak{u}^{-1}(\M_{\mu}\cap \M_{\rm s.i.})$, then we have the following result. Here $\M_{\mu}$ is given in Lemma \ref{L:2a}. 

\begin{corollary}\label{C:32a}
Suppose that the assumptions in Theorem \ref{T:32a} hold.
Let $\X=\mathfrak{l}(\Xi)$ be the labeled process on $(\Omega, \mathcal{F}, \{\P_\xi\}_{\xi\in \M})$ constructed in Theorem \ref{T:31c}.
Then, for $\mu\circ \lab^{-1}$ a.s. $\x=\mathfrak{l}(\xi)$, $\X$ is a weak solution of (\ref{ISDE}) with (\ref{:32e})-(\ref{:32f}).
\end{corollary}

\begin{remark}\label{R:32a}
In \cite{o.rm, o.rm2} a diffusion process describing a system of infinitely many Brownian particles with interactions was constructed using the Dirichlet form technique. In \cite{o.isde}, the following ISDE representation was given:
\begin{equation}\notag  
dX^j_t=dB^j_t+\frac{1}{2}
\mathrm{d}^\mu(X_t^j, \sum_{i\in\N, i\not=j}\delta_{X_t^i})dt, \quad j\in\mathbb{N}.
\end{equation}
Here, $\mathrm{d}^\mu \in L_{\rm loc}^1 (S\times \M, \mu^{[1]})$ is the logarithmic derivative of $\mu$, which is defined as
\begin{align}\notag 
\int_{S\times \M} \mathrm{d}^\mu(x,\eta)\varphi (x,\eta)\mu^{[1]}(dx d\eta)
=- \int_{S\times \M}\nabla_x \varphi(x,\eta)\mu^{[1]}(dxd\eta), \; \varphi \in \mathcal{D}^{1}_{\circ}.
\end{align}
In the case where $\mu$ is a canonical Gibbs measure associated with the potentials $\Phi$ and $\Psi$, its logarithmic derivative is represented as
$$
\mathrm{d}^\mu (x,\sum_{j\in\N}\delta_{y_j})
=-\nabla\Phi(x)-\sum_{j\in\N}\nabla\Psi(x,y_j).
$$
In \cite{BDO}, a relation between $\mathrm{d}^\mu(x,\eta)$ and $\mathsf{d}^\mu (x,y ; \eta)$ was discussed and it was shown that 
\begin{align*}
&\mathrm{d}^\mu(x,\eta)=\nabla_y\mathsf{d}^\mu (x,y ; \eta)\Big|_{y=x}
\quad \mbox{ for } x\in \R^d, \eta\in \M
\end{align*}
holds under Assumption 1 \cite[Proposition 2.2]{BDO}.
\end{remark}

\subsection{Existence and uniqueness of strong solutions}\label{SS:33}

We examine solutions of (\ref{ISDE}). First, we define several notions of the uniqueness of solutions of (\ref{ISDE}). 
We can regard an i.i.d. sequence $\bN=\{N^j\}_{j\in\N}$ of $\{\mathcal{F}_t\}$-Poisson RPFs on $[0,\infty)\times \R^d\times [0,\infty)$ as a Poisson RPF on $([0,\infty)\times \R^d\times [0,\infty))^{\N}$.
Let $\X$ be a weak solution of (\ref{ISDE}) with the $\{\mathcal{F}_t\}$-Poisson RPF $\bN$ on $([0,\infty)\times \R^d\times [0,\infty))^{\N}$.
Set 
$$
\mathcal{M}=\M([0,\infty)\times \R^d\times [0,\infty)),
\quad
\mathcal{M}_t=\M([0,t]\times \R^d\times [0,\infty)).
$$
We can regard the $\sigma$-field $\Borel(\mathcal{M}_t^\N)$ as a sub-$\sigma$-field of 
$\mathcal{B}(\mathcal{M}^\N)$ and denote this by $\mathcal{B}_t(\mathcal{M}^\N)$. 
We denote the completions of the topological Borel fields $\mathcal{B}(\mathcal{M}^\N)$ and $\mathcal{B}_t(\mathcal{M}^\N)$ with respect to the distribution of $\bN$ by $\overline{ \mathcal{B}(\mathcal{M}^\N) }$ and $\overline{\mathcal{B}_t(\mathcal{M}^\N)}$.

\begin{definition}[uniqueness in law]\label{D:33a}
The uniqueness in law of weak solutions of (\ref{ISDE}) states that whenever $\mathbf{X}$  and $\mathbf{X}'$ are two weak solutions whose initial distributions coincide, then the laws of $\mathbf{X}$  and $\mathbf{X}'$ coincide.
\end{definition}

\begin{definition}[pathwise uniqueness]\label{D:33b}
The {\it pathwise uniqueness} of solutions to (\ref{ISDE}) with (\ref{:32e})--(\ref{:32f}) states that any solutions $\X$ and $\X'$ with (\ref{:32e})--(\ref{:32f}) on the same probability space with the same $\{\mathcal{F}_t\}$-Poisson random field $\bN$ on $([0,\infty)\times \R^d\times [0,\infty))^{\N}$ such that $\X_0=\X_0'$ satisfy $\X=\X'$ a.s.
\end{definition}

\begin{definition}[strong solution starting at $\x$]\label{D:33c}
A weak solution $\X$ of (\ref{ISDE}) with the i.i.d. sequence  $\bN$ of $\{\mathcal{F}_t\}$-Poisson RPFs is called a {\it strong solution} of (\ref{ISDE}) starting at $\x$ 
if $\X_0=\x$ a.s. and if there exists a function $F_\x : \mathcal{M}^\N\to W(S^{\N})$  such that $F_\x$ is 
$\overline{ \mathcal{B}(\mathcal{M}^\N) }/\mathcal{B}(W(S^{\N}))$-measurable,  
$\overline{ \mathcal{B}_t(\mathcal{M}^\N) }/\mathcal{B}_t(W(S^{\N}))$-measurable, and satisfies 
$\X=F_\x(\bN)$ a.s.
\end{definition}

\begin{definition}[unique strong solution starting at $\x$]\label{D:33d}
We say that (\ref{ISDE})  has a unique strong solution starting at $\x$ if there exists a 
$\overline{ \mathcal{B}(\mathcal{M}^\N) }/\mathcal{B}(W(S^{\N}))$-measurable function $F_\x : \mathcal{M}^\N \to W(S^{\N})$ such that
\\
(i) for any weak solution $(\hat{\X}, \hat{\bN})$ of (\ref{ISDE}) starting at $\x$
$\hat{X} = F_\x (\hat{\bN})$ a.s.

\noindent (ii) for any $\{\mathcal{F}_t\}$-Poisson RPF $\bN$ on $([0,\infty)\times \R^d\times [0,\infty))^\N$,
the process $F_\x (\bN)$ is a strong solution of (\ref{ISDE}) starting at $\x$. 
We call $F_\x$ a unique strong solution starting at $\x$.
\end{definition}

For a given condition \As[C] we give the following definitions.

\begin{definition}[unique strong solution under constraint]\label{D:33e}
For a condition \As[C], 
we say that (\ref{ISDE})  has a unique strong solution starting at $\x$ under constraint \As[C] if there exists a 
$\overline{ \mathcal{B}(\mathcal{M}^\N) }/\mathcal{B}(W(S^{\N}))$-measurable function $F_\x : \mathcal{M}^\N \to W(S^{\N})$ such that
\\
(i) for any weak solution $(\hat{\X}, \hat{\bN})$ of (\ref{ISDE}) starting at $\x$ satisfying \As[C],
$\hat{X} = F_\x (\hat{\bN})$ a.s.

\noindent
(ii) for any $\{\mathcal{F}_t\}$-Poisson RPF  $\bN$ 
on $([0,\infty)\times \R^d\times [0,\infty))^{\N}$,
the process $F_\x (\bN)$ is a strong solution of (\ref{ISDE}) starting at $\x$ satisfying \As[C]. 
We call $F_\x$ a unique strong solution starting at $\x$ under constraint \As[C].
\end{definition}

We introduce the following condition for a family of strong solutions $\{F_\x\}$ of (\ref{ISDE}) given for $P\circ \X_0^{-1}$-a.s. $\x$.

\vskip 3mm

\noindent \As[MF] \quad
$P(F_\x (\bN)\in A)$ is $\overline{\Borel(S^\N)}^{P\circ \X_0^{-1}}$-measurable in $\x$ for any $A \in \Borel(W(S^{N}))$.

\vskip 3mm
For a family of strong solutions $F_\x$ satisfying \As[MF] we set
\begin{equation}\notag 
P_{\{F_\x\}}= \int P(F_\x (\bN)\in \cdot) P\circ \X_0^{-1}(d\x).
\end{equation}

\begin{definition}\label{D:33f}
For a condition \As[C], we say that (\ref{ISDE}) has a family of unique strong solution $\{F_\x\}$ starting at $\x$  for $P\circ \X_0^{-1}$-a.s. $\x$, under the constraints of \As[MF] and \As[C] if $\{F_\x\}$ satisfies \As[MF] and \As[C]. Furthermore, (i) and (ii) must be satisfied:
\\
(i) for any weak solution $(\hat{\X}, \hat{\bN})$ under $\hat{P}$ of (\ref{ISDE}) with
$\hat{P}\circ \hat{\X}_0^{-1} \prec  P\circ \X_0^{-1}$
satisfying \As[C], it holds that, for $\hat{P} \circ \hat{\X}_0^{-1}$-a.s. $\x$, 
$\hat{\X}= F_\x(\hat{\bN})\quad \text{$\hat{P}_\x$- a.s.}$,
where $\hat{P}_\x= \hat{P}(\cdot | \hat{\X}_0=\x )$.
\\
(ii) for any $\{\mathcal{F}_t\}$-Poisson RPF $\bN$ on $([0,\infty)\times \R^d\times [0,\infty))^\N$,
$F_\x(\bN)$ is a strong solution of (\ref{ISDE}) satisfying \As[C] starting at $\x$ for $P\circ \X_0^{-1}$-a.s $\x$.
\end{definition}

\noindent
\begin{remark}\label{R:33a}
Similar to Definitions \ref{D:33e}--\ref{D:33f},  we can introduce the notion of a constrained version of uniqueness in Definitions \ref{D:33a}--\ref{D:33b}.
\end{remark}

Let $(\X,\bN)$ be a weak solution of (\ref{ISDE}). 
We introduce an infinite sequence of finite-dimensional SDEs associated with $(\X,\bN)$. 
We set 
$$
\X^\m=(X^1,X^2,\dots,X^\m),
\quad
\X^{\m*}=(X^{\m+1}, X^{\m+2},\dots).
$$
For each $\m \in \N$ we introduce the $\m d$-dimensional SDE of $\Y^{\m}$ for $(\X,\bN)$ starting from $\x_\m=(x_1,x_2,\dots x_{\m})$ defined on $(\Omega, \mathcal{F}, \P_\xi)$ such that
\begin{align}
&\label{:33b}
Y_t^{\m,j}=Y_0^j+\int_0^t \int_S  \int_0^\infty N^j(dsdudr) \;
u a^j(u,r,(\Y^\m,\X^{\m*}),s),
\quad 1 \leq j \leq \m, 
\\ \label{:33c}
&\Y_0^\m=\x_\m, 
\\ \label{:33d}
&\Y_t^\m \in \mathbf{S}_{\rm SDE}^\m (\X_t) \; \mbox{ for any $t>0$,}
\end{align}
where 
$$
\mathbf{S}_{\rm SDE}^\m(\x)=\{\y_\m\in S^\m :  
\mathfrak{u}(\y_\m)+\mathfrak{u}(\x^{\m*})\in \mathfrak{u}(\mathbf{S}_{\rm SDE})  \}
$$
The sequence of SDEs (\ref{:33b})--(\ref{:33d}) is called an infinite system of finite-dimensional SDEs associated with a solution $(\X,\bN)$ of (\ref{ISDE}).
We introduce the notion of a strong solution of SDE (\ref{:33b})--(\ref{:33d}) for $(\X, \bN)$.
Set $\mathcal{C}^\m$ and $\mathcal{C}_t^\m$ as the completions of 
$\Borel(\mathcal{M}^\m \times W(S^\N))$ and $\Borel_t(\mathcal{M}^\m \times W(S^\N))=\Borel_t(\mathcal{M}^\m)\otimes \Borel_t(W(S^\N))$
with respect to the distribution of $(\bN^\m=\{N^j\}_{j=1}^\m, \X^{\m*})$, respectively.

\begin{definition}\label{D:33g}
(i) $(\Y^\m, \bN^\m, \X^{\m *})$ is called a {\it strong solution} of (\ref{:33b})-(\ref{:33d}) defined on $(\Omega, \mathcal{F}, P, \{\mathcal{F}_t\})$ if it satisfies (\ref{:33b})--(\ref{:33d}), and there exists a function $F_{\x_\m}^\m : \mathcal{M}^\m\times W(S^\N) \to W(S^{\m})$ such that $F_{\x_\m}^\m$ is
$\mathcal{C}^\m$-measurable, $\mathcal{C}_t^\m /\Borel_t(W(S^{\m}))$ measurable, and $\Y^\m=F_{\x_\m}^\m(\bN^\m, \X^{m*})$.

\noindent (ii) We say that SDE (\ref{:33b})-(\ref{:33d}) for $(\X, \bN)$ has a {\it unique strong solution}  if
any (weak) solution $(\hat{\Y}^\m, \bN^\m, \X^{\m *})$ satisfies $\hat{\Y}^\m=F_{\x_\m}^\m(\bN^\m, \X^{\m*})$.

\noindent (iii)
We say that the {\it pathwise uniqueness} of solutions (\ref{:33b})--(\ref{:33d}) holds
if whenever $\Y^{\m}$ and $\hat{\Y}^{\m}$ are two solutions defined on the same probability space $(\Omega, \mathcal{F}, \P_\xi)$ with the same $\{\mathcal{F}_t\}$-Poisson RPF $\bN=\{N^j\}_{j\in\N}$ and the process $\X$ starting from $\x$, then
$P_\x(\Y_t^\m = \hat{\Y}_t^\m, \mbox{ for all $t\ge 0$})=1$.
\end{definition}

We now state the following condition.

\vskip 3mm

\noindent \As[IFC] For each $\m\in\N$,
(\ref{:33b})--(\ref{:33d}) has a unique strong solution $\Y^\m= F_{\x_\m}^\m (\bN^\m, \X^{\m *})$.

\vskip 3mm 
\noindent
We assume that \As[IFC] holds, and set
\begin{align*}
& \mathbb{F}_\x^\m(\bN, \X)=(\mathbb{F}_\x^{\m,j}(\bN, \X))_{j\in\N}=
(F_{\x_\m}^\m (\bN^\m, \X^{\m *}), \X^{\m*})=(\Y^\m,\X^{\m*}).
\end{align*}

\vskip 3mm

Then, the following lemma can be readily derived from \As[IFC].
The tag \As[IFC]  comes from the property in Lemma \ref{L:33a}(i) that implies $(\Y^\m, \bN^\m, \X^{\m *}), \m\in\N$ is an {\it infinite system of finite-dimensional SDEs with consistency}. 

\begin{lemma}(\cite[Lemma 4.2]{o-t.tail}) \label{L:33a}
Assume that \As[IFC] is satisfied. Then, the following hold.

\noindent (i)
The sequence $\{\mathbb{F}_\x^\m \}_{\m\in\N}$ is consistent in the sense that
for $P$-a.s.
\begin{align*}
\mathbb{F}_\x^{\m,j}(\bN, \X)=\mathbb{F}_\x^{\m+\n,j}(\bN, \X)
\quad \mbox{for all $1\le j\le \m$, \; $\m,\n\in\N$}
\end{align*}
\noindent (ii)
There exists a map $F_{\x}^\infty :  \mathcal{M}^\N \times W(S^\N) \to W(S^\N)$ such that for any $j\in\mathbb{N}$ and $P$-a.s. $(\X, \bN)$,
$$
\displaystyle{\lim_{\m\to\infty}}\mathbb{F}_{\x}^{\m,j}(\bN, \X)= F_{\x}^{\infty,j}(\bN, \X).
$$
\noindent (iii)
$(\X,\bN)$ is a fixed point of $F_{\x}^\infty$ in the sense that, for $P_\x$-a.s.
\begin{align*}
(\X, \bN)= (F_{\x}^\infty (\bN, \X), \bN).
\end{align*}
\end{lemma}

 The tail $\sigma$-field $\mathcal{T}(\M)$ on $\M$ is given by 
 \begin{align} \label{:33g}
 \mathcal{T}(\M) = \bigcap_{r\in\N}\sigma(\pi_r^c).
 \end{align}
 We state the following condition.
 
\vskip 3mm

\noindent
\As[TT]
The tail $\sigma$-field $\mathcal{T}(\M)$ is $\mu$-trivial, that is, $\mu(\mathfrak{A})\in \{0,1\}$ for $\mathfrak{A}\in \mathcal{T}(\M)$. 

\vskip 3mm

Let $\mu$ be a probability measure on $\M$ and let $\Xi_t$ be an $\M$-valued process.
We say that $\Xi_t$ satisfies the $\mu$-absolute continuity condition if

\vskip 3mm

\noindent \As[AC]\quad
$\mu\circ\Xi_t^{-1}$ is absolutely continuous with respect to $\mu$ for any $t>0$.

\vskip 3mm
\noindent
We also say that an $S^{\N}$-valued process $\X_t$ satisfies the $\mu$-absolute continuity condition if
$\frak{u}(\X_t)$ satisfies the $\mu$-absolute continuity condition. 

By applying a modification of \cite[Theorem 11.1]{o-t.tail}, we have the following.

\begin{theorem}\label{T:33a}
Suppose that the assumptions in Theorem \ref{T:31c} hold.
Let $\X=\mathfrak{l}_{\rm path}(\Xi)$ be the labeled process on $(\Omega, \mathcal{F}, \{\P_\xi\}_{\xi\in \M})$ constructed in Theorem \ref{T:31c}.
Assume that \As[TT] holds for $\mu$ and \As[IFC] is satisfied.
Then $(\X,\bN)$ is a strong solution of (\ref{ISDE}) with (\ref{:32e})-(\ref{:32f}) starting at $\x$ 
for $\mu\circ\mathfrak{l} ^{-1}$-a.s.  $\x$ and satisfies \As[MF], \As[AC] for $\mu$, \As[SIN] and \As[NBJ].
Moreover, (\ref{ISDE}) with (\ref{:32e})-(\ref{:32f}) has a family of unique strong solutions $\{F_\x\}$ starting at $\x$ 
for $\mu\circ\mathfrak{l} ^{-1}$-a.s.  $\x$ under the constraints of \As[MF], \As[IFC], \As[AC] for $\mu$, \As[SIN], and \As[NBJ].
\end{theorem}

The following corollary is a direct consequence of the above theorem.
\begin{corollary}\label{C:33a}
Assume that \As[TT] holds for $\mu$.

\noindent
(i) The uniqueness in law of weak solutions of (\ref{ISDE}) with (\ref{:32e})--(\ref{:32f}) holds
under the constraints of \As[MF], \As[IFC], \As[AC] for $\mu$, \As[SIN], and \As[NBJ].

\noindent
(ii) The pathwise uniqueness of weak solutions of (\ref{ISDE}) with (\ref{:32e})--(\ref{:32f}) holds
under the constraints of \As[MF], \As[IFC], \As[AC] for $\mu$, \As[SIN], and \As[NBJ].
\end{corollary}

All determinantal RPFs (DRPFs) on continuous spaces are tail-trivial \cite{bqs,ly.18,o-o.tail}. These results are a generalization of the properties of DRPFs 
on discrete spaces \cite{BLPS,ST03,ly.03}. 

\vskip 3mm

We relax \As[TT] for $\mu$ through the tail decomposition of $\mu$.
Because  $\M$ is a Polish space, for a probability measure $\mu$, 
there exists a regular conditional probability measure using the tail $\sigma$-field $\mathcal{T}(\M)$ defined by
(\ref{:33g}). 
We set $\mu_{\rm Tail}^{\zeta}(\cdot)=\mu(\cdot | \mathcal{T}(\M))(\zeta) $.
Then, the following tail decomposition holds:
\begin{align}\label{:33h}
\mu (\cdot)= \int_{\M} \mu_{\rm Tail}^{\zeta}(\cdot)\mu(d\zeta).
\end{align}
When Theorem \ref{T:33a} is applied by replacing $\mu$ in $\mu_{\rm Tail}^{\zeta}$,
we can remove condition \As[TT].
From \cite[Lemma 14.2]{o-t.tail}, there exists a Borel set $\tilde{\mathfrak{H}}\subset \M$ such that $\mu(\tilde{\mathfrak{H}})=1$, and for any $\zeta\in\tilde{\mathfrak{H}}$
\begin{align}
& \mu_{\rm Tail}^{\zeta}(\mathfrak{A})\in \{0,1\} \quad \text{ for all $\mathfrak{A}\in \mathcal{T}(\M)$},
\label{:33k}
\\
& \mu_{\rm Tail}^{\zeta}(\{\xi \in \M : \mu_{\rm Tail}^{\zeta}=\mu_{\rm Tail}^{\xi}\})=1,
\label{:33l}
\\
&\text{$\mu_{\rm Tail}^{\zeta}$ and $\mu_{\rm Tail}^{\xi}$ are mutually singular on $\mathcal{T}(\M)$ if $\mu_{\rm Tail}^{\zeta}\not=\mu_{\rm Tail}^{\xi}$}.
\label{:33m}
\end{align}

\begin{theorem}\label{T:33b}
Suppose that \As[QG], \As[EQ], \As[A.2.$\k$], $\k \in \{0\}\cup\N $ for $\mu$ are satisfied. 
Then, for $\mu$-a.s. $\zeta$, 

\noindent (i)
 \As[QG], \As[EQ], \As[A.2.$\k$], $\k \in \{0\}\cup\N $ for $\mu^\zeta$ are satisfied.

\noindent (ii) 
There exists an unlabeled process $(\Xi_t, \{\P_\xi^\zeta\}_{\xi\in\M})$ associated with the quasi-regular Dirichlet form $(\mathcal{E}^{\mu^{\zeta}}, \mathcal{D}^{\mu^{\zeta}}, L^2(\M,\mu^{\zeta}))$. 

\noindent (iii) 
Assume that \As[SIN] holds for $(\Xi_t, \mathbb{P}_{\mu})$. The labeled process $\mathfrak{l}_{\rm path}(\Xi)$ on $(\Omega, \mathcal{F}, \P_{\mu^\zeta}^{\zeta})$ 
is a weak solution of (\ref{ISDE})  with (\ref{:32e})--(\ref{:32f}).

\noindent (iv) 
 Assume that $(\mathfrak{l}_{\rm path}(\Xi), \bN)$ on $(\Omega, \mathcal{F}, \P_\mu)$ satisfies \As[IFC]. Then ,$(\mathfrak{l}_{\rm path}(\Xi), \bN)$ is a strong solution of (\ref{ISDE})  starting at $\x$ for $\mu_{\rm Tail}^{\zeta}\circ\mathfrak{l} ^{-1}$ a.s. $\x$, 
and satisfies \As[MF], \As[AC] for $\mu_{\rm Tail}^{\zeta}$, \As[SIN], and \As[NBJ].
Moreover, 
(\ref{ISDE}) with (\ref{:32e})--(\ref{:32f}) has a strong solution $F_\x^{\zeta}$ starting at $\x$ for $\mu_{\rm Tail}^{\zeta}\circ\mathfrak{l} ^{-1}$ a.s. $\x$
under the constraints of \As[MF], \As[IFC], \As[AC] for $\mu_{\rm Tail}^{\zeta}$, \As[SIN], and \As[NBJ].
\end{theorem}

\subsection{Examples}\label{SS:34}

In this section we present some examples in which we can apply Theorems \ref{T:33a} and \ref{T:33b}.
We assume that $p$ is translation-invariant, that is, the value of $p(x, y)$ depends only on $x-y$. In particular, by \As[p.1], \As[p.2], and \As[A.2], we can apply our theorems to interacting $\alpha$-stable systems for $\alpha \in (\kappa, 2)$, where $\kappa$ is a growth order of $\rho^1$ in \As[A.2]. Recall that interacting $\alpha$-stable systems are processes associated with $p(x, y) = |x-y|^{-d-\alpha}$.

The examples of the probability measure $\mu$ below satisfy conditions \As[QG], \As[EQ], \As[A.2.$\k$], $\k\in\N\cup \{0\}$.
In fact, quasi-Gibbs measures include canonical Gibbs measures by definition. Moreover, it has been shown that Sine$_\beta$ RPFs ($\beta=1,2,4$),
Bessel$_{\alpha,\beta}$ RPFs ($\alpha \ge 1, \beta=2$),
Airy$_\beta$ RPFs ($\beta=1,2,4)$,
Ginibre RPFs are not canonical Gibbs measures, but quasi-Gibbs measures in \cite{h-o.bes,o.rm,o.rm2,o-t.airy}. In addition, the condition \As[EQ] can be shown(see, e.g., \cite{BDO,OsaShi}).
Condition \As[A.1.$\k$], $\k\in \N\cup \{0\}$ is derived from Theorem \ref{T:31b}. 

We can show that the associated processes $\Xi$ satisfy \As[NCL], \As[NEX], \As[NBJ], and \As[IFC] by checking the sufficient conditions in Subsections 5.1--5.4, respectively.

\begin{example} (Canonical Gibbs measure)
Let $\mu$ be a canonical Gibbs measure with a self-potential $\Phi$ and Ruelle's class pair-potential $\Psi$ \cite{Ru}. 
We assume that $d\ge 2$ or $d=1$ satisfying (\ref{:51n}).
Moreover, we assume that, for any $r\in\N$, there exist positive constants $\Ct\label{Psi2}$ and $\Ct\label{Psi3}$ such that
\begin{align*}
|\nabla\Psi(x)|, \; |\nabla^2\Psi(x)| \le \frac{ \cref{Psi2}}{(1+|x|)^{\cref{Psi3}}}, 
\quad \text{ for any $x$ with $|x|\ge 1/r$}.
\end{align*}
Then, $c(x, y, \xi)$, $\xi\in \Msi$, is described as
$$
c(x, y, \xi)=\frac{1}{2}p(x,y)\Big( 1+
\exp\Big\{ -\Phi (y)+\Phi (x)-\sum_{w\in \xi\setminus x} \left\{ \Psi(y, w) - \Psi(x, w) \right\} \Big\}\Big).
$$
In particular, when $\Phi=0$,
$$
c(x,y,\xi)=\frac{1}{2}p(x, y) \Big\{ 1+ 
\prod_{w\in\xi\setminus x}\frac{\exp(\Psi(w, x))}{\exp(\Psi(w, y))}
\Big\}. 
$$
The following are two examples of long-range interaction potentials. 

\noindent (1) (Lennard-Jones 6-12 potential)
$$
\Psi(x, y)=|x-y|^{-12}-|x-y|^{-6}, \quad x,y \in \R^3.
$$
 
\noindent (2) (Riesz potential) \quad Let $a>d$.
$$
\Psi(x, y)=|x-y|^{-a}, \quad x,y \in \R^d.
$$  

\end{example}

\begin{example}(RPFs related to random matrix theory)
Let $\mu$ be one of the following (quaternion) DRPFs:
Sine$_\beta$ RPF ($\beta=1,2,4$),
Bessel$_{\alpha,\beta}$ RPF ($\alpha \ge 1, \beta=2$),
Airy$_\beta$ RPF ($\beta=1,2,4)$,
Ginibre RPF \cite{forrester, Meh04}.
These are quasi-Gibbs measures with interaction potentials given by
$$
\Psi(x,y)=-\beta\log |x-y|, \quad x,y\in\R^d, \; d=1,2.
$$
The following are explicit forms of the rate function $c$ for $\mu$.

\vskip 3mm

\noindent (1) 
Sine$_\beta$ RPF $\mu_{\sin ,\beta}$ ($\beta=1,2,4$, $S=\R$): 

\noindent This is a $(0, -\beta\log |x-y|)$-quasi-Gibbs measure \cite{o.isde}, and for $\xi \in \Msi$
$$
c(x, y, \xi) = \frac{1}{2}p(x,y)\Big(1+
\lim_{r\to\infty}
\prod_{w\in \pi_r(\xi\setminus x)}\frac{|y-w|^\beta}{|x-w|^\beta}\Big).
$$
Condition \As[A.2.$\k$] is satisfied with $\kappa =0$. 
For $\beta=2$ \As[TT] is satisfied.

\noindent (2)
Bessel RPF $\mu_{{\rm Bessel},\alpha,2}$ ($\alpha \in [1,\infty), S=[0,\infty)$):

\noindent This is a $(-\alpha \log |x|, -\beta\log |x-y|)$-quasi-Gibbs measure \cite{{h-o.bes}} and for $\xi \in \Msi$
$$
c(x, y, \xi) = \frac{1}{2}p(x,y)\Big(1+
\left|\frac{y}{x}\right|^\alpha \lim_{r\to\infty}
\prod_{w\in \pi_r(\xi\setminus x)}\frac{|y-w|^\beta}{|x-w|^\beta}\Big).
$$
Condition \As[A.2.$\k$] is satisfied with $\kappa =0$ and \As[TT] is satisfied.

\noindent (3)
Airy RPF $\mu_{{\rm Ai}, \beta}$ ($\beta=1,2,4$, $S=\R$):

\noindent This is a quasi-Gibbs measure whose logarithmic derivative is given by 
$$
\mathrm{d}^{\mu_{{\rm Ai}, \xi}}(x,\eta)= 
 \beta \lim_{r\to\infty}\Big\{
 \sum_{w\in \pi_r(\xi\setminus x)}
\frac{1}{x-w}- \int_{|u|<r}\frac{\hat{\rho}(u)}{-u}du
\Big\},
$$
where $\hat{\rho}(x)=\frac{\sqrt{-x}}{\pi}\mathbf{1}_{(-\infty,0)}(x)$ \cite[Theorem 5.7]{o-t.airy}.
For $\xi \in \Msi$
$$
c(x, y, \xi) = \frac{1}{2}p(x,y)\Big\{1+
\lim_{r\to\infty}\exp\Big(\beta (x-y)\int_{|u|<r}\frac{\hat{\rho}(u)}{-u}du \Big)
\prod_{w\in \pi_r(\xi\setminus x)}\frac{|y-w|^\beta}{|x-w|^\beta}\Big\}.
$$
Condition \As[A.2.$\k$] is satisfied with $\kappa =\frac{1}{2}$. For $\beta=2$ \As[TT] is satisfied.

\noindent (4)
Ginibre RPF ($S=\R^2)$:
\\
\noindent This is a $(0, -2\log |x-y|)$-quasi-Gibbs measure \cite{o.rm}, and for $\xi \in \Msi$, 
\begin{equation*} 
c(x, y, \xi) = \frac{1}{2}p(x,y)\Big( 1+ 
\lim_{r \to \infty} \prod_{w\in \pi_r(\xi\setminus x)} \frac{|y-w|^2}{|x-w|^2}\Big). 
\end{equation*}
Condition \As[A.2.$\k$] is satisfied with $\kappa =0$ and \As[TT] is satisfied.
\end{example}

\begin{remark}\label{R:34a}
For an infinite-particle system of interacting Brownian motions, the existence and uniqueness of strong solutions have already been studied for the RPFs in the above examples
\cite{o-t.tail, h-o.bes, o-t.airy}.
\end{remark}

\section{Proofs of Theorems}\label{S:4}
\subsection{Proofs of Theorems \ref{T:31a} and \ref{T:31b}} \label{SS:41}

Let $\bd{A}=\{ \bd{a} = \{ a_r \}_{r \in \N}; a_r \in \N, a_r \leq a_{r+1} \ \text{for all $r \in \N$} \}$. We set $\1 = \{ 1 \}_{r \in \N}$. For $\bd{a}= \{ a_r \}_{r \in \N} \in \bd{A}$, let $\M[\bd{a}]= \{ \xi \in \M; \xi(\Qr) \leq a_r \ \text{for all $r \in \N$} \}$. Then, $\M[\bd{a}]$ is a compact set. 
Suppose that $\bd{a}, \bd{b} \in \bd{A}$ and $c \in \R$.  We set $\bd{a}^+ = \{ a_{r+1} \}_{r \in \N}$, $c\bd{a} = \{ ca_r \}_{r \in \N}$, and $\bd{a} + \bd{b} = \{ a_r + b_r \}_{r \in \N}$.  
We set $\M_{\bm{a}}^{\bm{b}} = \M[\bd{b}] \setminus \M[\bd{a}]$ for $\bd{a}, \bd{b} \in \bd{A}$. 
We introduce the function $\chia$ defined by  
\begin{align}\notag 
\chi[\bd{a}](\xi) = \mathsf{v} \circ d_{\bm{a}}(\xi), \quad d_{\bm{a}}(\xi) = \sum_{r=1}^{\infty} \sum_{j \in J_{r, \xi}} \frac{(2^r-|x_j|) \wedge 2^{r-1}}{2^{r-1}a_r}, 
\end{align}
where $(x^j)_{j \in \N}$ is a sequence such that $|x^j| \leq |x^{j+1}|$ for all $j \in \N$,  
$\xi=\sum_{j} \delta_{x^j}$, 
\begin{align*}
J_{r, \xi}=\{ j \in \N; j>a_r, x^j \in \Qr \}, 
\end{align*}
and $\mathsf{v} : \mathbb{R} \to [0, 1]$ is a decreasing smooth function satisfying
\begin{equation*}
\mathsf{v}(t)= \begin{cases}
1 &\text{if $t<0$}, \\
0 &\text{if $1<t$}. 
\end{cases}
\end{equation*}
Take $\beta\in (\kappa, \alpha)$. In this section we assume that $\bd{a}_n$ is given by
\begin{equation}\label{:41b}
\bd{a}_n = \{ a_{n, r} \}_{r \in \N} = \{ n2^{(d+\beta)r} \}_{r \in \N},
\end{equation}
where $\alpha$ and $\kappa$ are the positive constants in  \As[p.1] and \As[A.2.$\k$], respectively. 

\begin{lemma} \label{L:41a}
Let $\k\in\N$. Assume that \As[A.2.$\k$] holds. 
Then we have 
\begin{equation}\notag 
\mu_{\x_\k} \Big( \bigcup_{n \in \N} \M [\bd{a}_n] \Big) = 1 \quad \text{for a.e. $\x_\k \in S^\k$. }  
\end{equation}
\end{lemma}
\begin{proof}
From \As[p.1] and \As[A.2.$\k$]
\begin{align}\notag 
&E^{\mu_{\x_\k}}[\int_{ U_1^c}\xi(dx) |x|^{-(d+\beta)}]=\int_{U_1^c}  |y|^{-(d+\beta)} \rho_{\x_\k}^1(y) dy < \infty.
\end{align}
By Fubini's theorem, there exists $M=M(\xi)<\infty$ $\mu_{\x_\k}$-a.s. $\xi$ such that
\begin{align*}
\int_{U_1^c}\xi(dx) |x|^{-(d+\beta)} \le M(\xi), \quad \text{$\mu_{\x_\k}$-a.s. $\xi$.}
\end{align*}
Hence for $\mu_{\x_\k}$-a.s. $\xi$
\begin{align*}
\frac{1}{2^{(d+\beta)r}}\int_{ U_{2^r}}\xi(dx)  \le M(\xi)+\xi(U_1), \quad \text{for all $r\in\N$}
\end{align*}
and 
\begin{align*}
\frac{\xi( U_{2^r})}{2^{(d+\beta)r}}  \le M(\xi)+\xi(U_1), \quad \text{ for all $r\in \N$}.
\end{align*}
This proves the lemma.
\end{proof}

\begin{lemma} \label{L:41b}
Let $\k\in\N$.
Assume that \As[A.1.$\k$] holds.
 Let $\bd{a}_n$ be given by (\ref{:41b}). Then, we have $\varphi \otimes \chian f \in \Dmuk$ for any $\varphi \in C_0^{\infty}(S^\k)$ and $f \in \Dc$ with $\varphi \otimes f \in \Dinfmuk$. 
\end{lemma}
\begin{proof}
Let $\varphi \in C_0^{\infty}(S^\k)$ and $f \in \Dc$ with $\varphi \otimes f \in \Dinfmuk$. Then, we have
\begin{align}\notag
&\Emuk(\varphi \otimes \chian f, \varphi \otimes \chian f)
\\ \label{:41e}
&= \int_{S^\k\times\M} \mu^{[\k]}(d\x_\k d\eta) \left( \nabla^{[\k]}[\varphi, \varphi](\x_\k) \chian^2 f(\eta)^2  + \varphi(\x_\k)^2 \D [\chian f, \chian f](\eta) \right). 
\end{align}
By a direct calculation (see, e.g., the proof of \cite[Lemma 5.5]{E.19}), we have
\begin{equation}\label{:41f}
\D [g f, g f](\eta) \leq 2(\D [g, g](\eta)f(\eta)^2+g(\eta)^2\D [f, f](\eta)). 
\end{equation}
In particular, by substituting $\chian$ into $g$, we have 
\begin{equation}\label{:41g}
\D [\chian f, \chian f](\eta) \leq 2(\D [\chian , \chian](\eta)f(\eta)^2+\D [f, f](\eta)). 
\end{equation}
From \cite[Lemma 5.6]{E.19}, there exists a constant $\Ct\label{4_1}
>0$ such that 
\begin{equation} \label{:41g2}
\D[\chian , \chian ](\eta) \leq \cref{4_1} \quad \text{for any $\eta \in \M_{\bm{a}_n-\1}^{2(\bm{a}_n)^++\1}$. } 
\end{equation}
In addition, by a simple observation, we have
\begin{align}\label{:41g3}
\D[\chian , \chian ](\eta) = 0, \text{ 
for $\eta \notin \M_{\bm{a}_n-\1}^{2(\bm{a}_n)^++\1}$.}
\end{align}
From (\ref{:41g})--(\ref{:41g3}) along with the fact that $\chian(\eta) \leq 1$, we obtain
\begin{align}\notag 
&\int_{S^\k\times\M} \mu^{[\k]}(d\x_\k d\eta) \left( \nabla^{[\k]}[\varphi, \varphi](\x_\k)\chian^2 f(\eta)^2 + \varphi(\x_\k)^2 \D [\chian f, \chian f](\eta) \right). 
\\ \notag 
&\le 2(\cref{4_1}+1)\int_{S^\k\times\M} \mu^{[\k]}(d\x_\k d\eta)
\left\{  \nabla^{[\k]}[\varphi, \varphi](\x_\k)f(\eta)^2 + \D [f, f](\eta)\varphi(\x_\k)^2 + f(\eta)^2 \varphi(\x_\k)^2\right\} 
\\ \label{:41h}
&= 2(\cref{4_1}+1) \left\{ \Emuk(\varphi \otimes f, \varphi \otimes f) + \| \varphi \otimes f \|_{L^2(S^\k \times \M, \muk)}^2 \right\}.
\end{align}
Because $\varphi \otimes f \in \Dinfmuk$, we have that $\Emuk(\varphi \otimes f, \varphi \otimes f)<\infty$ and $\| \varphi \otimes f \|_{L^2(S^\k \times \M, \muk)}^2 <\infty$. Thus, \eqref{:41e} and \eqref{:41h} yields
\begin{equation*}
\Emuk(\varphi \otimes \chian f, \varphi \otimes \chian f) < \infty. 
\end{equation*}
Because $\chian$ is a decreasing limit of local smooth functions, the lemma follows from \As[A.1.$\k$].
\end{proof}

\begin{lemma} \label{L:41c}
Let $\k\in\N$.
Assume that \As[A.1.$\k$] and \As[A.2.$\k$] hold. Let $\bd{a}_n$ be given by (\ref{:41b}). Then, we have
$$ \lim_{n \to \infty}\| \varphi \otimes (1-\chian)f \|_{1}=0 $$
for any $\varphi \in C_0^{\infty}(S^\k)$ and $f \in \Dc$ with $\varphi \otimes f \in \Dinfmuk$, where $\| \cdot \|_1^2 = \Emuk(\cdot, \cdot)+\| \cdot \|_{L^2(S^\k \times \M, \muk)}^2$. 
\end{lemma}
\begin{proof}
We estimate the following quantity:
\begin{align}\notag 
&\Emuk(\varphi \otimes (1-\chian)f, \varphi \otimes (1-\chian)f) 
\\ \notag 
&= \int_{S^\k\times \M}  \mu^{[\k]}(d\x_{\k} d\eta) \big\{ \nabla^{[\k]}[\varphi, \varphi](\x_\k)\{(1-\chian)f(\eta)\}^2  
\\ \label{:41k}
&\quad\qquad\qquad + \varphi(\x_\k)^2 \D [(1-\chian)f, (1-\chian)f](\eta) \big\}. 
\end{align}
Note that $\eta \in \M[\bd{a}_n-\1]$ implies $1-\chian(\eta)=0$. Substituting $1-\chian$ into $g$ in \eqref{:41f}, we have
\begin{equation*}
\D [(1-\chian)f, (1-\chian)f](\eta) \leq 2(\D [1-\chian , 1-\chian](\eta)f(\eta)^2+\D [f, f](\eta)).
\end{equation*}
Here, we use $0 \leq 1-\chian(\eta) \leq 1$ for any $\eta$. 
From this and $\D [1-\chian , 1-\chian](\eta) = 0$ for $\eta \in \M[\bd{a}_n-\1]$, we find that
\begin{align}\label{:41l}
\text{R.H.S. of \eqref{:41k}}
&\leq \int_{S^\k\times\M[\bd{a}_n-\1]^c} \mu^{[\k]}(d\x_\k d\eta) \Big\{  \nabla^{[\k]}[\varphi, \varphi](\x_\k)f(\eta)^2  \notag \\
&\quad\quad + \varphi(\x_\k)^2 \left( \D [f, f](\eta) +f(\eta)^2 \D [1-\chian, 1-\chian](\eta) \right) \Big\}. 
\end{align}

From \cite[Lemma 5.6]{E.19}, for the constant $\cref{4_1}>0$ in the above, we have 
$$ \D[1-\chian , 1-\chian ](\eta) \leq \cref{4_1} \quad 
\text{for any $\eta \in \M_{\bm{a}_n-\1}^{2(\bm{a}_n)^++\1}$. } 
$$
Combining this with \eqref{:41l} gives
\begin{align}\notag
\text{R.H.S. of \eqref{:41l}} &\leq \cref{4_2} \int_{S^\k\times\M[\bd{a}_n-\1]^c} \mu^{[\k]}(d\x_\k d\eta)
\Big\{ \nabla^{[\k]} [\varphi, \varphi](\x_\k)f(\eta)^2 
\\ \notag 
&\qquad\qquad\qquad + \varphi(\x_\k)^2\D[f, f](\eta)+\varphi(\x_\k)^2f(\eta)^2 \Big\}, 
\end{align}
where $\Ct\label{4_2}=(\cref{4_1} \vee 1)$.
From Lemma \ref{L:41a} we have that
\begin{equation}\label{:41n}
\mu_{\x_\k}\Big( \bigcap_{n \in \N} \M[\bd{a}_n-\1]^c \Big) = 0 \quad \text{for a.e. $\x_\k \in S^{\k}$}. 
\end{equation}
Because $\varphi \otimes f \in \Dinfmuk$, \eqref{:41k}--\eqref{:41n} yield 
\begin{equation} \label{:41o}
\lim_{n \to \infty} \Emuk(\varphi \otimes (1-\chian)f, \varphi \otimes (1-\chian)f) = 0.  
\end{equation}
As $\varphi \otimes f \in L^2(S^\k \times \M, \muk)$, we have $\varphi \otimes (1-\chian)f \in L^2(S^\k \times \M, \muk)$ for any $n \in \N$. Then by Lemma \ref{L:41a} we can show that
\begin{equation} \label{:41p}
\lim_{n \to \infty}\| \varphi \otimes (1-\chian)f \|_{L^2(S^\k \times \M, \muk)} = 0 
\end{equation}
By \eqref{:41o} and \eqref{:41p}, we have $\lim_{n \to \infty}\| \varphi \otimes (1-\chian)f \|_{1} = 0$. Thus, the proof is complete.  
\end{proof}

\begin{proof}[Proof of Theorem \ref{T:31a}]
Recall that $\M[\bd{a}]$ is a compact set for any $\bd{a} \in \bd{A}$ with the vague topology. As $\supp \chian \subset \M[2(\bd{a}_n)^+]$ we see that $\chian f$ has a compact support for any $f \in \Dc$. We set 
\begin{equation*}
\mathcal{D}_{{\rm cut}}^{\k} = \text{$\{ \varphi \otimes \chian f; f \in \Dc$ and $\varphi \in C_0^{\infty}(S^\k)$ with $\varphi \otimes f \in \Dinfmuk, n \in \N \}$. }
\end{equation*}
Let $\{ K_n \}_{n=1}^{\infty}$ be the sequence of compact sets of $S^\k$ such that $S^\k = \cup_{n=1}^{\infty} K_n$. We set $\mathcal{D}^\k(n) = \{ \f \in \Dmuk; \text{$\f=0$ a.e. $(\bd{x}_\k, \xi)$ on $(K_n \times \M[2(\bd{a}_n)^+])^c$} \}$. By the definition of $\chian$ we can see that
\begin{equation}\label{:41q}
\mathcal{D}_{{\rm cut}}^{\k} \subset \bigcup_{n=1}^{\infty} \mathcal{D}^\k(n). 
\end{equation}
Additionally, from Lemma \ref{L:41c}, we have that $\mathcal{D}_{{\rm cut}}^{\k}$ is dense in $\Dinfmuk$ with respect to the norm $\| \cdot \|_1$. 
Combining this with \eqref{:41q}, we can see that $\{ K_n \times \M[2(\bd{a}_n)^+] \}_{n \in \N}$ is a compact nest. 
We can then prove Lemma \ref{T:31a} using a similar argument to that in \cite[Theorem 2.1]{E.19}. Thus, the proof is complete. 
\end{proof}

Next, we prove Theorem \ref{T:31b}. 
For $\m \geq \n$, $\ell > r$, $\k\in\N$ and $\eta\in\M$,
we introduce the measure $\muxnm$ on $\Ur^{\n} \times (U_\ell \setminus \Ur)^{\m-\n}$ determined by 
$$
\int_{\M_{r,\n} \cap \M_{\ell,\m}}f(\eta)(\mu_{\x_\k})_{\ell, \eta}^{\m}(d\zeta
)=\int_{U_{r}^{\n} \times (U_{\ell}\setminus U_{r})^{\m-\n}}f_{\ell,\eta}^{\m}(\y_{\m})
\muxnm(d\y_{\m})
$$
for any bounded measurable function $f$ on $\M$.
Here, $f_{\ell, \eta}^{\m}$ is the $U_\ell^{\m}$-representation of $f$,
 and $ (\mu_{\x_\k})_{\ell, \eta}^{\m}$ is the regular conditional probability defined by
$$
(\mu_{\x_\k})_{\ell, \eta}^{\m}(d\zeta) = \mu_{\x_\k}(\pi_\ell \in d\zeta | \pi_\ell^c (\eta)= \pi_\ell^c(\zeta), \zeta(U_\ell)=\m).
$$
Set $(\mu_{\x_\k})_{\ell}^{\m}(\cdot) = \mu_{\x_\k}(\cdot | \M_{\ell, \m})$ for $\ell, \m \in \N$. 
By (\ref{:2h}), 
$\muxnm$ has a density $\chesig_{\x_\k, r,\ell, \eta}^{\n,\m}(\y_{\m})$ with respect to $e^{-\mathcal{H}_\ell(\y_{\m}|\x_\k)}d\y_{\m}$ for $(\mu_{\x_\k})_{\ell}^{\m}$-a.s. $\eta$.
Here, $d\y_{\m}$ denotes the Lebesgue measure on $U_{\ell}^{\m}$ and 
$$ 
\mathcal{H}_{\ell}(\y_{\m}|\x_\k) = \sum_{i=1}^{\m} \Phi(y_\m^i) + \sum_{1 \leq i<j \leq \m} \Psi(y_\m^i, y_\m^j) + \sum_{i=1}^{\m} \sum_{j=1}^\k \Psi(y_\m^i, x_\k^j). 
$$ 
For $q \in \N$, set $p_q(x, y)=p(x, y)\1 (p(x, y)\le q )$. It is readily seen that
$\{ p_q \}$ is nondecreasing sequence in $q$ such that $\lim_{q\to\infty}p_q(x, y) = p(x, y)$ for a.e. $x, y \in S$. 
We set
\begin{align*}
&\Gamma_q(\x_\k) = \1 \Big( q^{-1} \le \rho^\k(\x_\k) \le q \Big) 
\\
&\Gamma_q(z| \x_\k, \eta) = \prod_{j=1}^\k \1 \Big( q^{-1} \leq \frac{\rho^\k(\x_\k)}{\rho^\k(\x_\k^{j,z})} \frac{d\mu_{\x_\k}}{d\mu_{\x_\k^{j,z}}}(\eta) \leq q \Big) 
\\
&\nabla_{q, \eta}^{[\k]}[\varphi, \psi](\x_\k)
= \Gamma_q(\x_\k) \frac{1}{2} \sum_{j=1}^{\k} \int_{U_{\ell}} \Gamma_q(z| \x_\k, \eta) \nadeffsa{j}{z}\varphi(\x_\k) \nadeffsa{j}{z}\psi(\x_\k) p_q(x_\k^j, z)dz,
\end{align*}
where $\nabla_j^z$ and $\x_{\k}^{j,z}$ are introduced in \eqref{:31b} and \eqref{:31c}, respectively.
For $\varphi, \psi \in C_0^{\infty}(S^\k)$ and $f, g \in \Dc$ with $\varphi \otimes f, \psi \otimes g \in \Dinfmuk$ we define 
\begin{align*}
&\mathcal{E}_{r, \ell, q}^{(1), \n, \m}(\varphi \otimes f, \psi \otimes g) 
= \int_{U_{\ell}^\k}\rho^\k(\x_\k) d\x_\k \int_{\M} (\mu_{\x_\k})_{\ell}^\m(d\eta)
\\&\quad \times 
\int_{U_{r}^{\n} \times (U_{\ell}\setminus U_{r})^{\m-\n}}  \nabla_{q, \eta}^{[\k]}[\varphi, \psi](\x_\k) f_{r, \eta}^{\m}(\y_{\m})g_{r, \eta}^{\m}(\y_{\m}) \muxnm(d\y_{\m}). 
\end{align*}

Then we can prove the following lemma. 

\begin{lemma} \label{L:41c_2}
Let $\k\in\N$. Assume that \As[QG] and \As[EQ] hold. Then, $(\mathcal{E}_{r, \ell, q}^{(1), \n, \m}, \Dinfmuk)$ is closable on $L^2(S^\k \times \M_{\ell, \m}, (\muk)_{\ell}^{\m})$ for $\m \ge \n$, $\ell \ge r$, $q\in\N$, where $(\muk)_{\ell}^{\m}$ is given as $(\muk)_{\ell}^{\m}(\cdot ) \equiv \muk(\cdot | S^{\k} \times \M_{\ell, \m})$. 
\end{lemma}
\begin{proof}
We calculate $\mathcal{E}_{r, \ell, q}^{(1), \n, \m}$. Then, for $\varphi \in C_0^{\infty}(S^\k)$ and $f \in \Dc$ with $\varphi \otimes f \in \Dinfmuk$, we see that
\begin{align}\notag
&\mathcal{E}_{r, \ell, q}^{(1), \n, \m}(\varphi \otimes f, \varphi \otimes f)  
\\ \notag
&= \frac{1}{2}\sum_{j=1}^{\k} \int_{U_{\ell}^\k} \Gamma_q(\x_\k) \rho^\k(\x_\k) d\x_\k \int_{U_{\ell}} (\varphi(\x_\k^{j, z}) - \varphi(\x_\k))^2 p_q(x_\k^j, z)dz  
\\ \notag
&\quad \times \int_{\M_{r, \n} \cap \M_{\ell, \m}} \Gamma_q(z| \x_\k, \eta) f(\eta)^2 \mu_{\x_\k}(d\eta) \\
&\leq q\sum_{j=1}^{\k} \int_{U_{\ell}^{\k+1}} \Gamma_q(\x_\k) \varphi(\x_\k^{j,z})^2 \frac{\rho^\k(\x_\k)}{\rho^\k(\x_\k^{j,z})}\rho^\k(\x_\k^{j,z}) d\x_\k dz  \notag 
\\ \notag
&\quad \quad \times \int_{\M_{r, \n} \cap \M_{\ell, \m}} \Gamma_q(z| \x_\k, \eta) f(\eta)^2 \frac{d\mu_{\x_\k}}{d\mu_{\x_\k^{j,z}}}(\eta) \mu_{\x_\k^{j,z}}(d\eta)  
\\ \label{:41r}
&\quad + \k q|U_{\ell}| \int_{S^\k} \varphi(\x_\k)^2 \rho^\k(\x_\k) d\x_\k \int_{\M_{r, \n} \cap \M_{\ell, \m}} f(\eta)^2 \mu_{\x_\k}(d\eta). 
\end{align}
Using the definitions of $\Gamma_q(\x_\k)$ and $\Gamma_q(z| \x_\k, \eta)$, we can see that the right-hand side of \eqref{:41r} is bounded by
$$ \k (q^3+q)|U_{\ell}| \int_{S^\k \times \M_{\ell, \m}} (\varphi \otimes f)^2(\x_\k, \eta) \mu^{[\k]}(d\x_\k d\eta). $$
Hence we have 
\begin{equation} \label{:41s}
\mathcal{E}_{r, \ell, q}^{(1), \n, \m}(\varphi \otimes f, \varphi \otimes f) \leq \k (q^3+q)|U_{\ell}| \| \varphi \otimes f \|_{L^2(S^\k \times \M_{\ell, \m}, \muk)}. 
\end{equation}
Let $\{ \varphi_n \otimes f_n \}$ be a $\mathcal{E}_{r, \ell, q}^{(1), \n, \m}$-Cauchy sequence in $\Dinfmuk$ such that 
$$
\displaystyle \lim_{n \to \infty} \| \varphi_n \otimes f_n \|_{L^2(S^\k \times \M_{\ell, \m}, (\muk)_{\ell}^{\m})} = 0.
$$ 
Then by \eqref{:41s} we have that 
\begin{equation*}
\lim_{n \to \infty} \mathcal{E}_{r, \ell, q}^{(1), \n, \m}(\varphi_n \otimes f_n, \varphi_n \otimes f_n) = 0. 
\end{equation*}
Hence, $(\mathcal{E}_{r, \ell, q}^{(1), \n, \m}, \Dinfmuk)$ is closable on $L^2(S^\k \times \M_{\ell, \m}, (\muk)_{\ell}^{\m})$. Thus, the proof is complete.
\end{proof}

We use the following lemma, which is given in \cite[Lemma 2.1(1),(2)]{O96}(see also \cite[Prop. I.3.7]{MR}). 
\begin{lemma} \label{L:41d} {\rm (\cite[Lemma 2.1(1),(2)]{O96})} 
Let $\{ (\mathfrak{E}^{(n)}, \mathfrak{D}^{(n)}) \}_{n \in \N}$ be a sequence of positive-definite, symmetric bilinear forms on $L^2(\M, \mu)$. \\
{\rm (i)} \  Suppose that $(\mathfrak{E}^{(n)}, \mathfrak{D}^{(n)})$ is closable for any $n \in \N$ and $\{ (\mathfrak{E}^{(n)}, \mathfrak{D}^{(n)}) \}$ is increasing, i.e. for any $n \in \N$ we have $\mathfrak{D}^{(n)} \supset \mathfrak{D}^{(n+1)}$ and $\mathfrak{E}^{(n)}(f, f) \leq \mathfrak{E}^{(n+1)}(f, f)$ for any $f \in \mathfrak{D}^{(n+1)}$. Let $\mathfrak{E}^{\infty}(f, f) = \lim_{n \to \infty} \mathfrak{E}^{(n)}(f, f)$ with the domain $\mathfrak{D}^{\infty} = \{ f \in \cap_{n \in \N} \mathfrak{D}^{(n)}; \sup_{n \in \N}\mathfrak{E}^{(n)}(f, f) < \infty \}$. Then $(\mathfrak{E}^{\infty}, \mathfrak{D}^{\infty})$ is closable on $L^2(\M, \mu)$. \\
{\rm (ii)} \  In addition to assumption (i), assume that $(\mathfrak{E}^{(n)}, \mathfrak{D}^{(n)})$ are closed. Then, $(\mathfrak{E}^{\infty}, \mathfrak{D}^{\infty})$ is closed. 
\end{lemma}

We set $\mathcal{E}_{r, \ell}^{(1), \n, \m} = \lim_{q \to \infty} \mathcal{E}_{r, \ell, q}^{(1), \n, \m}$. Using Lemma \ref{L:41d} we can state the following lemma. 

\begin{lemma} \label{L:41e}
Suppose that $(\mathcal{E}_{r, \ell, q}^{(1), \n, \m}, \Dinfmuk)$ is closable on $L^2(S^\k \times \M_{\ell, \m}, (\muk)_{\ell}^{\m})$ for $\m \ge \n$, $\ell \ge r$, $\k, q\in\N$. Then $(\mathcal{E}_{r, \ell}^{(1), \n, \m}, \Dinfmuk)$ is closable on $L^2(S^\k \times \M_{\ell, \m}, (\muk)_{\ell}^{\m})$ for $\m \ge \n$, $\ell \ge r$, $\k\in\N$.
\end{lemma}

From Lemmas \ref{L:41c_2} and \ref{L:41e}, we have the following lemma. 

\begin{lemma} \label{L:41f}
Let $\k\in\N$. Assume that \As[QG] and \As[EQ] hold. Then $(\mathcal{E}_{r, \ell}^{(1), \n, \m}, \Dinfmuk)$ is closable on $L^2(S^\k \times \M_{\ell, \m}, (\muk)_{\ell}^{\m})$ for $\m \ge \n$, $\ell \ge r$, $k\in\N$.
\end{lemma}

For $\mathbf{u}_\n=(u_\n^1, \dots, u_\n^\n)$, we set
\begin{align*}
&\Lambda_q(\mathbf{u}_{\n})= \prod_{i=1}^{\n}\1 
\Big( |\Phi(u_\n^i)+\sum_{\substack{j \neq i \\ 1 \leq j \leq n}} \Psi(u_\n ^i, u_\n ^j)|\le q \Big), 
\\
&\Lambda_q(y|\mathbf{u}_{\n}) = \1\Big( |\Phi(y)+\sum_{j =1}^{\n} \Psi(y, u_\n ^j)|\le q \Big). 
\end{align*}
It is readily seen that $\{ \Lambda_q(\x_{\n}) \}$, $\{ \Lambda_q(y|\x_{\n}) \}$ are nondecreasing sequences in $q$. 
We set
\begin{align*}
&\Dqban{1}[f, g](\y_{\m}, \x_\k) \\
&= \Lambda_q(\y_{\m}, \x_\k) \sum_{j=1}^{\n} \int_{U_{r}} \Lambda_q(z|\y_{\m}^{\hiku{j}}, \x_\k)\nadeffsa{j}{z}f_{\ell, \eta}^{\m}(\y_{\m}) \cdot \nadeffsa{j}{z}g_{\ell, \eta}^{\m}(\y_{\m}) p_q(y_\m ^j, z) dz, \\
&\Dqban{2}[f, g](\y_{\m}, \x_\k) \\
&= \Lambda_q(\y_{\m}, \x_\k) \sum_{j=1}^{\n} \int_{U_{\ell}\setminus U_{r}} \Lambda_q(z|\y_{\m}^{\hiku{j}}, \x_\k) \nadeffsa{j}{z}f_{\ell, \eta}^{\m}(\y_{\m}) \cdot \nadeffsa{j}{z}g_{\ell, \eta}^{\m}(\y_{\m}) p_q(y_\m ^j, z) dz, \\
&\Dqban{3}[f, g](\y_{\m}, \x_\k) \\
&= \Lambda_q(\y_{\m}, \x_\k) \sum_{j=\n+1}^{\m}
\int_{U_{r}} \Lambda_q(z|\y_{\m}^{\hiku{j}}, \x_\k) \nadeffsa{j}{z}f_{\ell, \eta}^{\m}(\y_{\m}) \cdot  \nadeffsa{j}{z}g_{\ell, \eta}^{\m}(\y_{\m}) p_q(y_\m ^j, z)dz, 
\end{align*}
where $\y_{\m}^{\hiku{j}}= (y_\m^1, \ldots, y_\m^{j-1}, y_\m^{j+1}, \ldots, y_\m^{\m}) \in S^{\m-1}$, and 
\begin{align*}
\nadeffsa{j}{z}f_{r, \eta}^{\m}(\y_{\m}) &= \begin{cases} f_{r, \unl (z,\eta)}^{{\m}-1}(\y_{\m}^{\hiku{j}}) - f_{r, \eta}^{\m}(\y_{\m}), \quad \text{if $\y_{\m} \in U_{r}^{\m}$, $z \notin U_{r}$, }
\\
f_{r, \eta}^{{\m}}(\y_{\m}^{j,z}) - f_{r, \eta}^{\m}(\y_{\m}), \quad \text{if $\y_{\m} \in U_{r}^{\m}$, $z \in U_{r}$, }
\end{cases}
\end{align*}
for $j=1,2,\dots, \m$.
 We set $\Dq = \Dqban{1}+\Dqban{2}+\Dqban{3}$. 
For $\varphi, \psi \in C_0^{\infty}(S^\k)$ and $f, g \in \Dc$ with $\varphi \otimes f, \psi \otimes g \in \Dinfmuk$ we set 
\begin{align*}
\mathcal{E}_{\x_\k, r, \ell, q, \eta}^{(2), \n, \m}(\varphi \otimes f, \psi \otimes g) &= \int_{U_{r}^{\n} \times (U_{\ell}\setminus U_{r})^{\m-\n}} \varphi(\x_\k) \psi(\x_\k) \Dq[f, g](\y_{\m}, \x_\k) \muxnm(d\y_{\m}) \\
\mathcal{E}_{\x_\k, r, \ell, \eta}^{(2), \n, \m}(\varphi \otimes f, \psi \otimes g) &= \lim_{q \to \infty} \mathcal{E}_{\x_\k, r, \ell, q, \eta}^{(2), \n, \m}(\varphi \otimes f, \psi \otimes g). 
\end{align*}
Then we can prove the following lemma. 

\begin{lemma} \label{L:41g}
Let $\k\in\N$. Assume that \As[QG] and \As[EQ] hold. Then $(\mathcal{E}_{\x_\k, r, \ell, q, \eta}^{(2), \n, \m}, \Dinfmuk)$ is closable on $L^2(S^\k \times \M_{\ell, \m}, \delta_{\x_\k} \otimes (\mu_{\x_\k})_{\ell, \eta}^{\m})$ for $\m \ge \n$, $\ell \ge r$, $\k, q\in\N$, $d\x_\k$-a.e. $\x_\k$ and $(\mu_{\x_\k})_{\ell}^{\m}$-a.e. $\eta$. 
\end{lemma}

\begin{proof}
By the property of $\muxnm$ and using a similar approach as for the proof of \cite[Lemma 4.1]{E.19} we have 
\begin{equation} \label{:41t}
\mathcal{E}_{\x_\k, r, \ell, q, \eta}^{(2), \n, \m}(\varphi \otimes f, \varphi \otimes f) \leq 2(\cref{qg}^2e^{2q}+1)\m q |U_{\ell}| \| \varphi \otimes f \|_{L^2(S^\k \times \M_{\ell, \m}, \delta_{\x_\k} \otimes (\mu_{\x_\k})_{\ell, \eta}^{\m})}^2, 
\end{equation}
where $\cref{qg}$ is the constant in \eqref{:2h}. 
Let $\{ \varphi_n \otimes f_n \}$ be a $\mathcal{E}_{\x_\k, r, \ell, q, \eta}^{(2), \n, \m}$-Cauchy sequence in $\Dinfmuk$ such that $\displaystyle \lim_{n \to \infty} \| \varphi_n \otimes f_n \|_{L^2(S^\k \times \M_{\ell, \m}, \delta_{\x_\k} \otimes (\mu_{\x_\k})_{\ell, \eta}^{\m})} = 0$. Then, \eqref{:41t} yields 
\begin{equation*}
\lim_{n \to \infty} \mathcal{E}_{\x_\k, r, \ell, q, \eta}^{(2), \n, \m}(\varphi_n \otimes f_n, \varphi_n \otimes f_n) = 0 \quad \text{for $(\mu_{\x_\k})_{\ell}^{\m}$-a.s. $\eta$.} 
\end{equation*}
Hence, $(\mathcal{E}_{\x_\k, r, \ell, q, \eta}^{(2), \n, \m}, \Dinfmuk)$ is closable on $L^2(S^\k \times \M_{\ell, \m}, \delta_{\x_\k} \otimes (\mu_{\x_\k})_{\ell, \eta}^{\m})$. Thus, the proof is complete. 
\end{proof}

Using Lemma \ref{L:41d}, we can state the following lemma. 

\begin{lemma} \label{L:41h}
Suppose that $(\mathcal{E}_{\x_\k, r, \ell, q, \eta}^{(2), \n, \m}, \Dinfmuk)$ is closable on $L^2(S^\k \times \M_{\ell, \m}, \delta_{\x_\k} \otimes (\mu_{\x_\k})_{\ell, \eta}^{\m})$ for $\m \ge \n$, $\ell \ge r$, $\k, q\in\N$, $d\x_\k$-a.e. $\x_\k$ and $(\mu_{\x_\k})_{\ell}^m$-a.e. $\eta$. Then, 
$(\mathcal{E}_{\x_\k, r, \ell, \eta}^{(2), \n, \m}, \Dinfmuk)$ is closable on $L^2(S^\k \times \M_{\ell, \m}, \delta_{\x_\k} \otimes (\mu_{\x_\k})_{\ell}^{\m})$ for $\m \ge \n$, $\ell \ge r$, $\k\in\N$, $d\x_\k$-a.e. $\x_\k$ and $(\mu_{\x_\k})_{\ell}^\m$-a.e. $\eta$.
\end{lemma}

For $r, \ell, \k, \n, \m \in \N$ and $\x_\k \in S^\k$, we set
\begin{align*}
\mathcal{E}_{\x_\k, r, \ell}^{(2), \n, \m}(\varphi \otimes f, \psi \otimes g) &= \int_{\M} \mathcal{E}_{\x_\k, r, \ell, \eta}^{(2), \n, \m}(\varphi \otimes f, \psi \otimes g) (\mu_{\x_\k})_{\ell}^\m(d\eta), 
\\
\mathcal{E}_{r, \ell}^{(2), \n, \m}(\varphi \otimes f, \psi \otimes g) &= \int_{S^\k} \mathcal{E}_{\x_\k, r, \ell}^{(2), \n, \m}(\varphi \otimes f, \psi \otimes g) \rho^\k(\x_\k) d\x_\k.
\end{align*}
In the same way as the proof of \cite[Lemma 4.3]{E.19}, we have the following lemmas. 

\begin{lemma} \label{L:41k}
Suppose that $(\mathcal{E}_{\x_\k, r, \ell, \eta}^{(2), \n, \m}, \Dinfmuk)$ is closable on $L^2(S^\k \times \M_{\ell, \m}, \delta_{\x_\k} \otimes (\mu_{\x_\k})_{\ell, \eta}^{\m})$ for $\m \ge \n$, $\ell \ge r$, $\k\in\N$, $d\x_\k$-a.e. $\x_\k$ and $(\mu_{\x_\k})_{\ell}^{\m}$-a.e. $\eta$. 
Then, $(\mathcal{E}_{\x_\k, r, \ell}^{(2), \n, \m}, \Dinfmuk)$ is closable on $L^2(S^\k \times \M_{\ell, \m}, \delta_{\x_\k} \otimes (\mu_{\x_\k})_{\ell}^{\m})$ for $\m \ge \n$, $\ell \ge r$, $\k\in\N$, and $d\x_\k$-a.e. $\x_\k$.
\end{lemma}

\begin{lemma} \label{L:41l}
Suppose that $(\mathcal{E}_{\x_\k, r, \ell}^{(2), \n, \m}, \Dinfmuk)$ is closable on $L^2(S^\k \times \M_{\ell, \m}, \delta_{\x_\k} \otimes (\mu_{\x_\k})_{\ell}^{\m})$ for $\m \ge \n$, $\ell \ge r$, $\k\in\N$, and $d\x_\k$-a.e. $\x_\k$. Then, $(\mathcal{E}_{r, \ell}^{(2), \n, \m}, \Dinfmuk)$ is closable on $L^2(S^\k \times \M_{\ell, \m}, (\muk)_{\ell}^{\m})$ for $\m \ge \n$, $\ell \ge r$, $k\in\N$. 
\end{lemma}

From Lemmas \ref{L:41g}--\ref{L:41l}, we have the following lemma. 

\begin{lemma} \label{L:41m}
Let $\k\in\N$. Assume that \As[QG] and \As[EQ] hold. Then  $(\mathcal{E}_{r, \ell}^{(2), \n, \m}, \Dinfmuk)$ is closable on $L^2(S^\k \times \M_{\ell, \m}, (\muk)_{\ell}^{\m})$ for $\m \ge \n$, $\ell \ge r$, $k\in\N$. 
\end{lemma}

We set $\mathcal{E}_{r, \ell}^{\n, \m, [\k]} = \mathcal{E}_{r, \ell}^{(1), \n, \m} + \mathcal{E}_{r, \ell}^{(2), \n, \m}$. From Lemmas \ref{L:41f} and \ref{L:41m}, we have the following lemma. 

\begin{lemma} \label{L:41n}
Let $\k\in\N$. Assume that \As[QG] and \As[EQ] hold. Then, $(\mathcal{E}_{r, \ell}^{\n, \m, [\k]}, \Dinfmuk)$ is closable on $L^2(S^\k \times \M_{\ell, \m}, (\mu^{[\k]})_{\ell}^{\m})$ for $\m \ge \n$, $\ell \ge r$, and $\k\in\N$. 
\end{lemma} 

\begin{proof}[Proof of Theorem \ref{T:31b}]
We set 
\begin{align*}
\mathcal{E}_r^{[\k]} (\varphi \otimes f, \varphi \otimes f) &= \lim_{\ell \to \infty} \sum_{\m = 0}^{\infty} \sum_{\n = 0}^{\m} \mathcal{E}_{r, \ell}^{\n, \m, [\k]}(\varphi \otimes f, \varphi \otimes f), \\
\mathcal{E}_{\infty}^{[\k]} (\varphi \otimes f, \varphi \otimes f) &= \lim_{r \to \infty} \mathcal{E}_r^{[\k]} (\varphi \otimes f, \varphi \otimes f). 
\end{align*}
Note that $(\mathcal{E}_r^{[\k]}, \Dinfmuk)$ is increasing in $r$. Set
$$ 
\mathcal{D}_{\infty}^{[\k]} = \Big\{ \varphi \otimes f \in \Dinfmuk; \sup_{r \in \N}\mathcal{E}_r^{[\k]}(\varphi \otimes f, \varphi \otimes f) < \infty \Big\}. 
$$
From Lemma \ref{L:41d} $(\mathcal{E}_\infty^{[\k]}, \mathcal{D}_{\infty}^{[\k]})$ is closable.
Additionally, for any $\varphi \in C_0^{\infty}(S^\k)$ and $f \in \Dc$ with $\varphi \otimes f \in \Dinfmuk$,
there exists some $r\in\N$ such that
\begin{equation*} 
\mathcal{E}_{\infty}^{[\k]} (\varphi \otimes f, \varphi \otimes f) = 
\mathcal{E}_{r}^{[\k]} (\varphi \otimes f, \varphi \otimes f) =\Emuk(\varphi \otimes f, \varphi \otimes f)< \infty, 
\end{equation*}
and  so $\mathcal{D}_{\infty}^{[\k]} = \Dinfmuk$. 
Therefore $(\Emuk, \Dinfmuk)$ is closable and the proof is complete. 
\end{proof}

\subsection{Proof of Theorem \ref{T:31c}} \label{SS:42}

In this section, we prove Theorem \ref{T:31c}.

\begin{lemma}\label{L:42a}
Let $\k\in \N$ and $\mathfrak{A} \subset S^\k \times \M$ be such that $(\unl^{[\k]})^{-1}(\unl^{[\k]}(\mathfrak{A})) = \mathfrak{A}$. Then, $\Capa^{\mu}(\unl^{[\k]}(\mathfrak{A}))=0$ implies $\Capa^{\muk}(\mathfrak{A})=0$. (Here, $\unl^{[\k]}$ is defined in Section 3.1.)
\end{lemma}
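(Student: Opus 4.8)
The plan is to compare the capacities associated with the two Dirichlet forms $(\Emu,\Dmu)$ on $L^2(\M,\mu)$ and $(\Emuk,\Dmuk)$ on $L^2(S^\k\times\M,\muk)$ via the unlabeling map $\unl^{[\k]}:S^\k\times\M\to\M$. The key structural observation is that $\unl^{[\k]}$ pushes the $\k$-Campbell measure $\muk$ forward in a controlled way; indeed, for a local function $h$ on $\M$, the pullback $h\circ\unl^{[\k]}$ lies in $\Dck$ (it is smooth and compactly supported in the $\x_\k$ variables after a cutoff, and local in $\xi$), and by the very definition (\ref{Dkdef}) of $\Dk$ together with the Palm/Campbell relation $\muk(d\x_\k d\xi)=\mu_{\x_\k}(d\xi)\rho^\k(\x_\k)d\x_\k$, one has $\Emuk(h\circ\unl^{[\k]},h\circ\unl^{[\k]})\le C\,\Emu(h,h)$ on the relevant region, or at least an inequality that suffices for the capacity comparison. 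The point of the hypothesis $(\unl^{[\k]})^{-1}(\unl^{[\k]}(\mathfrak A))=\mathfrak A$ is that it makes $\mathfrak A$ a "saturated" set, so that $\unl^{[\k]}$ behaves like a genuine quotient map on it and open neighborhoods pull back sensibly.

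First I would reduce to the case of open sets: since $\Capa^{\muk}(\mathfrak A)=\inf\{\Capa^{\muk}(\mathfrak O):\mathfrak O\text{ open},\ \mathfrak A\subset\mathfrak O\}$, and by hypothesis $\unl^{[\k]}(\mathfrak A)$ has zero $\Capa^\mu$-capacity, I pick open sets $\mathfrak O_n\supset\unl^{[\k]}(\mathfrak A)$ in $\M$ with $\Capa^\mu(\mathfrak O_n)\to 0$, and take $u_n\in\mathcal L_{\mathfrak O_n}$ with $\Emu_1(u_n,u_n)\to 0$. The set $(\unl^{[\k]})^{-1}(\mathfrak O_n)$ is open in $S^\k\times\M$ (continuity of $\unl^{[\k]}$) and contains $\mathfrak A$. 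I then want to use $u_n\circ\unl^{[\k]}$, suitably truncated by a cutoff $\varphi\otimes\chian$ in the spirit of Lemma \ref{cutoffinyo}, as a competitor in the variational problem defining $\Capa^{\muk}((\unl^{[\k]})^{-1}(\mathfrak O_n))$. The truncation is needed because $u_n\circ\unl^{[\k]}$ need not have compact support in $\x_\k\in S^\k$; using the cutoff functions $\chian$ and a sequence of compactly supported $\varphi$ exhausting $S^\k$, together with Lemma \ref{cutoffinyo} and Lemma \ref{hojohojo}, lets me stay inside $\Dmuk$ while controlling the error in $\|\cdot\|_1$-norm. One must also handle the $L^2$ part: $\|u_n\circ\unl^{[\k]}\|^2_{L^2(S^\k\times\M,\muk)}=\int_{S^\k}\rho^\k(\x_\k)\,d\x_\k\int u_n^2\,d\mu_{\x_\k}$, which is not obviously finite, so the cutoff $\varphi\otimes\chian$ must be introduced before taking norms, and then one lets the cutoff exhaust $S^\k\times\M$ at the end.

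The main obstacle, and the step I would spend the most care on, is the energy estimate $\Emuk(\varphi\otimes\chian\,(u_n\circ\unl^{[\k]}),\,\cdot\,)\le C(\varphi,n)\,\bigl(\Emu_1(u_n,u_n)+\text{small}\bigr)$ uniformly in the exhausting parameters, combined with the fact that $\varphi\otimes\chian\,(u_n\circ\unl^{[\k]})\ge 1$ $\muk$-a.e. on $(\unl^{[\k]})^{-1}(\mathfrak O_n)\cap(\supp\varphi\times\M[2(\bd a_n)_+])$. The $\nabla^{[\k]}$ part of $\Dk$ produces terms coming from moving the tagged coordinates $x_\k^j$, which do not appear in $\Emu$ at all; here one uses that $u_n\circ\unl^{[\k]}$ is invariant under the exchange $\x_\k\leftrightarrow\xi$ built into $\unl^{[\k]}$, so $\nadeffsa{j}{y}(u_n\circ\unl^{[\k]})(\x_\k,\xi)$ equals an increment of $u_n$ evaluated at unlabeled configurations, and this increment is exactly of the form controlled by $\bbD[u_n,u_n]$ after integrating against $p(x_\k^j,y)\,dy$ and using the Campbell identity (\ref{3_13}) to convert $\mu_{\x_\k}(d\xi)\rho^\k(\x_\k)d\x_\k\,dy$ into a symmetric expression comparable to $\mu^{[1]}$, hence to $\mu$. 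Making this conversion rigorous — in particular checking that the extra $\nabla^{[\k]}$ contribution is genuinely bounded by the $\bbD$-energy of $u_n$ and not by something larger — is the crux. Once that is in hand, letting $n\to\infty$ gives $\Capa^{\muk}$ of the truncated neighborhood tending to zero along the cutoff exhaustion, and then letting the cutoff exhaust $S^\k\times\M$ and using that $\mathfrak A$ is covered by $\bigcup_n\bigl(\supp\varphi_n\times\M[2(\bd a_n)_+]\bigr)$ (by Lemma \ref{teninpalm}) yields $\Capa^{\muk}(\mathfrak A)=0$.
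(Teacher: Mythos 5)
Your plan is the right one and is essentially the argument that Osada's \cite[Lemma 4.1]{o.tp}, which the paper cites verbatim for this lemma, carries out: pull back a capacity competitor $u_n$ for $\unl^{[\k]}(\mathfrak A)$ through $\unl^{[\k]}$, multiply by a cutoff $\varphi\otimes\chian$ to stay inside $\Dmuk$, and estimate the $\Emuk$-energy of the product by the $\Emu_1$-energy of $u_n$.

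The one place where you signal uncertainty --- the worry that the $\nabla^{[\k]}$ part of $\Dk$ might contribute something ``larger'' than the $\bbD$-energy of $u_n$ --- has in fact a clean and exact resolution, and realizing this simplifies the crux step considerably. Writing $h=u\circ\unl^{[\k]}$ and $\zeta=\unl^{[\k]}(\x_\k,\xi)=\sum_j\delta_{x_\k^j}+\xi$, one has $\nadeffsa{j}{y}h(\x_\k,\xi)=u(\zeta^{x_\k^j y})-u(\zeta)$ for the tagged increments and $h(\x_\k,\xi^{xy})-h(\x_\k,\xi)=u(\zeta^{xy})-u(\zeta)$ for the environment increments, so the sum defining $\Dk[h,h](\x_\k,\xi)$ is \emph{exactly} a relabeling of the sum defining $\bbD[u,u](\zeta)$; the $\nabla^{[\k]}$ part supplies the $\k$ terms with $x\in\{x_\k^1,\dots,x_\k^\k\}$ and the $\bbD$ part supplies the rest, giving the identity $\Dk[h,h](\x_\k,\xi)=\bbD[u,u](\unl^{[\k]}(\x_\k,\xi))$ with no surplus. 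Your invocation of the Radon--Nikodym identity (\ref{3_13}) for the ``conversion'' step is slightly off target: what is actually needed is only the defining property of the Campbell measure $\muk$, namely that an integral $\int G\circ\unl^{[\k]}\,d\muk$ equals a $\mu$-integral of $G$ weighted by the number of ordered distinct $\k$-tuples of points of $\zeta$ in the range of $\x_\k$; restricted to $\supp\varphi\times\M[2(\bd{a}_n)_+]$ this counting factor is a fixed finite constant, which together with the Leibniz-type bound (\ref{moto}) for the cutoff and $0\le u_n\le 1$ yields $\Emuk_1(\varphi\otimes\chian\,h_n,\varphi\otimes\chian\,h_n)\le C(\varphi,n)\,\Emu_1(u_n,u_n)$. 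The remaining exhaustion over the compact nest $\{\supp\varphi\times\M[2(\bd{a}_n)_+]\}$ and countable subadditivity of $\Capa^{\muk}$ is exactly as you describe. So your proof is correct in outline and matches the intended one; the two local inaccuracies are the phantom ``extra contribution'' and the citation of (\ref{3_13}) where the Campbell measure identity is what does the work.
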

\begin{proof}
This lemma is proved in the same way as \cite[Lemma 4.1]{o.tp}. 
\end{proof}

For a process $\{Z_t\}_{t\ge 0}$ and a random time $\sigma\ge 0$, we set 
$Z^{\sigma}_{t} = Z_{\sigma \wedge t}$, and 
 $$
 \P_{\xi}^{\sigma}(\Xi\in \cdot) = \P_{\xi}(\Xi^{\sigma} \in \cdot), 
 \quad 
 \P_{(\x_\k, \eta)}^{[\k], \sigma}((\X^{[\k]},\Xi^{[\k]})\in\cdot ) 
 = \P_{(\x_\k, \eta)}^{[\k]}((\X^{[\k], \sigma}, \Xi^{[\k], \sigma}) \in \cdot).
$$
We set  
\begin{align*}
\sigma_{r} &= \inf \{ t>0; \text{$\Xi_t(\Ur) \neq \Xi_{t-}(\Ur)$ or $\Xi_t(\partial \Ur) \geq 1$} \}, \\
\sigma_{r}^{[\k]} &= \inf \{ t>0 ; \text{$\X^{[\k]}_t \notin \Ur^\k$ or $\Xi^{[\k]}_t(\Ur) \neq \Xi^{[\k]}_{t-}(\Ur)$ or $\Xi^{[\k]}_t(\partial \Ur) \geq 1$} \}. 
\end{align*}
Then $\P_{\xi}^{\sigma_{r}}$ and $\P_{(\x_\k, \eta)}^{[\k], \sigma_{r}^{[\k]}}$ are strong Markov processes. Note that the  sets 
\begin{align*}
\{ \xi; \xi(\partial U_r) = 0, \xi(U_r) = m \} &\subset \M, \\
\{ (\x_{\k}, \xi) ; \ \x_{\k} \in S^{\k}, \xi(\partial U_r) = 0, \xi(U_r) = m\} &\subset S^{\k} \times \M,
\end{align*}
are open for any $m \in \N$. Then, we can see that $\P_{\xi}^{\sigma_{r}}$ and $\P_{(\x_\k, \eta)}^{[\k], \sigma_{r}^{[\k]}}$ are associated with parts of the Dirichlet forms $(\Emu, \Dmu)$ and $(\Emuk, \Dmuk)$, respectively. 
We denote the capacity associated with the process $\P_{\xi}^{\sigma_{r}}$ by $\Capa^{\mu, \sigma_{r}}$. 

The following lemma is a modification of \cite[Lemma 4.2]{o.tp}, and the proof is obtained using the same procedure.

\begin{lemma} \label{L:42b} 
Suppose that the assumptions in Theorem \ref{T:31c} hold. Then there exists $\mathfrak{A}_{r}^\k \subset \Ur^\k \times \M$ such that 
\begin{align} \notag 
&(\Ur^\k \times \M) \cap (\unl^{[\k]})^{-1}\left( \unl^{[\k]}(\mathfrak{A}_{r}^\k) \right) = \mathfrak{A}_{r}^\k, \quad \unl^{[\k]}(\mathfrak{A}_{r}^\k) \subset \Msi, 
\\ \label{:42b}
&\Capa^{\mu, \sigma_{r}}\left( \unl^{[\k]}(\Ur^\k \times \M) \setminus \unl^{[\k]}(\mathfrak{A}_{r}^\k) \right) = 0, 
\\ \notag 
&\P^{\mu, \sigma_{r}}_{\unl^{[\k]}(\x_\k, \eta)}\left( \Xi_t \in \text{$\unl^{[\k]}(\mathfrak{A}_{r}^\k)$ for all $t<\sigma_{r}$} \right) = 1 \quad \text{for all $(\x_\k, \eta) \in \mathfrak{A}_{r}^\k$}, 
\\ \notag 
&\P^{\sigma_{r}}_{\unl^{[\k]}(\x_\k, \eta)} = \P^{[\k], \sigma_{r}^{[\k]}}_{(\x_\k, \eta)} \circ (\unl^{[\k]})^{-1} \quad \text{for all $(\x_\k, \eta) \in \mathfrak{A}_{r}^\k$}. 
\end{align}
\end{lemma}

We fix $\e>0$ and $r>0$, and introduce smooth functions $\psi_+$ and $\psi_- : S \to [0, \infty)$ such that 
\begin{equation}\label{:42e}
\psi_+(x) = \begin{cases} 1, & \text{$|x| \leq r$, } \\
\in (0,1), & \text{$r < |x| < r+\e$, } \\
0, & \text{$|x| \ge r+\e$, } 
\end{cases}
\quad 
\psi_-(x) = \begin{cases} 1, & \text{$|x| \leq r-\e$, } \\
\in (0,1), & \text{$r-\e < |x| < r$, } \\
0, & \text{$|x| \ge r$. } 
\end{cases}
\end{equation}
In the following, we treat $\psi_+$ and $\psi_-$ together and use the notation $\psi_{\pm}$. 
In addition, we set $\tilde{\psi}_{\pm}(\xi) = \int_S \psi_{\pm}(x) \xi(dx)$, respectively. Note that the integrations in the definitions of $\tilde{\psi}_{\pm}$ are finite. Thus, $\tilde{\psi}_{\pm}$ are local functions on $\M$. By a straightforward calculation, we have the following for $f \in \Dc$
\begin{align*}
\Emu(f, \tilde{\psi}_{\pm}) 
&= -\int_{\M} \mu(d\xi) f(\xi) \int_S \xi(dx) \int_S (\tilde{\psi}_{\pm}(\xi^{x, y})-\tilde{\psi}_{\pm}(\xi))c(x, y, \xi)dy
\\
&=-\int_{\M} \mu(d\xi) f(\xi) \int_S \xi(dx) \int_S ({\psi_{\pm}}(y)-{\psi_{\pm}}(x))c(x, y, \xi)dy.
\end{align*}
We consider an additive functional of $\Xi_t=\sum_{j=1}^\infty \delta_{X_t^j}$ defined by  $A^{[\tilde{\psi}_{\pm}]}_t=\tilde{\psi}_{\pm}(\Xi_t)-\tilde{\psi}_{\pm}(\Xi_0)$. 
By Fukushima's decomposition, we have $A^{[\tilde{\psi}_{\pm}]}_t = M^{[\tilde{\psi}_{\pm}]}_t + N^{[\tilde{\psi}_{\pm}]}_t$, where $M^{[\tilde{\psi}_{\pm}]}_t$ are the martingale additive functionals of finite energy and $N^{[\tilde{\psi}_{\pm}]}_t$ are the additive functionals of zero energy. We have
\begin{align*}
N^{[\tilde{\psi}_{\pm}]}_t 
&= \int_0^t ds \sum_{j\in\N} \int_S (\psi_{\pm}(y)-\psi_{\pm}(X_{s-}^j)) c(X_{s-}^j, y, \Xi_{s-}) dy,
\end{align*}
and $A^{[\tilde{\psi}_{\pm}]}$ are given by
\begin{equation*}
A^{[\tilde{\psi}_{\pm}]}_t = \int_0^t \int_{\N} \int_{S} \int_0^{\infty} 
(\psi_{\pm}( y)-\psi_{\pm}(X_{s-}^j)) \hat{a}(r, j, y, \Xi_{s-}) \hat{N}(dsdjdydr), 
\end{equation*}
where we set 
$$ 
\hat{a}(r, j, y, \xi) = \1\left\{ 0 \leq r \leq c(\lab(\xi)^j, y, \xi) \right\} 
$$
and $\hat{N}$ is the Poisson point process on $[0, \infty) \times \N \times S \times [0, \infty)$ with intensity $ds\zeta_{\N}(dj)dydr$ and $\zeta_{\N} = \sum_{j \in \N} \delta_{j}$ is the counting measure on $\N$. 
We consider another additive functional defined by
\begin{equation*}
\hat{A}^{[\tilde{\psi}_{\pm}]}_t = \int_0^t \int_{\N} \int_{S} \int_0^{\infty} 
\mathbf{1}_{[\varepsilon,\infty)}(|y-X_{s-}^j|)
|\psi_{\pm}( y)-\psi_{\pm}(X_{s-}^j)|  \hat{a}(r, j, y, \Xi_{s-}) \hat{N}(dsdjdydr).
\end{equation*}
Then we see that
\begin{align*}
\E_{\mu}[\hat{A}^{[\tilde{\psi}_{\pm}]}_t ]= t\int_{\M}\mu(d\xi) \nu_{\pm}(\xi)
\end{align*}
in the case where
\begin{align*} 
\nu_{\pm}(\xi) = \int_S \xi(dx) \int_{|x-y| \geq \e} |{\psi_{\pm}}(y)-{\psi_{\pm}}(x)| c(x, y, \xi)dy  
\end{align*}
are elements of $L^1(\M, \mu)$.
We can now prove the following lemma. 

\begin{lemma}\label{L:42c}
Suppose that the assumptions in Theorem \ref{T:31c} hold. Then $\nu_{\pm} \in L^1(\M, \mu)$ and $\E_{\mu}[\hat{A}^{[\tilde{\psi}_{\pm}]}_t ]<\infty$ for any $t>0$.
\end{lemma}

\begin{proof}
It is sufficient to show the lemma for $\nu_+$: the proof for $\nu_-$ uses the same method. 
From (\ref{:42e}) we can easily see that
\begin{align*}
\ \nu_+(\xi)  &\leq \int_{U_{r+\e}^c} \xi(dx) \int_{U_{r+\e} \cap \{ |x-y| \geq \e \}}c(x, y, \xi)dy  + \int_{U_r} \xi(dx) \int_{U_{r}^c \cap \{ |x-y| \geq \e \}}c(x, y, \xi)dy \\
&\quad + \int_{A_{r, \e}} \xi(dx) \int_{|x-y| \geq \e} c(x, y, \xi)dy \\
&=: I_1(\xi)+I_2(\xi)+I_3(\xi),  
\end{align*}
where we set $A_{r, \e}=\{ x \in S; r<|x| \leq r+\e \}$. 
By the definition of the reduced Palm measure, we have
\begin{align}\notag
&\int_{\M} I_1(\xi) \mu(d\xi) = \int_{U_{r+\e}^c} dx \rho^1(x) \int_{\M} \mu_x(d\eta) \int_{U_{r+\e} \cap \{ |x-y| \geq \e\}} 
c(x, y,\eta) dy  
\\ \notag
&= \int_{U_{r+\e}^c} dx \rho^1(x) \int_{\M} \mu_x(d\eta) \int_{U_{r+\e} \cap \{ |x-y| \geq \e\}} p(x, y) dy \\ \notag
&\quad + \int_{U_{r+\e}} dx \rho^1(x) \int_{\M} \mu_x(d\eta) \int_{U_{r+\e}^c \cap \{ |x-y| \geq \e\}} p(x, y) dy 
\\ \label{:42f}
&\leq \int_{U_{r+\e}^c} dx \rho^1(x) \int_{U_{r+\e} \cap \{ |x-y| \geq \e\}} p(x, y) dy + \int_{U_{r+\e}} dx \rho^1(x) \int_{U_{r+\e}^c \cap \{ |x-y| \geq \e\}} p(x, y) dy, 
\end{align}
where we use the reversibility, that is, $\frac{\rho^1(y)}{\rho^1(x)}\frac{d\mu_y}{d\mu_x} p(x, y) \rho^1(x)d\mu_xdx  = p(x, y) \rho^1(y)d\mu_ydy$ in the last line and we use the fact that $\mu_x$ is a probability measure on $\M$ in the last inequality.
By the assumptions \As[p.1], we can see that there exists some constant $C_{\e}>0$ such that 
\begin{equation}\label{:42g}
\int_{\{ |x-y| \geq \e \}}   p(x, y) dx \leq C_{\e}. 
\end{equation}
In addition, by \As[p.1] and \As[A.2] we obtain
\begin{equation*} 
\int_{U_r} dy\int_{\{ |x-y| \geq \e \}} \rho^1(x)p(x, y) dx < \infty, \quad \text{for any $r>0$}. \label{prCe}
\end{equation*}
Hence, we can show that
\begin{equation} \label{:42h}
\int_{U_{r+\e}^c} dx \rho^1(x) \int_{U_{r+\e} \cap \{ |x-y| \geq \e\}} p(x, y) dy
 = \int_{U_{r+\e}} dy\int_{U_{r+\e}^c \cap \{ |x-y| \geq \e \}} \rho^1(x) p(x, y) dx < \infty.
\end{equation}
In additiion, we have that
\begin{equation}\label{:42i}
\int_{U_{r+\e}} dx \rho^1(x) \int_{U_{r+\e}^c \cap \{ |x-y| \geq \e\}} p(x, y) dy \leq C_{\e} \int_{U_{r+\e}} \rho^1(x)dx < \infty,  
\end{equation}
where we use \eqref{:42g}. 
Thus, by \eqref{:42f}, \eqref{:42h} and \eqref{:42i} we have $\int_{\M} I_1(\xi) \mu(d\xi) < \infty$. In the same way, we can show that $\int_{\M} I_2(\xi) \mu(d\xi)<\infty$ and $\int_{\M} I_3(\xi) \mu(d\xi) < \infty$. Therefore we obtain $\nu_+ \in L^1(\M, \mu)$. This completes the proof.
\end{proof}

From the above lemma, we have
\begin{align*}
&\int_0^t \int_{\N} \int_{S} \int_0^{\infty}  \1\left\{ \left|\psi_{\pm}(y)-\psi_{\pm}(X_{s-}^j) \right| = 1 \right\} \hat{a}(r, j, y, \Xi_{s-}) \hat{N}(dsdjdydr) \leq \hat{A}^{[\tilde{\psi}_{\pm}]}_t < \infty
\end{align*}
for any $t \in [0, \infty)$ $\P_\mu$-a.s. Note that 
\begin{align*}
\left| \psi_+( y)-\psi_+(X_{s-}^j) \right| = 1 
&\Leftrightarrow [X_{s-}^j \in \Ur, y \in U_{r+\e}^c] \ \text{or} \ [X_{s-}^j \in U_{r+\e}^c, y \in \Ur],  \\
\left| \psi_-( y)-\psi_-(X_{s-}^j) \right| = 1 
&\Leftrightarrow [X_{s-}^j \in U_{r-\e}, y \in U_{r}^c] \ \text{or} \ [X_{s-}^j \in U_{r}^c, y \in U_{r-\e}].
\end{align*}
Hence, we have the following lemma. 
\begin{lemma} \label{L:42d}
Suppose that the assumptions in Theorem \ref{T:31c} hold.
For any $T \in \N$, we have
\begin{align*}
&\#\big\{ (s, j) \in [0, T] \times \N; [X_{s-}^j \in \Ur, X_s^j \in U_{r+\e}^c] \ \text{or} \ [X_{s-}^j \in U_{r+\e}^c, X_{s}^j \in \Ur] \\
&\quad \ \text{or} \ [X_{s-}^j \in U_{r-\e}, X_s^j \in U_{r}^c] \ \text{or} \ [X_{s-}^j \in U_{r}^c, X_{s}^j \in U_{r-\e}] \big\} < \infty, \ \text{$\P_\mu$-a.s.} 
\end{align*}
\end{lemma}
\vskip 3mm

We set $r(i)=r$ if $i$ is odd and $r(i)=r+1$ if $i$ is even. 
We set $\bar{\sigma}_0=\bar{\sigma}_0^{[\k]}=0$ and
for $i \in\N$ let
\begin{align*}
\bar{\sigma}_i &= \bar{\sigma}_i(\Xi) := \inf \{ t>\bar{\sigma}_{i-1};  \text{$\Xi_t(\Uri) \neq \Xi_{t-}(\Uri)$ or $\Xi_t(\partial \Uri) \geq 1$}\}, \\
\bar{\sigma}_i^{[\k]} &= \bar{\sigma}_i^{[\k]}(\X^{[\k]},\Xi^{[\k]}) \\
&:= \inf \{ t>\bar{\sigma}_{i-1}^{[\k]}; \text{$\X_t^{[\k]} \notin \Uri^\k$ or $\Xi_t^{[\k]}(\Uri) \neq \Xi_{t-}^{[\k]}(\Uri)$ or $\Xi^{[\k]}_t(\partial \Uri) \geq 1$}  \}.
\end{align*}
For $\Xi \in W(\Msi)$, we set $\lab_{\rm path}^{[\k]}(\Xi)= (\bar{\X}^{\k}, \bar{\Xi}^{[\k]})$.
For $T\in \N$ we set
$$
\Omega_i = \{ \Xi ; \bar{\X}_{\bar{\sigma}_i}^{[\k]} \in \Ur^\k \},  \quad 
\Omega_i^{[\k]} = \{ (\X^{[\k]}, \Xi^{[\k]}); \X_{\bar{\sigma}_i^{[\k]}}^{[\k]} \in \Ur^\k \}. 
$$ 
We set $\Omega_{\infty} = \bigcap_{i=1}^{\infty} \Omega_i$ and $\Omega_{\infty}^{[\k]} = \bigcap_{i=1}^{\infty} \Omega_i^{[\k]}$. In addition, we set $\bar{\sigma}_{\infty}(\omega) = \lim_{i \to \infty} \bar{\sigma}_i(\omega)$ on $\Omega_{\infty}$ and $\bar{\sigma}_{\infty}^{[\k]}(\omega) = \lim_{i \to \infty} \bar{\sigma}_i^{[\k]}(\omega)$ on $\Omega_{\infty}^{[\k]}$.

\begin{lemma} \label{L:42e}
Suppose that the assumptions in Theorem \ref{T:31c} hold. Let $\mathfrak{A}_r^\k$ be as in Lemma \ref{L:42b}. Then, for all $(\x_\k, \eta) \in \mathfrak{A}_r^\k$ the following holds:
\begin{enumerate}
\item $\bar{\sigma}_{\infty} =\infty$ for $\P_{\unl^{[\k]}(\x_\k, \eta)}(\cdot; \Omega_{\infty})$-a.s. 
\item $\bar{\sigma}_{\infty}^{[\k]} =\infty$ for $\P_{(\x_\k, \eta)}^{[\k]}(\cdot; \Omega_{\infty}^{[\k]})$-a.e. 
\end{enumerate}
\end{lemma}
\begin{proof}
Let $T\in\N$.
Assume that $\bar{\sigma}_{\infty} \le T$. By \As[NBJ], which is derived from Lemma \ref{L:53b} and \As[NCL], we see that $\mathcal{N}:=m_{r+1,T}(\lab_{\rm path}(\Xi))$ is finite $\P_{\mu}$-a.e.
Hence, if $n \geq \mathcal{N}$ then $X_t^n \notin U_{r+1}$ for any $t \in [0, T]$. Thus, it is sufficient to deal with $\bar{\X} =( X^j )_{j=1}^{\mathcal{N}}$. 
By definition, on $\Omega_{\infty}$ we see that $\bar{\X}_t^{[\k]} \in \Ur^\k$ for any $t < \bar{\sigma}_{\infty}$.
 Then the cardinality of the set of particles in $\{ X_t^j \}_{j=\k+1}^\mathcal{N}$ which jumps from $\Uri$ to $\Uri^c$ or from $\Uri^c$ to $\Uri$ at $\bar{\sigma}_i$ is infinite. 
From the assumption that $\bar{\sigma}_{\infty} \le T$,
 there exist $j_1, j_2 \in \{k+1,k+2,\dots, \mathcal{N}\}$ such that 
\begin{equation*}
\#\{ i \in \N; X_{\bar{\sigma}_{2i}(\Xi)}^{j_1} \neq X_{\bar{\sigma}_{2i}(\Xi)-}^{j_1} \} = \infty, \quad 
\#\{ i \in \N; X_{\bar{\sigma}_{2i-1}(\Xi)}^{j_2} \neq X_{\bar{\sigma}_{2i-1}(\Xi)-}^{j_2} \} = \infty.
\end{equation*}
Then we choose sequences 
$\{ i_{\ell}^1 \}_{\ell=1}^{\infty} =  \{ i \in 2\N; X_{\bar{\sigma}_{i}(\Xi)}^{j_1} \neq X_{\bar{\sigma}_{i}(\Xi)-}^{j_1} \}$ with $i_{\ell}^1 < i_k^1$ ($\ell < k$) and $\{ i_{\ell}^2 \}_{\ell=1}^{\infty} =  \{ i \in 2\N-1; X_{\bar{\sigma}_{i}(\Xi)}^{j_2} \neq X_{\bar{\sigma}_{i}(\Xi)-}^{j_2} \}$ with $i_{\ell}^2 < i_k^2$ ($\ell < k$). 
For any $\e > 0$ let $R^a(\e)$ be the cardinality of the set
\begin{align*}
&\big\{ \ell \in \N; X_{\bar{\sigma}_{i_{\ell}^a}(\Xi)-}^{j_a} \in U_{r(i_{\ell}^a)}, X_{\bar{\sigma}_{i_{\ell}^a}(\Xi)}^{j_a} \in U_{r(i_{\ell}^a)+\e}^c \ \text{or} \ X_{\bar{\sigma}_{i_{\ell}^a}(\Xi)-}^{j_a} \in U_{r(i_{\ell}^a)+\e}^c, X_{\bar{\sigma}_{i_{\ell}^a}(\Xi)}^{j_a} \in U_{r(i_{\ell}^a)} \\
&\quad \  \text{or} \ X_{\bar{\sigma}_{i_{\ell}^a}(\Xi)-}^j \in U_{r(i_{\ell}^a)-\e}, X_{\bar{\sigma}_{i_{\ell}^a}(\Xi)}^j \in U_{r(i_{\ell}^a)}^c \ \text{or} \ X_{\bar{\sigma}_{i_{\ell}^a}(\Xi)-}^j \in U_{r(i_{\ell}^a)}^c, X_{\bar{\sigma}_{i_{\ell}^a}(\Xi)}^j \in U_{r(i_{\ell}^a)-\e} \big\}
\end{align*}
for $a=1, 2$. If $R^a(\e)=\infty$, we have a contradiction of Lemma \ref{L:42d}. Thus, $R^a(\e)<\infty$ a.s. for any $\e > 0$. Hence, for any $\e > 0$ there exists $\ell_{\e} \in \N$ such that if $\ell > \ell_{\e}$, then 
\begin{equation} \notag 
X_{\bar{\sigma}_{i_{\ell}^a}(\Xi)}^{j_a} \in U_{r(i_{\ell}^a)+\e} \setminus  U_{r(i_{\ell}^a)-\e} \ \text{for $a=1, 2$}.
\end{equation}
Because $\Xi$ is a Hunt process, there exists an admissible filtration $\{ \mathcal{F}_t \}$ such that $\Xi$ is strong Markov and quasi-left-continuous with respect to $\{ \mathcal{F}_t \}$. In particular, $\{ \mathcal{F}_t \}$ is right-continuous. 
As $\{ \bar{\sigma}_{i_{\ell}^a}(\Xi) \}_{\ell > \ell_{\e}}$ is a sub-sequence of $\{ \bar{\sigma}_i \}$, we have that $\{ \bar{\sigma}_{i_{\ell}^a}(\Xi) \}_{\ell > \ell_{\e}}$ is a sequence of $\{ \mathcal{F}_t \}$-stopping times and $\lim_{\ell \to \infty} \bar{\sigma}_{i_{\ell}^a}(\Xi) = \bar{\sigma}_{\infty}$. In addition, we have $d(X_{\bar{\sigma}_{i_{\ell}^a}(\Xi)}^{j_a}, \partial U_{r(i_{\ell}^a)}) \leq \e$ for $\ell > \ell_{\e}$, where we set $d(x, A) := \inf_{y \in A}|x-y|$ for a set $A$. Hence, by the quasi-left-continuity of $\Xi$, we have $d(X_{\bar{\sigma}_{\infty}}^{j_a}, \partial U_{r(i_{\ell}^a)}) \leq \e$ a.s. for any $\e>0$. Thus, we see that $X_{\bar{\sigma}_{\infty}}^{j_1} \in \partial \Ur$ and $X_{\bar{\sigma}_{\infty}}^{j_2} \in \partial U_{r+1}$. Then, we have $\Xi_{\bar{\sigma}_{\infty}} \in \partial \mathfrak{U}_r \cap \partial \mathfrak{U}_{r+1}$, where $\partial \mathfrak{U}_r = \{ \xi \in \M; \xi(\partial \Ur) \geq 1 \}$. Suppose that $\P_{\mu}(\bar{\sigma}_{\infty}<\infty; \Omega_{\infty})>0$. Then by the above arguments, we have
$$ 
P_{\mu}\left( \Xi_{\bar{\sigma}_{\infty}(\Xi)} \in \partial \mathfrak{U}_r \cap \partial \mathfrak{U}_{r+1}; \Omega_{\infty} \right) > 0. 
$$
That is $P_{\mu}\left( \sigma_{\partial \mathfrak{U}_r \cap \partial \mathfrak{U}_{r+1}} < \infty \right) > 0$, where $\sigma_{\partial \mathfrak{U}_r \cap \partial \mathfrak{U}_{r+1}}$ is the first hitting time for the set $\partial \mathfrak{U}_r \cap \partial \mathfrak{U}_{r+1}$. By the general theory of Dirichlet forms, we have $\Capa^{\mu}(\partial \mathfrak{U}_r \cap \partial \mathfrak{U}_{r+1}) > 0$. Additionally, by the locally boundedness of the ${\sf n}$-correlation functions $\rho^{\sf n}$ we can show that $\Capa^{\mu}(\partial \mathfrak{U}_r \cap \partial \mathfrak{U}_{r+1}) = 0$ (see, e.g., the argument of Lemma \ref{L:51a}). These statements contradict one another. 
Therefore we have $P_{\mu}(\bar{\sigma}_{\infty}\le T; \Omega_{\infty})=0$ for any $T\in\N$. 
We can then show (i). Statement (ii) is obtained by similar arguments. Thus, the proof is complete.
\end{proof}
Let
\begin{align*}
\tau_{r}=\tau_{r}(\Xi) &= \inf \{ t>0; \bar{\X}_t^{[\k]} \notin \Ur^\k \}, \\
\tau_{r}^{[\k]}=\tau_{r}^{[\k]}(\X^{[\k]}, \Xi^{[\k]}) &= \inf \{ t>0; \X_t^{[\k]} \notin \Ur^\k \}
\end{align*}
and
$$
\tau_{\infty} = \lim_{r \to \infty} \tau_{r}, \quad
\tau_{\infty}^{[\k]} = \lim_{r \to \infty} \tau_{r}^{[\k]}.
$$

\begin{lemma}\label{L:42f} 
Suppose that the assumptions in Theorem \ref{T:31c} hold. Let $\mathfrak{A}_r^\k$ be as in Lemma \ref{L:42b}. Then we have
\begin{equation} \label{:42l}
\P_{\unl^{[\k]}(\x_\k, \eta)}^{\tau_{r}} = \P_{(\x_\k, \eta)}^{[\k], \tau_{r}^{[\k]}} \circ (\unl^{[\k]})^{-1} \quad \text{for any $(\x_\k, \eta) \in \mathfrak{A}_r^\k$}. 
\end{equation}
Moreover,
\begin{equation} \label{:42m}
\P_{\unl^{[\k]}(\x_\k, \eta)}^{\tau_{\infty}} = \P_{(\x_\k, \eta)}^{[\k], \tau_{\infty}^{[\k]}} \circ (\unl^{[\k]})^{-1} \quad \text{for any $(\x_\k, \eta) \in \liminf_{r \to \infty} \mathfrak{A}_r^\k$}. 
\end{equation}
\end{lemma}
\begin{proof}
From Lemma \ref{L:42e}, we see $\X_t^{[\k]} \in \Ur^\k$ for any $t \in [0, \infty)$ on $\Omega_{\infty}^{[\k]}$. Hence combining Lemma \ref{L:42b} with the strong Markov property repeatedly, we have for any $(\x_\k, \eta) \in \mathfrak{A}_r^\k$
\begin{equation*}
\P_{\unl^{[\k]}(\x_\k, \eta)}^{\bar{\sigma}_i}(\cdot; \Omega_{\infty}) = \P_{(\x_\k, \eta)}^{[\k], \bar{\sigma}_i^{[\k]}}(\cdot; \Omega_{\infty}^{[\k]}) \circ (\unl^{[\k]})^{-1} \quad \text{for any $i \in \N$}. 
\end{equation*}
Therefore, by Lemma \ref{L:42e} we can show that
\begin{equation} \label{:42n}
\P_{\unl^{[\k]}(\x_\k, \eta)}(\cdot; \Omega_{\infty}) = \P_{(\x_\k, \eta)}^{[\k]}(\cdot; \Omega_{\infty}^{[\k]}) \circ (\unl^{[\k]})^{-1} \quad \text{for any $(\x_\k, \eta) \in \mathfrak{A}_r^\k$}. 
\end{equation}

Next, suppose that $(\X^{[\k]}, \Xi^{[\k]}) \notin \Omega_{\infty}^{[\k]}$. Then, there exists some $i$ such that $\X_{\bar{\sigma}_i^{[\k]}}^{[\k]} \notin \Ur^\k$ and $\X_{\bar{\sigma}_j^{[\k]}}^{[\k]} \in \Ur^\k$ for any $j<i$. Set 
\begin{equation*}
\Omega_{i*}^{[\k]} = \{\X_{\bar{\sigma}_i^{[\k]}}^{[\k]} \notin \Ur^\k, \X_{\bar{\sigma}_j^{[\k]}}^{[\k]} \in \Ur^\k \ \text{for any $j<i$} \}. 
\end{equation*}
In addition, set $\Omega_{i*} = \unl^{[\k]}(\Omega_{i*}^{[\k]})$. By Lemma \ref{L:42b} and the strong Markov property we have
\begin{equation} \label{:42o}
\P_{\unl^{[\k]}(\x_\k, \eta)}^{\bar{\sigma}_i}(\cdot; \Omega_{i*}) = \P_{(\x_\k, \eta)}^{[\k], \bar{\sigma}_i^{[\k]}}(\cdot; \Omega_{i*}^{[\k]}) \circ (\unl^{[\k]})^{-1} \quad \text{for any $(\x_\k, \eta) \in \mathfrak{A}_r^\k$}. 
\end{equation}
By construction we see that $\tau_{r} = \bar{\sigma}_i$ on $\Omega_{i*}$ and $\tau_r^{[\k]} = \bar{\sigma}_i^{[\k]}$ on $\Omega_{i*}^{[\k]}$. 
Hence \eqref{:42o} implies that
\begin{equation} \label{:42p}
\P_{\unl^{[\k]}(\x_\k, \eta)}^{\tau_{r}}(\cdot; \Omega_{i*}) = \P_{(\x_\k, \eta)}^{[\k], \tau_r^{[\k]}}(\cdot; \Omega_{i*}^{[\k]}) \circ (\unl^{[\k]})^{-1} \quad \text{for any $(\x_\k, \eta) \in \mathfrak{A}_r^\k$}. 
\end{equation}
We see that the state spaces $\Omega=\Omega_{\infty} + \sum_{i=1}^{\infty} \Omega_{i*}$ and $\Omega^{[\k]}=\Omega_{\infty}^{[\k]} + \sum_{i=1}^{\infty} \Omega_{i*}^{[\k]}$ have a probability of 1. Hence, using \eqref{:42n} and \eqref{:42p} we can show \eqref{:42l}: then \eqref{:42m} follows immediately. 
\end{proof}

As the conclusion of the above arguments, we can now prove Theorem \ref{T:31c}. 

\begin{proof}[Proof of Theorem \ref{T:31c}]
Let 
\begin{equation*}
\M_0 = \bigcap_{\k \in \N} \liminf_{r \to \infty} \unl^{[\k]}\left( \mathfrak{A}_r^\k \right) . 
\end{equation*}
Then, from \eqref{:42b}, we have \eqref{:31g}. In addition, \As[NEX] implies that $\tau_{\infty} = \infty$ for $\P_{\xi}$-a.s. and $\tau_\infty^{[\k]}=\infty$ for $\P_{(\x_\k, \eta)}^{[\k]}$- a.s. Hence, \eqref{:42m} leads to \eqref{:31h} and \eqref{:31k}. Claim (ii) is derived from the consistency \eqref{:31h}. Thus, the proof is complete.
\end{proof}

\subsection{Proof of Theorem \ref{T:32a}}\label{SS:43}

In this subsection, we prove Theorem \ref{T:32a}.
Let  $\varphi\in C_0^\infty(S^\k)$ and $f\in \mathcal{D}_{\circ}^\k$, $\k\in\N$.
From the definition of $\mathbb{D}^\k$ in \eqref{:31d},
$$
\mathbb{D}^\k[f,\varphi](\x_\k,\eta)=\nabla^{[\k]}[f,\varphi](\x_\k,\eta)=\frac{1}{2}\sum_{j=1}^\k
\int_S dy \;p(x_\k^j,y)\nadeffsa{j}{y}f(\x_\k,\eta)\nadeffsa{j}{y}\varphi(\x_\k).
$$
Then, 
\begin{align}\nonumber
&-\mathcal{E}^{\mu^{[\k]}}(f, \varphi)
= -\int_{\M\times S^\k}d\mu^{[\k]}(\x_\k,\eta)\mathbb{D}^\k[f,\varphi](\x_\k,\eta)
\\ \nonumber
&=\frac{1}{2}\sum_{j=1}^\k\int_{\M\times S^\k}\int_S 
d\mu^{[\k]}(\x_\k^{j,y},\eta)dx_\k^j
 \;p(y,x_\k^j)f(\x_\k,\eta)\nadeffsa{j}{y}\varphi(\x_\k)
\\ \label{:43a}
&\qquad +\frac{1}{2}\sum_{j=1}^\k\int_{\M\times S^\k}\int_S
d\mu^{[\k]}(\x_\k,\eta)dy \;p(x_\k^j,y)f(\x_\k,\eta)\nadeffsa{j}{y} \varphi(\x_\k).
\end{align}
By simple observation, it is clear that 
\begin{equation} \label{:43b}
\frac{d\mu^{[\k]}(\x_\k^{j,y},\eta)dx_\k^j}{d\mu^{[\k]}(\x_\k,\eta)dy}
=\frac{d\mu^{[1]}(y,\unl(\x_\k^{\hiku{j}},\eta))dx_\k^j}{d\mu^{[1]}(x_\k^j,\unl(\x_\k^{\hiku{j}},\eta))dy}=\frac{\rho^1(y)d\mu_{y}}{\rho^1(x_\k^j)d\mu_{x_\k^j}}(\unl(\x_\k^{\hiku{j}},\eta)),
\mbox{ $d\mu^{[\k]}dy$-a.s.}
\end{equation}
where 
$\x_{\k}^{\hiku{j}}= (x_\k^1, \ldots, x_\k^{j-1}, x_\k^{j+1}, \ldots, x_\k^{\k}) \in S^{\k-1}$ 
for $\x_\k = (x_\k^1, \ldots, x_\k^\k)$. 
Combining \eqref{:43a} and \eqref{:43b} with definition of $c(x,y,\xi)$ in (\ref{:32b}), we have
\begin{align}\label{:43c}
&-\mathcal{E}^{\mu^{[\k]}}(f, \varphi)
=\sum_{j=1}^\k\int_{\M\times S^\k}
d\mu^{[\k]}(\x_\k,\eta) f(\x_\k,\eta)
\int_S dy \; c(x_\k^j,y,\unl(\x_\k,\eta)) \nadeffsa{j}{y} \varphi(\x_\k)
\end{align}
Similarly, for $\varphi, \psi \in C_0^\infty (S^k)$
\begin{align}\label{:43d}
&\mathcal{E}^{\mu^{[\k]}}(\varphi,\psi)
=\frac{1}{2}\sum_{j=1}^\k\int_{\M\times S^\k}
d\mu^{[\k]}(\x_\k,\eta) 
\int_S dy \; c(x_\k^j,y,\unl(\x_\k,\eta)) \nadeffsa{j}{y} \varphi(\x_\k)\nadeffsa{j}{y} \psi(\x_\k).
\end{align}

Let $\varphi_L^{i} \in C_0^\infty(S)$, $i=1,2,\dots,d$
such that
$$
\varphi_L^{i}(x)=\varphi_L^{i}(x^1,x^2,\dots, x^d)=x^{i}, \quad x^{i}\in [-L,L]
$$
and set $\varphi_L^{j,i}(\x_\k)=\varphi_L^{i}(x_\k^j)$, $j=1,2,\dots,\k$, $i=1,2,\dots,d$.
We introduce the additive functional $A^{[\varphi_L^{j,i}]}=\varphi_L^{j,i}(\X^{[\k]}_t)-\varphi_L^{j,i}(\X^{[\k]}_0)$ of the process $(\X^{[\k]}_t,\Xi^{[\k]}_t)$. By Fukushima's decomposition \cite[Theorem 5.2.5]{fot.2}, we have
\begin{align}\label{:43d2}
A_t^{[\varphi_L^{j,i}]}=N_t^{[\varphi_L^{j,i}]}+M_t^{[\varphi_L^{j,i}]},
\end{align}
where $M_t^{[\varphi_L^{j,i}]}$ is a martingale additive functional of finite energy and
$N_t^{[\varphi_L^{j,i}]}$ is an additive functional of zero energy.
From (\ref{:43c}), we find that
\begin{align}\notag
N_t^{[\varphi_L^{j,i}]}&=\int_0^t ds
\int_S dy \ c(X_{s-}^j,y,\Xi_{s-})(\varphi_L^{i}(y)-\varphi_L^{i}(X_{s-}^j) )
\\ \notag 
&=\int_0^t ds\int_S du \int_0^\infty dr \;
\left\{\varphi_L^{i}(X_{s-}^j+u)-\varphi_L^{i}(X_{s-}^j)\right\}
a^j(u,r,\X,s),
\end{align}
where $\Xi= \Xi^{[\k]}+ \unl (\X^{[\k]})$.
Recall that $\Xi$ and $X^j$ do not depend on $\k$ by virtue of the consistency of the sequence $\{ (\X^{[\k]}, \Xi^{[\k]})\}$, as proved in Theorem \ref{T:31c}(ii).

From (\ref{:43d}), the martingale part can be represented as
\begin{align}\label{:43f}
&M_t^{[\varphi_L^{j,i}]}=\int_0^t \int_S  \int_0^\infty M^j(dsdudr) \;
\left\{\varphi_L^{i}(X_{s-}^j+u)-\varphi_L^{i}(X_{s-}^j)\right\}
 a^j(u,r,\X,s),
\end{align}
where
\begin{align}\label{:43f}
M^j(dsdudr)=N^j(dsdudr)-dsdudr,
\end{align}
and $\bN=\{N^j\}_{j=1}^\k$ is a sequence of $\{\mathcal{F}_t\}_{t\ge 0}$ adapted independent Poisson RPFs  on $[0,\infty)\times \R^d \times [0,\infty)$ with the common intensity measure $dsdudr$.  
Then, from \eqref{:43d2}--\eqref{:43f}, we have the following representations
\begin{align*}
A_t^{[\varphi_L^{j,i}]}=\int_0^t \int_S  \int_0^\infty N^j(dsdudr) \;
\left\{\varphi_L^{i}(X_{s-}^j+u)-\varphi_L^{i}(X_{s-}^j)\right\}
 a^j(u,r,\X,s).
\end{align*}

Setting $\displaystyle{\tau_L^j=\min_{1\le i \le d}\inf_{t>0}\{X_t^{j,i} \notin [-L,L]\}}$, $L>0$,
we have
$$
A_{t\wedge\tau_L^j}^{[x^{j,i}]}=N_{t\wedge\tau_L^j}^{[x^{j,i}]}+M_{t\wedge\tau_L^j}^{[x^{j,i}]},
\quad i=1,2,\dots,d, \; t\ge 0.
$$
Hence,
the process $X_t^j$ solves the following SDE
\begin{equation}\nonumber
X_{t\wedge\tau_L^j}^j=X_{0}^j+\int_0^{t\wedge\tau_L^j} \int_S  \int_0^\infty N^j(dsdudr) \;
u a^j(u,r,\X,s),
\quad t\ge 0
\end{equation}
for any $L>0$.
Because $X_t^j$ is conservative, we have $\lim_{L\to\infty}\tau_{L}^j=\infty$ and see that
the process $X_t^j$ solves the following ISDE
\begin{equation}\nonumber
X_t^j=X_0^j+\int_0^t \int_S  \int_0^\infty N^j(dsdudr) \;
u a^j(u,r,\X,s).
\end{equation}
Hence, we obtain (\ref{ISDE}). 
This completes the proof of Theorem \ref{T:32a}.
\qed

\subsection{Proof of Theorems \ref{T:33a} and \ref{T:33b}} \label{SS:44}

To prove Theorems \ref{T:33a} and \ref{T:33b} we use the following proposition, which are modifications of \cite[Theorem 3.1 and Theorem 3.2]{o-t.tail}.

\begin{proposition}\label{P:44a}
Assume that \As[TT] for $\mu$ holds. 
Assume that (\ref{ISDE}) with (\ref{:32e})-(\ref{:32f}) has a weak solution $(\X, \bN)$ satisfying \As[IFC], \As[AC] for $\mu$, \As[SIN] and \As[NBJ].
Then (\ref{ISDE}) with (\ref{:32e})-(\ref{:32f}) has a family of unique strong solutions $\{F_{\x}\}$ starting at $\x$  for $\mu\circ\mathfrak{l} ^{-1}$-a.s.  $\x$ under the constraints of \As[MF],\As[IFC], \As[AC] for $\mu$, \As[SIN] and \As[NBJ].
\end{proposition}
\vskip 3mm

Suppose that $\mu$, $\mu_{\rm Tail}^{\zeta}$, and $\tilde{\mathfrak{H}}$ satisfy (\ref{:33k})--(\ref{:33m})
for any $\zeta \in \tilde{\mathfrak{H}}$ with some $\tilde{\mathfrak{H}}$ satisfying $\mu(\tilde{\mathfrak{H}})=1$.

For a labeled process $\X$ we set $\Xi=\mathfrak{u}(\X)$. 
We assume that $\mu=P\circ\Xi^{-1}$ as before. We set
$$
P^\zeta(\cdot) := P(\cdot | \Xi_0^{-1}(\mathcal{T}(\M)))\big|_{\Xi_0=\zeta}
=\int P(\cdot | \X_0=\x)\mu_{tail}^{\zeta}\circ \mathfrak{l}^{-1}(d\x).
$$
\vskip 3mm
\begin{proposition}\label{P:44b}

Assume that $(\X, \bN)$ is a weak solution of (\ref{ISDE}) with (\ref{:32e})-(\ref{:32f}) satisfying \As[IFC], \As[SIN] and \As[NBJ]. Assume that, for $\mu$-a.s. $\zeta$, $(\X, \bN)$ under $P^\zeta$ satisfies \As[AC] for  $\mu_{\rm Tail}^{\zeta}$. 
Then, for $\mu$-a.s. $\zeta$, (\ref{ISDE}) with (\ref{:32e})-(\ref{:32f}) has a family of unique strong solutions $\{F_{\x}^\zeta\}$ starting at $\x$ for $P^\zeta\circ \X_0^{-1}$-a.s. $\x$
under the constraints of \As[MF], \As[IFC], \As[AC] for $\mu_{\rm Tail}^{\zeta}$, \As[SIN], and \As[NBJ].
\end{proposition}

The proofs of the above propositions are obtained in the same way as those of \cite[Theorem 3.1 and Theorem 3.2]{o-t.tail}, which are derived from the first tail theorem \cite[Theorem 4.1]{o-t.tail} and  the second tail theorem \cite[Theorem 5.1]{o-t.tail}. Although these two theorems refer to the systems of interacting Brownian motions, the arguments used in their proofs are robust and applicable to systems of interacting particles with jumps.

\vskip 3mm

\noindent{\it Proof of Theorem \ref{T:33a}}.
From Corollary \ref{C:32a} $\X=\mathfrak{l}(\Xi)$ is a weak solution of (\ref{ISDE}) with (\ref{:32e})--(\ref{:32f}).
As $\mu$ is a reversible measure of $\Xi$, \As[AC] holds.
\As[SIN], \As[TT] and \As[IFC] hold from the assumptions.
From Lemma \ref{L:53b} \As[NBJ] is satisfied.
Hence, we can apply Proposition \ref{P:44a} and obtain Theorem \ref{T:33a}.
This completes the proof.
\qed

\vskip 3mm

\noindent{\it Proof of Theorem \ref{T:33b}}.
Assume that \As[QG] and \As[EQ] for $\mu$ hold. From the definition of  quasi-Gibbs measures and the decomposition in (\ref{:33h}), we see that \As[QG] for $\mu_{\rm Tail}^{\zeta}$ holds for $\mu$-a.s. $\zeta$.
For $\mu$-a.s. $\zeta$, \As[A.2.$\k$] for $\mu_{\rm Tail}^{\zeta}$ is derived from that for $\mu$ by Fubini's theorem. Hence, we have (i).
From Lemma \ref{L:2a} and Theorem \ref{T:31b}, we can construct the unlabeled process 
$(\Xi_t, \{\P_\xi^\zeta\}_{\xi\in\M})$ associated with the quasi-regular Dirichlet form 
$(\mathcal{E}^{\mu^{\zeta}}, \mathcal{D}^{\mu^{\zeta}}, L^2(\M,\mu^{\zeta}))$. 
 Thus, we have (ii).
Assume that \As[SIN] holds for the process.
From Corollary \ref{C:32a} $\X=\mathfrak{l}(\Xi)$ is a weak solution of (\ref{ISDE}).
Thus, we obtain (iii).
Because $\mu_{\rm Tail}^{\zeta}$ is a reversible measure of $\Xi$, \As[AC] holds.
\As[IFC] holds from the assumptions. From Lemma \ref{L:53b} \As[NBJ] is satisfied.
Hence, we can apply Proposition \ref{P:44b} and obtain Theorem \ref{T:33b}.
This completes the proof.
\qed

\section{Sufficient conditions for assumptions} \label{S:5}
\subsection{Sufficient conditions for \As[NCL]}\label{SS:51}
In this subsection, we discuss sufficient conditions for \As[NCL].
For systems of interacting diffusion processes, sufficient conditions \As[NCL] were given in \cite{O04}.

First, we introduce the definition of density functions of $\mu$. 
Permutation-invariant functions $\sigma_r^{\n} : U_r^{\n} \to [0,\infty)$ are called density functions of $\mu$ if
\begin{equation} \label{:51a}
\frac{1}{{\n}!}\int_{U_r^{\n}}f_r^{\n}(\x_{\n})\sigma_r^{\n}(\x_{\n})d\x_{\n} = \int_{\M_{r, \n}}f(\xi)\mu(d\xi)
\end{equation}
for all bounded $\sigma[\pi_r]$-measurable functions $f$, where $f_r^{\n}$ is a $U_r^{\n}$-representation of $f$ defined in Section 2. 
Then, a sufficient condition for \As[NCL] is given as follows. 

\begin{lemma}\label{L:51a}
{\rm (i)} Let $d=1$. Assume that the conditions in Lemma \ref{L:2a} are satisfied.
If there exist positive constants $\Ct\label{con:NCL}=\cref{con:NCL}(r, \sf n)$ such that 
\begin{equation} \label{:51b}
\sigma_r^{\sf n}(x^1, \ldots, x^{\sf n}) \leq \cref{con:NCL}\min_{i \neq j}|x^i-x^j|, \quad \text{for any $x^1, \ldots, x^{\sf n} \in U_r$, $r, {\sf n} \in \N$, }
\end{equation}
then \As[NCL] holds. 

\noindent{\rm (ii)} 
Let $d \geq 2$. Assume that the conditions in Lemma \ref{L:2a} are satisfied. If the density functions $\sigma_r^{\n}$ are bounded for any $r, \n \in \N$, then \As[NCL] holds. 
\end{lemma}

\begin{proof}
We first prove (i).
For $\n, m, r, q \in \N$, we set
\begin{align*}
&H_{\sf n}(m, r, q) = \{ \xi \in \mathfrak{M} ;  \xi(U_{2^r}) = \n, \ \xi(\mathcal{A}_{r, m}) = 0, \min_{x,y \in \pi_{2^r-1}(\xi), x\not= y}|x-y| < 2^{-q},  \}, 
\end{align*}
where we set $\mathcal{A}_{r, m} = U_{2^r+\frac{1}{m}}\setminus U_{2^r}=\{ x \in S; 2^r \leq |x| < 2^r + \frac{1}{m} \}$. 
From the monotonicity and the sub-additivity of the capacity,
if 
\begin{equation}\label{:51c}
\inf_{q \in \N} {\rm Cap}^{\mu}(H_\n(m, r, q)) = 0,
\; \text{for any $\n, m, r \in \N$}
\end{equation}
is satisfied, then \As[NCL] holds.

We prepare some functions.
Let ${\sf w}$ be an increasing smooth function on $\R$ such that  that ${\sf w}(x)=1$, $x\ge 1$, ${\sf w}(x)=0$, $x\le 0$, and  ${\sf w}'(x) \le \Ct\label{con:w}$ on $\R$ for some $\cref{con:w}>0$.  
We set 
\begin{equation*}
h_q(x) = \begin{cases}
 {\sf w}\left( \frac{\log |x|}{\log 2^{-q}} \right), & x \neq 0, \\
1, & x = 0, 
\end{cases}
\quad 
g(\xi) = \begin{cases}
 \prod_{x \in \pi_{\mathcal{A}_{r, m}}} {\sf w}(m(|x|-2^r)), & \xi( \mathcal{A}_{r, m}) \not= 0, \\
1, & \xi( \mathcal{A}_{r, m}) = 0.
\end{cases}
\end{equation*}
Then, we introduce the function $\hat{h}_q(\xi) : \mathfrak{M} \to \mathbb{R}$ defined by
\begin{equation*}
\hat{h}_q(\xi) 
= \begin{cases} 
\left\{ \int_{U_{2^r}} \xi(dx) \int_{U_{2^r}} ( \xi \setminus x )(dy)h_q(x-y) {\sf w}(2^r-|x|) {\sf w}(2^r-|y|)  \right\} g(\xi) , &\text{if $\xi \in \mathfrak{M}_{2^r, \n}$. } 
\\
&
\\
0, & \text{otherwise.}
\end{cases}
\end{equation*}
By definition we have $\hat{h}_q (\xi) =0$, $\xi \notin \mathfrak{M}_{2^r, \n} $, and $\hat{h}_q(\xi) \geq 1$ for $\xi \in H_\n(m, r, q)$.
 
For $\mathfrak{N}\subset \mathfrak{M}$, $A,B \subset \R^d$, we set
$$
J(\mathfrak{N},A,B)=\int_{\mathfrak{N}} \mu(d\xi) \int_{A} \xi(dx) \int_{B} (\hat{h}_q(\xi^{x, y})-\hat{h}_q(\xi))^2 p(x, y) dy.
$$
We divide $\mathcal{E}(\hat{h}_q, \hat{h}_q)$ into as the following six terms: 
$\displaystyle{
\mathcal{E}(\hat{h}_q, \hat{h}_q) 
= \sum_{k=1}^6 J_q^k
}$, 
where
\begin{align*}
&J_q^1=J(\mathfrak{M}_{2^r, \n}, U_{2^r}, U_{2^r}), \quad J_q^2=J(\mathfrak{M}_{2^r, \n}, U_{2^r}^c, U_{2^r}),
\quad J_q^3=J(\mathfrak{M}_{2^r, \n-1}, U_{2^r}^c, U_{2^r}),
\\
&J_q^4=J(\mathfrak{M}_{2^r, \n}, U_{2^r}, U_{2^r}^c), \quad J_q^5=J(\mathfrak{M}_{2^r, \n+1}, U_{2^r}, U_{2^r}^c),
\quad J_q^6=J(\mathfrak{M}_{2^r, \n}, \mathcal{A}_{r, m}, \mathcal{A}_{r, m}).
\end{align*}

To show \eqref{:51c}, we need to show that
\begin{align}\label{:51d}
\lim_{q \to \infty} J_q^k = 0, \quad 1\le k \le 6,
\end{align}
for fixed $\n, m, r \in\N$.
We first consider the case $k=2$. 
In $J_q^2$, note that $\hat{h}_q(\xi^{x, y})=0$, because $\xi^{x,y}\notin \mathfrak{M}_{2^r, \n}$. In addition, $||x|-2^r| \leq |x-y|$ for $x \in \mathcal{A}_{r, m}$ and $y \in U_{2^r}$. Hence, by the definition of $\hat{h}_q$, we have
\begin{align}\notag
J_q^2 &= \int_{\mathfrak{M}_{2^r, \n}}\mu(d\xi) \int_{\mathcal{A}_{r, m}\cup U_{2^r+\frac{1}{m}}^c} \xi(dx)  \int_{U_{2^r}} \hat{h}_q(\xi)^2 p(x, y)dy  
\\ \notag 
&\leq \cref{con:w}^2 \int_{\mathfrak{M}_{2^r, \n}}\mu(d\xi) \; \hat{f}_q^\n(\xi)^2 \int_{\mathcal{A}_{r, m}} \xi(dx) \int_{U_{2^r}}  |x-y|^2 p(x, y)dy  
\\ \label{:51e}
&\quad + \int_{\mathfrak{M}_{2^r, \n}}\mu(d\xi) \; \hat{f}_q^\n(\xi)^2 
\int_{U_{2^r+\frac{1}{m}}^c} \xi(dx) \int_{U_{2^r}}  p(x, y)dy,  
\end{align}
where
\begin{equation*}
\hat{f}_q^\n(\xi) = 
\begin{cases}
\displaystyle{\int_{U_{2^r}} \xi(dx) \int_{U_{2^r}}}(\xi\setminus x)(dy)
 h_q(x-y), & \text{if $\xi \in \mathfrak{M}_{2^r, \n}$,} \\
0, & \text{otherwise. }
\end{cases}
\end{equation*} 
In addition, by \As[p.1], \As[p.2], and \As[A.2], we see that $\int_S (|x-y|^2 \wedge 1) p(x, y)\rho^1(y)dy < \infty$ for any $x \in S$. Then,
\begin{align}\notag 
&\int_{\mathfrak{M}_{2^r, \n}}\mu(d\xi) \; \int_{\mathcal{A}_{r, m}} \xi(dx) \int_{U_{2^r}}  |x-y|^2 p(x, y)dy <\infty,
\\ \label{:51g}
&\int_{\mathfrak{M}}\mu(d\xi)\int_{U_{2^r+\frac{1}{m}}^c} \xi(dx) \int_{U_{2^r}}  p(x, y)dy  < \infty. 
\end{align}  
Note that $\hat{f}_q^\n(\xi) \leq \n(\n-1)/2$ for any $\n, q \in \N$ and $\xi \in \mathfrak{M}$, and $\lim_{q \to \infty} \hat{f}_q^\n(\xi) = 0$ for a.s. $\xi$. Thus, 
from \eqref{:51e}, \eqref{:51g}, and Lebesgue's dominated convergence theorem, we have \eqref{:51d} for $k=2$. Using a similar approach, we can obtain \eqref{:51d} for $k=3, 4, 5, 6$. 

It remains to show the case $k=1$. Note that $\xi, \xi^{x, y} \in \mathfrak{M}_{2^r, \n}$ in $J_q^1$. 
Hence, in $J_q^1$, we have
\begin{equation}\label{:51h} \textstyle 
\Big|\hat{h}_q(\xi^{x, y}) - \hat{h}_q(\xi)\Big|
\leq 2 \  \Big| \int_{U_{2^r}}(\xi \setminus x)(dz) {\sf H}(x,y,z) {\sf w}(2^r-|z|)   \Big| 
\end{equation}
with
$$
{\sf H}(x,y,z)=h_q(z-y){\sf w}(2^r-|y|) - h_q(z-x){\sf w}(2^r-|x|).
$$
Note that 
\begin{align*}
\xi &\mapsto 4 \int_{U_{2^r}} \xi(dx)
\int_{U_{2^r}} p(x, y) dy  \Big( \int_{U_{2^r}}(\xi \setminus x)(dz) {\sf H}(x,y,z) {\sf w}(2^r-|z|)  \Big)^2
\end{align*}
is a $\sigma(\pi_{2^r})$-measurable function. 
Thus, the upper bound of $J_q^1$ can be given by the integral using the $\n$-density function $\sigma_{2^r}^{\sf n}$ of $\mu$ defined in \eqref{:51a}. 
By \eqref{:51h}, we have
\begin{align}\notag 
J_q^1 &\leq \frac{4}{\n!} \int_{U_{2^r}^{\n}}d\x_{\n} \sum_{k=1}^{\n} \int_{U_{2^r}} \ dy \Big( \sum_{j \neq k} {\sf H}(x^k,y,x^j) {\sf w}(2^r-|x^j|) \Big)^2 p(x^k, y) \sigma_{2^r}^{\n}(\x_{\n})
\\ \label{:51j}
&\leq \frac{4(\n-1)}{\n!} \int_{U_{2^r}^{\n}} d\x_{\n} \int_{U_{2^r}} dy \ 
 \sum_{k=1}^{\n} \sum_{j \neq k}{\sf H}(x^k,y,x^j)^2  p(x^k, y) \sigma_{2^r}^{\n}(\x_{\n}), 
\end{align}
where we use the inequalities $(\sum_{i=1}^{\n-1} a_i)^2 \leq (\n-1)\sum_{i=1}^{\n-1} a_i^2$ and ${\sf w}(x) \leq 1$. From \eqref{:51b}, we know that there exists a positive constant $\Ct\label{cnI} = \cref{cnI}(r, \n)$ such that
\begin{align}\notag 
&\text{R.H.S. of \eqref{:51j}} 
\leq \cref{cnI} \int_{U_{2^r}^3}dxdydz {\sf H}(x,y,z)^2  p(x, y) |x-z| 
\\  \notag 
&\leq 2\cref{cnI} \int_{U_{2^r}^3}dxdydz \{ h_q(z-y)-h_q(z-x) \}^2{\sf w}(2^r-|y|)^2 p(x, y) |x-z|
\\ \notag 
&\quad +2\cref{cnI}\int_{U_{2^r}^3}dxdydz h_q(z-x)^2\{ {\sf w}(2^r-|y|)-{\sf w}(2^r-|x|) \}^2  p(x, y) |x-z|dx \\ \label{:51k}
&=: J_q^{11} + J_q^{12}. 
\end{align}
Because ${\sf w}(x) \leq 1$ for any $x \in \R$, we have
\begin{equation} \label{:51l}
J_q^{11} \leq 2\cref{cnI} \int_{U_{2^r}^3}dxdydz \{ h_q(z-y)-h_q(z-x) \}^2 p(x, y) |x-z|\to 0, \quad q\to\infty, 
\end{equation}
from a straightforward calculation. Additionally, from the triangle inequality and ${\sf w}'(x) \leq \cref{con:w}$ for any $x \in \R$ we see that
\begin{equation} \label{:51m}
J_q^{12} \leq 2\cref{cnI}\cref{con:w} \int_{U_{2^r}^3} dxdydz \ h_q(z-x)^2|y-x|^2 p(x, y) |x-z|
\to 0, \quad q\to\infty, 
\end{equation}
by the fact that $h_q(x) \to 0$ a.e. as $q \to \infty$ and Lebesgue's dominated convergence theorem. 
Therefore, by \eqref{:51j}--\eqref{:51m}, we have \eqref{:51d} for $k=1$.
From \eqref{:51c} and \eqref{:51d}, we have (i).

For the case when $d\ge 2$ we obtain (ii) through a similar calculation as for (i). 
This completes the proof.
\end{proof}

Note that $0 \leq \sigma_r^\m(\x_\m) \leq \rho^\m(\x_\m)$ in general.
This is a direct consequence of Lemma \ref{L:51a}.  

\begin{corollary} \label{C:51a}
Let $d=1$. Assume that the conditions in Lemma \ref{L:2a} are satisfied.
If there exist positive constants $\Ct\label{51}=\cref{51}(\m, r)$ such that
\begin{equation}\label{:51n}
\rho^\m(x^1,x^2,\dots,x^\m)\le \cref{51}\min_{\substack{i, j \\ i \neq j}}|x^i-x^j|
\quad \mbox{ for all $(x^j)_{j=1}^\m \in U_{r}^{\m}$ with $x^i\not= x^j$},
\end{equation}
then \As[NCL] holds.
\end{corollary}

For DRPFs, the same argument gives the following result, from which we obtain \cite[Theorem 2.1]{O04}.

\begin{corollary} \label{C:51b}
Let $d=1$. In addition, suppose that $\mu$ is a DRPF with kernel $\mathbb{K}$.
Assume that $\mathbb{K}$ is locally Lipschitz continuous. Then \As[NCL] holds.
\end{corollary}
\begin{proof}
By \cite[Lemma 4.3]{O04} we have \eqref{:51b}. Then, by Lemma \ref{L:51a}(i),  we have \As[NCL] for $\mu$. Thus the proof is complete. 
\end{proof}

\begin{example}\label{E:51a}
${\rm Sine}_2$, ${\rm Bessel}_{\alpha, 2}$ $(\alpha \geq 1)$ and ${\rm Airy}_2$ are DRPFs on $\R$ with locally Lipschitz continuous kernels given by 
\begin{align}\label{:51o}
{\sf K}_{{\rm sin}, 2}(x, y) &= S(x-y), \quad \text{$x, y \in \R$, $x \neq y$},  
\\ \notag
{\sf K}_{{\rm Bessel}, \alpha, 2}(x, y) &= \frac{J_{\alpha}(\sqrt{x})\sqrt{y}J_{\alpha}'(\sqrt{y}) - \sqrt{x}J_{\alpha}'(\sqrt{x})J_{\alpha}(\sqrt{y})}{2(x-y)}, \quad \text{$x, y \in [0, \infty)$, $x \neq y$},   
\\ \label{:51q}
{\sf K}_{\Ai, 2}(x, y) &= \frac{\Ai(x)\Ai'(y)-\Ai'(x)\Ai(y)}{x-y}, \quad \text{$x, y \in \R$, $x \neq y$},  
\end{align}
respectively, where $S(x)=\sin(\pi x)/(\pi x)$, $J_{\alpha}$ is the Bessel function of order $\alpha$, and $\Ai$ is the Airy function. 
By Corollary \ref{C:51b}, \As[NCL] hold for these DRPFs. 

The Ginibre RPF is a DRPF on $\C (\simeq \R^2)$ with a locally Lipschitz kernel given by 
\begin{equation*}
{\sf K}_{{\rm Gin}}(z, w) = \frac{1}{\pi}e^{-(|z|^2+|w|^2)/2}e^{z\bar{w}}, \quad \text{$z, w \in \C$}, 
\end{equation*}
where $\bar{w}$ is the complex conjugate of $w$. By the definition of the correlation functions for DRPFs, we can easily see that $\rho^{\m}_{{\rm Gin}}$ is locally bounded for each $\m \in \N$, where $\rho^{\m}_{{\rm Gin}}$ is the $\m$-correlation function of the Ginibre RPF. Therefore, by Lemma \ref{L:51a}(ii), if $\mu$ is the Ginibre RPF, \As[NCL] holds. 
\end{example}

We also discuss \As[NCL] for quaternion determinantal RPFs (QDRPFs), which are RPFs whose correlation functions are given by quaternion determinants. 
We give the definition of quaternion determinants in (\ref{:6c}). 

We consider quaternion-valued kernels ${\sf K}$ on $\R \times \R$.
We assume 

\vskip 3mm

\noindent \As[CQ-1] $\sf K$ is represented using some local Lipschitz continuous functions ${\sf K}_1$, ${\sf K}_2$, ${\sf K}_3$ as 
\begin{equation*} 
{\sf K}(x, y) = \Upsilon\left( \begin{bmatrix} {\sf K}_1(x, y) & {\sf K}_2(x, y) \\ {\sf K}_3(x, y) - \e(x-y) & {\sf K}_1(y, x) \end{bmatrix} \right), 
\end{equation*}
or 
\begin{equation*} 
{\sf K}(x, y) = \Upsilon\left( \begin{bmatrix} {\sf K}_1(x, y) & {\sf K}_2(x, y) \\ {\sf K}_3(x, y) & {\sf K}_1(y, x) \end{bmatrix} \right), 
\end{equation*}
where $\Upsilon$ is the map defined in (\ref{:6b})
\vskip 3mm

\noindent \As[CQ-2]
${\sf K}_2$, ${\sf K}_3$ are antisymmetric, that is ${\sf K}_2(x, y)=-{\sf K}_2(y, x)$ and ${\sf K}_3(x, y)=-{\sf K}_3(y, x)$ hold. 

\vskip 3mm

We obtain the following lemma from a straightforward calculation. 
\begin{lemma} \label{L:51b}
Assume that \As[CQ-1] and \As[CQ-2] hold.
 Let $I$ be a bounded interval of $\R$. Then there exists a positive constant $\Ct\label{q1}$ depending on $I$ such that for any $i=0, 1, 2, 3$ and $x, y \in I$  
\begin{equation*} 
|[{\sf K}(x, x){\sf K}(y, y)-{\sf K}(x, y){\sf K}(y, x)]^{(i)}| \leq \cref{q1}|x-y|,  
\end{equation*}
where $[ \cdot ]^{(i)}$ for $i=0, 1, 2, 3$ is defined in \eqref{:6a}. 
\end{lemma}

From this lemma, with the expansion of a quaternion determinant, the locally boundedness of ${\sf K}_i$, and combinatorial arguments, we can show the following proposition. 
\begin{proposition} \label{P:51a}
Assume that \As[CQ-1] and \As[CQ-2] hold. Then there exists a positive constant $\Ct\label{q2}$ depending on ${\sf n}$ and $I$ such that  
\begin{align*}
\left| \qdet_{1 \leq i, j \leq {\sf n}}\left[ {\sf K}(x^i, x^j)\right] \right| \leq \cref{q2} \min_{\substack{1 \leq i, j \leq {\sf n} \\ i \neq j}}|x^i-x^j|.
\end{align*}
\end{proposition}
By this proposition, we can verify \eqref{:51n} for the QDRPFs associated with kernels satisfying \As[CQ-1] and \As[CQ-2]. 

\begin{example}(Sine, Airy RPF for $\beta=1, 4$) \label{E:51b}
Let $D(x) = dS(x)/dx$, $I(x) = \int_0^x S(x) dx$ for $x \in \R$, where $S$ is the function in \eqref{:51o}. In addition, we set 
\begin{align*}
J_1(x, y) &= {\sf K}_{\Ai, 2}(x, y)+\frac{1}{2}\Ai(x)\left( 1-\int_y^{\infty} \Ai(u)du \right), \quad x, y \in \R,  \\
J_4(x, y) &= {\sf K}_{\Ai, 2}(x, y) -\frac{1}{2}\Ai(x)\int_y^{\infty} \Ai(u)du, \quad x, y \in \R,
\end{align*}
where  ${\sf K}_{\Ai, 2}$ is defined in \eqref{:51q}. Note that these are locally bounded functions. 
The Sine and Airy RPFs for $\beta=1, 4$ are QDRPFs with the following kernels \cite{AGZ10, Meh04}: 
\begin{align*}
{\sf K}_{{\rm sin}, 1}(x, y) &= \Upsilon \left( \begin{bmatrix} S(x-y) & D(x-y) \\ I(x-y)-\e(x-y) & S(x-y) \end{bmatrix} \right), \quad x, y \in \R, \\
{\sf K}_{{\rm sin}, 4}(x, y) &= \Upsilon \left( \begin{bmatrix} S(2(x-y)) & D(2(x-y)) \\ I(2(x-y)) & S(2(x-y)) \end{bmatrix} \right), \quad x, y \in \R, \\
{\sf K}_{\Ai, 1}(x, y) &= \Upsilon \left( \begin{bmatrix} J_1(x, y) & -\frac{\partial}{\partial y}J_1(x, y) \\ \int_y^x J_1(u, y)du-\e(x-y) & J_1(y, x) \end{bmatrix} \right), \quad x, y \in \R, \\
\frac{1}{2^{\frac{2}{3}}} {\sf K}_{\Ai, 4}\left( \frac{x}{2^{\frac{2}{3}}}, \frac{y}{2^{\frac{2}{3}}} \right) &= \frac{1}{2} \Upsilon \left( \begin{bmatrix} J_4(x, y) & -\frac{\partial}{\partial y}J_4(x, y) \\ \int_y^x J_4(u, y)du & J_4(y, x) \end{bmatrix} \right), \quad x, y \in \R,
\end{align*}
where $\e(t)$ is the function such that $\e(t)=-1/2$ $(t > 0)$, $\e(t)=0$ $(t = 0)$, $\e(t)=1/2$ $(t < 0)$. 
Note that the quaternion matrices $[{\sf K}_{\ast, \beta}(x^i, x^j)]_{1 \leq i, j \leq {\sf n}}$ with $\ast = {\rm sin}$ or $\Ai$ and $\beta=1, 4$ are self-dual. In particular, $D(x-y)$, $D(2(x-y))$, $-\frac{\partial}{\partial y}J_1(x, y)$, $-\frac{\partial}{\partial y}J_4(x, y)$ are antisymmetric functions. Hence, the assumptions in Lemma \ref{L:51b} hold for ${\sf K}_{\ast, \beta}$ with $\ast = {\rm sin}$ or $\Ai$ and $\beta=1, 4$.

By Proposition \ref{P:51a} and the definition of the correlation functions for QDRPFs, we can check \eqref{:51n} for these RPFs. Therefore, by Corollary \ref{C:51a}, we can see that \As[NCL] holds for these RPFs. 
\end{example}

\subsection{Sufficient conditions for \As[NEX]} \label{SS:52}

In this subsection, we discuss sufficient conditions for \As[NEX].
We write $(X_t, \Xi_t^{[1]})$ for $(X_t^{[1]}, \Xi_t^{[1]})$ to simplify the notation.

\begin{lemma}\label{L:52a}
Suppose that the conditions in Theorem \ref{T:31a} hold.
Assume that $p$ is translation-invariant, i.e. $p(x,y)= p(y-x)$.
Then, 
\begin{align}\label{:52a}
\P^{[1]}_{(x, \xi)} \left( \sup_{t\in [0,T]} |X_t| < \infty \ \text{for all $T \in \N$} \right) = 1 \quad \text{for $\mu^{[1]}$-a.s. $(x, \xi) \in S \times \M$}. 
\end{align}
In particular, \As[NEX] holds.
\end{lemma}

\begin{proof}
Let $\tau_M(X)= \inf\{t>0 ; |X_t|>M\}$, $M\in\N$, and $\tau_\infty (X)=\lim_{M\to\infty}\tau_M(X)$. Then, by the argument in the proof of Theorem \ref{T:32a},
we have 
\begin{align*}
&X_t=X_0+\int_0^t \int_S  \int_0^\infty N^1(dsdudr) \;
u a(u,r,X_{s-},\Xi_{s-}^{[1]}), \quad 0\le t <\tau_\infty(X).
\end{align*}
Set
$$
a_{\mathsf{M}}(u,r,x,\eta)= \mathbf{1}\left(0\le r \le p(x,x+u)\right)
$$
and 
$$
a_{\mathsf{A}}(u,r,x,\eta)= \mathbf{1}\left(p(x,x+u) \le r \le p(x,x+u)\left(1+\frac{\rho^1(x+u)}{\rho^1(x)}
\frac{d\mu_{x+u}}{d\mu_x}(\eta)\right)\right)
$$
We introduce the processes defined by
\begin{align*}
&X_t^{(\mathsf{M})}=\int_0^t \int_S  \int_0^\infty N(dsdudr) \;
u a_\mathsf{M}(u,r,X_{s-},\Xi_{s-}^{[1]}),
\\
&X_t^{(\mathsf{A})}=\int_0^t \int_S  \int_0^\infty N(dsdudr) \;
u a_\mathsf{A}(u,r,X_{s-},\Xi_{s-}^{[1]}).
\end{align*}
Obviously, 
$$
X_t=X_0+X^{(\mathsf{M})}_t+X^{(\mathsf{A})}_t, \quad 0\le t <\tau_\infty(X).
$$
and $X^{(\mathsf{M})}$ is the jump-type Markov process with the generator
$$
Lf(x)=\int_S dy \; p(x,y)\{f(y)-f(x)\}=\int_S dy \; p(y-x)\{f(y)-f(x)\}
$$
and is conservative by virtue of \As[p.1] and \As[p.2]. 
We introduce a Markov process $(P, Y_t)$ with the generator $L$ starting from $0$.
By simple observation, we see that when $X_{t-}=x$ and $X_t=y$,
$\Xi_t^{[1]}=\Xi_{t-}^{[1]}$ and
\begin{enumerate}
\item 
$X_t^{(\mathsf{M})}-X_{t-}^{(\mathsf{M})}=y-x$ with probability
$
\displaystyle{\frac{\rho^1(x)d\mu_x}{\rho^1(x)d\mu_x+\rho^1(y)d\mu_y}(\Xi_t^{[1]})}$,

\item $X_t^{(\mathsf{A})}-X_{t-}^{(\mathsf{A})}=y-x$ with probability
$
\displaystyle{\frac{\rho^1(y)d\mu_y}{\rho^1(x)d\mu_x+\rho^1(y)d\mu_y}(\Xi_t^{[1]})}$.
\end{enumerate}

For a fixed $T>0$ we set $\hat{X}_t=X_{T-t}$ and $\hat{\Xi}_t = \Xi_{T-t}$.
Note that $\mu^{[1]}$ is a reversible measure of $(X_t,\Xi_t^{[1]})$.
Then we can represent process $\hat{X}$ as
\begin{align*}
&\hat{X}_t=\hat{X}_0+\int_0^t \int_S  \int_0^\infty \tilde{N}(dsdudr) \;
u a(u,r,\hat{X}_{s-},\hat{\Xi}_{s-}^{[1]}),
\quad 0\le t < \hat{\tau}_\infty(\hat{X})
\end{align*}
with another Poisson RPF $\tilde{N}$. Here, $\hat{\tau}_M(\hat{X})= \inf\{t>0 ; |\hat{X}_t|>M\}$, $M\in\N$, and $\hat{\tau}_\infty (\hat{X})=\lim_{M\to\infty}\hat{\tau}_M(\hat{X})$. 
We define the process $\hat{X}^{(\mathsf{M})}$ and $\hat{X}^{(\mathsf{A})}$ as
$$
\hat{X}_t^{(\mathsf{M})}=X_{T-t}^{(\mathsf{A})}-X_{T}^{(\mathsf{A})} 
\quad \text{ and } \quad 
\hat{X}_t^{(\mathsf{A})}=X_{T-t}^{(\mathsf{M})}-X_{T}^{(\mathsf{M})}.
$$
When $\hat{X}_{(T-t)-}=y$ and $\hat{X}_{T-t}=x$,
$\hat{\Xi}_{T-t}^{[1]}=\hat{\Xi}_{(T-t)-}^{[1]}=\Xi_t^{[1]}=\Xi_{t-}^{[1]}$ and
\begin{enumerate}
\item 
$\hat{X}_{T-t}^{(\mathsf{M})}-\hat{X}^{(\mathsf{M})}_{(T-t)-}=x-y$ with probability
$
\displaystyle{\frac{\rho^1(y)d\mu_y}{\rho^1(x)d\mu_x+\rho^1(y)d\mu_y}(\Xi_t^{[1]})}$,

\item $\hat{X}_{T-t}^{(\mathsf{A})}-\hat{X}^{(\mathsf{A})}_{(T-t)-}=x-y$ with probability
$
\displaystyle{\frac{\rho^1(x)d\mu_x}{\rho^1(x)d\mu_x+\rho^1(y)d\mu_y}(\Xi_t^{[1]})}$.
\end{enumerate}
Then, we see that $X^{(\mathsf{M})}_0=0$, $\hat{X}^{(\mathsf{M})}_0=0$, and
\begin{align}\label{:52b}
X_t-X_0= X^{(\mathsf{M})}_t + \hat{X}^{(\mathsf{M})}_{(T-t)-}-\hat{X}_T^{(\mathsf{M})},
\end{align}
and $\hat{X}^{(\mathsf{M})}$ is also a jump-type Markov process  with the generator $L$ starting from $0$, that is, the processes $X^{(\mathsf{M})}$, $\hat{X}^{(\mathsf{M})}$, and $Y$ are identical in  distribution.

Let $\mathbb{M}^{[1]}$ be the measure on $S\times \M$ defined by (\ref{:31f}). We prove that
\begin{align} \label{:52c}
\lim_{R\to\infty}\mathbb{M}^{[1]}(X_0\in U_r, \; \sup_{t\in [0,T]}|X_t|>R)=0.
\end{align}

We divide the event $\{X_0\in U_r, \; \sup_{t\in [0,T]}|X_t|>R\}$ into infinitely many events, and have that, for any $\delta>0$,
\begin{align} \notag
&\mathbb{M}^{[1]}(X_0\in U_r, \; \sup_{t\in [0,T]}|X_t|>R)
\\ \notag
&= \sum_{m=0}^\infty\mathbb{M}^{[1]}(X_0\in U_r, \; \sup_{t\in [0,T]}|X_t|>R, \; m <  |X_T|\le m+1)
\\ \notag
&= \sum_{m=0}^\infty \mathbb{M}^{[1]}(X_0\in U_r, \;  \sup_{t\in [0,T]}|X_t|>R, \; m <  |X_T|\le m+1, \; \sup_{t\in[0,T]}|X^{(\mathsf{M})}_t|> R^\delta )
\\ \notag
&+\sum_{m=0}^\infty\mathbb{M}^{[1]}(X_0\in U_r,  \sup_{t\in [0,T]}|X_t|>R, \; m <  |X_T|\le m+1, \; \sup_{t\in [0,T]}|X^{(\mathsf{M})}_t|\le R^\delta)
\\ \label{:52d}
&\equiv I_1 + I_2.
\end{align}
Then, we have
\begin{align}\notag 
I_1 
&\le \mathbb{M}^{[1]}(X_0\in U_r, \;  \sup_{t\in[0,T]}|X^{(\mathsf{M})}_t|> R^\delta)
\le \rho^1[U_r] P(\sup_{t\in[0,T]}|Y_t|> R^\delta)
\end{align}
and, from (\ref{:52b})  
\begin{align} \notag
I_2
&\le \sum_{m=0}^{R-1} 
\mathbb{M}^{[1]}( \sup_{t\in [0,T]}|\hat{X}_t^{(\mathsf{M})}|>R-R^\delta, \; m\le |\hat{X}_0|< m+1, \; m - R^\delta - r< |\hat{X}^{(\mathsf{M})}_T|\le m+ R^\delta +r)
\\ \notag
&+ \sum_{m=R}^\infty 
\mathbb{M}^{[1]}( m\le |\hat{X}_0|< m+1, \; m - R^\delta - r< |\hat{X}^{(\mathsf{M})}_T|\le m+ R^\delta +r)
\\ \notag
&\le \sum_{m=0}^{R-1} 
\rho^1[U_{m+1}\setminus U_m]
P(\sup_{t\in [0,T]}|Y_t|>R-R^\delta, \; m - R^\delta - r< |Y_T|\le m + R^\delta +r)
\\ \notag
&+\sum_{m=R}^\infty \rho^1[U_{m+1}\setminus U_m] P(m - R^\delta - r< |Y_T|\le m+ R^\delta +r)
\\ \notag
&\le \rho^1[U_R]P(\sup_{t\in [0,T]}|Y_t|>R-R^\delta)
\\ \notag
&+\sum_{m=R}^\infty \rho^1[U_{m+1}\setminus U_m] \int_{m - R^\delta - r< |x|\le m+ R^\delta +r}p_T(x)dx
\\ \label{:52f}
&\equiv I_{21} +I_{22},
\end{align}
where $\rho^1[A]=\int_A \rho^1(x) dx$ for $A\in \Borel(\R^d)$ and $p_T$ is the density of $Y_T$.
Because $Y_t$ is a jump process with conditions \As[p.1] and \As[p.2], we see that
\begin{align}\label{:52g}
&p_T(x) = O(|x|^{-d-\alpha}), \quad |x|\to\infty.
\end{align}
Using a similar argument to show the reflection principle of a simple random walk, 
we see that 
\begin{align*}
&P(\sup_{t\in[0,T]} Y_t^j > R) \le 2 P(Y_T^j > R), \quad R>0
\end{align*}
for $j=1,2,\dots,d$, and so 
\begin{align}\label{:52h}
&P(\sup_{t\in [0,T]} |Y_t| > R) \le 2d P(Y_T^j > R/\sqrt{d}), \quad R>0.
\end{align}
Then, from (\ref{:52h}) there exists some $\Ct \label{5_23}>0$ such that
\begin{align*}
P(\sup_{t\in [0,T]}|Y_t|>R-R^\delta) 
\le \cref{5_23}\int_{|x|>R-R^\delta-r} p_T(x)dx.
\end{align*}
Then, by \As[A.2],
\begin{align}\label{:52k}
I_{21} 
\le \cref{5_23} R^\kappa \int_{|x|>R-R^\delta-r} p_T(x)dx.
\end{align}
By simple calculation with \As[A.2] there exists some $\Ct \label{5_24}>0$
\begin{align} \label{:52l}
I_{22}&\le 
\cref{5_24} \int_{|x|>R-R^\delta-r}|x|^{\delta +\kappa} p_T(x)dx 
\end{align}
Combining  (\ref{:52d})--(\ref{:52l}), we have
\begin{align} \notag
\mathbb{M}^{[1]}(X_0\in U_r, \; \sup_{t\in [0,T]}|X_t|>R) &\le \rho^1[U_r] P(\sup_{t \in [0,T]}|Y_t|> R^\delta)
\\ \label{:52m}
& + (\cref{5_23}+\cref{5_24}) \int_{|x|>R-R^\delta-r}|x|^{\delta + \kappa}p_T(x)dx.
\end{align}
Then from (\ref{:52g}), for any $\delta\in (0,\alpha-\kappa)$, we have
\begin{align}\label{:52n}
&\lim_{R\to\infty} \int_{|x|>R-R^\delta-r}|x|^{\delta + \kappa} p_T(x)dx =0.
\end{align}
From (\ref{:52m}) and (\ref{:52n}), we obtain (\ref{:52c}), and (\ref{:52a}) follows. 
This completes the proof of Lemma \ref{L:52a}.
\end{proof}

\subsection{Sufficient conditions for \As[NBJ]}\label{SS:53}

In this subsection, we discuss sufficient conditions for \As[NBJ].
We set 
\begin{align*}
&\mathfrak{N}(p,q) = \{
\xi=\sum \delta_{x^j}\in \M : |x^i-x^j|\ge 2^{-p} \mbox{ if $x^i, x^j \in U_q$ }
\},
\quad \mathfrak{N}=\bigcap_{q\in \N}\bigcup_{p\in\N}\mathfrak{N}(p,q).
\end{align*}
We first show the following lemma.

\begin{lemma}\label{L:53a}
Assume that the conditions in Theorem \ref{T:31a} and \As[NCL] hold.
Then, for any $q, T \in \N$,
\begin{align}\label{:53b}
\P_{\mu}( \text{$\exists p \in \N$ \ s.t \ $\Xi_t\in\mathfrak{N}(p,q)$, $\forall t\in [0,T]$} )=1,
\end{align}
that is, for $\P_\mu$-a.s. $\omega$, there exists $p=p(\omega)$
such that $\Xi_t(\omega)$ does not hit the set $\mathfrak{N}(p,q)^c$ for $t\in [0,T]$.
\end{lemma}
\begin{proof} Fix $q, T \in \N$. 
Let $\tau_p(\omega) \equiv \inf\{ t \in [0, \infty) ; \Xi_t(\omega) \notin \mathfrak{N}(p, q) \}$ for $p \in \N$. By the definition, $\{ \mathfrak{N}(p, q) \}$ is increasing in $p$, and so $\tau_p(\omega)$ is as well. Hence, the limit $\tau_{\infty}(\omega) \equiv \displaystyle{\lim_{p \to \infty}} \tau_p(\omega) \in [0, \infty]$ exists. Equation \eqref{:53b} means that
\begin{align}\label{:53b2}
\P_{\mu} (\tau_\infty >T )=1.
\end{align}

Suppose that $\omega$ satisfies $\tau_{\infty}(\omega) \leq T$ and $\displaystyle{\lim_{p \to \infty} \Xi_{\tau_p}(\omega)} = \Xi_{\tau_{\infty}}(\omega)$ with the vague topology. 
We often omit $\omega$ for simplicity of notation, if this will not cause confusion.
From $\tau_{\infty} \leq T$ we have $\tau_p \leq T$ for any $p \in \N$. 
Combining this with the definition of $\tau_p$, we see that, for any $p \in \N$, there exists a sequence $\{ t_n^{(p)} \}_{n \in \N}$ such that $t_n^{(p)} \in \{ t; \Xi_t \notin \mathfrak{N}(p, q) \}$, $t_n^{(p)} \downarrow \tau_p$. 
As $\Xi$ has the right continuous path, we have
\begin{equation} \label{:53c}
\Xi_{\tau_p} 
= \lim_{n \to \infty} \Xi_{t_n^{(p)}} \in \overline{\mathfrak{N}(p, q)^c}, \quad \forall p \in \N. 
\end{equation}
Here, $\overline{\mathfrak{N}(p, q)^c} = \{ \xi \in \mathfrak{M} ; |x^i-x^j| \leq 2^{-p} \ \text{for some $x^i, x^j \in U_q$} \}$.

We assume that
$\Xi_{\tau_{\infty}} \notin \{ \xi \in \mathfrak{M} ; x^i = x^j \ \text{for some $x^i, x^j \in U_q$} \}$. 
Set $\Xi_{\tau_{\infty}} = \sum_i \delta_{z^i}$. 
Because the number of particles in $U_{q+1}$ of $\Xi_{\tau_{\infty}}$ is finite we can label particles in $U_q$ as $z^1, \ldots, z^{\n}$ and those in $U_{q+1} \setminus U_q$ as $z^{\n+1}, \ldots, z^{\hat{\n}}$.  
Noting that $U_q$ is a closed set, there exists some $\e>0$ such that $|z^i-z^j| > \e$ for $1\le i,j \le \n$ and $U_{\e}(z^i) \subset U_q^c$ for $i \geq \n+1$. 
As $\Xi_{\tau_p} \to \Xi_{\tau_\infty}$, $p\to\infty$ in the vague topology, we can set $\Xi_{\tau_p} = \sum_i \delta_{z^{p, i}}$ such that $\lim_{p \to \infty} z^{p, i} = z^i$ for any $i\in \N$. 
Then there exists some $M \in \N$ such that $|z^i-z^{p, i}| < \e/4$ for $1\le i \le \hat{\n}$ and $z^{p, i} \notin U_q$ for $i \geq \hat{\n}+1$, for any $p\ge M$. Thus, by the triangle inequality, we see that
\begin{align*}
|z^{p, i}-z^{p, j}| > |z^j-z^i| - |z^j-z^{p, j}| - |z^i-z^{p, i}| >  \frac{\e}{2}, 
\end{align*}
for $1 \leq i, j \leq \n$, $p\ge M$. 
Moreover, because $U_{\e}(z^i) \subset U_q^c$ and $|z^i-z^{p, i}| < \e/4$ for any $\n +1 \leq i \leq \hat{\n}$, we see that $z^{p, i} \notin U_q$. 
Take $p \in \N$ satisfying $p\ge M$ and $\e/2 > 2^{-p}$. Then, we have $|z^{p, i}-z^{p, j}| > 2^{-p}$ for any $i, j = 1, \ldots, \n$ and $z^{p, i} \notin U_q$ for any $i \geq \n+1$. Thus, we have $|z^{p, i}-z^{p, j}| > 2^{-p}$ for any $z^{p, i}, z^{p, j} \in U_q$. Therefore, $\Xi_{\tau_p} \in \mathfrak{N}(p, q)$. This contradicts \eqref{:53c}, and so we have $\Xi_{\tau_{\infty}} \in \{ \xi \in \mathfrak{M} ; x^i = x^j \ \text{for some $x^i, x^j \in U_q$} \}$. Then, we see that $\Xi_{\tau_{\infty}} \in \{ \xi \in \mathfrak{M} ; x^i = x^j \ \text{for some $x^i, x^j \in U_q$} \}$
if $\tau_{\infty} \leq T$ and $\displaystyle{\lim_{p \to \infty}} \Xi_{\tau_p} = \Xi_{\tau_{\infty}}$. 
Hence, by \As[NCL] and the quasi-left continuity of $\Xi$,  we have 
\begin{equation}\nonumber 
\P_{\mu} ( \tau_{\infty} \leq T )
=\P_{\mu} ( \lim_{p \to \infty} \Xi_{\tau_p} = \Xi_{\tau_{\infty}}, \tau_{\infty} \leq T ) = 0. 
\end{equation}
We then obtain \eqref{:53b2}, and \eqref{:53b} follows. This completes the proof.
\end{proof}

\begin{lemma}\label{L:53b}
Assume that the conditions in Theorem \ref{T:31a} and \As[NCL] hold.
Then \As[NBJ] holds. 
\end{lemma}

\begin{proof}
Suppose that 
$\P_{\mu}( \mathsf{m}_{q,T}(\mathfrak{l}_{\rm path}(\Xi)) = \infty )>0$
for some $q,T\in\N$.
From Lemma \ref{L:53a}, with positive probability,
we can take $\omega$ and $p=p(\omega)\in \N$ such that  
\begin{align}\label{:53g}
&\mathsf{m}_{q,T}(\mathfrak{l}_{\rm path}(\Xi(\omega)))=\infty,
\\ \label{:53h}
&\Xi_t(\omega)\in \mathfrak{N}(p,q), \quad  t\in [0,T].
\end{align}
From \eqref{:53g} for any small $\delta>0$ there exists $x_0\in U_q$ such that $U_\delta(x_0)$ intersects $U_q$, and
$$
\mathbb{I}(\delta,x_0,\omega) =\{ j\in \N ; X_t^j(\omega) \in U_\delta(x_0) \mbox{ for some $t\in [0,T]$ }\}
$$
is an infinite set. Take $\delta$ such that $2^{-p}> 5\delta$.
We take intervals $[a_j,b_j]\subset [0,T]$ such that
$$
[a_j,b_j] \subset \{ t\in [0,T] ; X_t^j(\omega) \in U_{2\delta}(x_0)\}, \quad j\in \mathbb{I}(\delta,x_0,\omega).
$$
From \eqref{:53h}, for each time $t\in [0, T]$, the number of points $\Xi_t(U_{2\delta}(x_0))$ in $U_{2\delta}(x_0)$ is at most $1$. Therefore, the intervals $[a_j,b_j]$, $j\in \mathbb{I}(\delta,x_0)$ are disjoint. Let $\varphi$ be a smooth function on $\R^d$ satisfying $0 \le \varphi (x) \le 1$, $x\in\R$, and
$$
\varphi(x)= \begin{cases}
1 & \mbox{ if $x\in U_\delta(x_0)$}
\\
0 & \mbox{ if $x\notin U_{2\delta}(x_0)$}.
\end{cases}
$$
Set $F(\xi)= \langle \varphi, \xi\rangle$. Then, $F(\Xi_t(\omega))$ is not a right continuous function with left limits in $t$, which contradicts the fact that $\Xi$ is a Hunt process.
This completes the proof.
\end{proof}

\subsection{Sufficient conditions for \as[IFC]}\label{SS:54}

Let $\X=\lab_{\rm path}(\Xi)$ in Theorem \ref{T:31c}. 
From Corollary \ref{C:32a}, $\X$ is a weak solution of \eqref{ISDE} with \eqref{:32e} and \eqref{:32f}.
In this subsection, we give sufficient conditions for \As[IFC]. 
Set
$$
\M_{\rm s.i}^{[\m]}=\{
(\x_\m,\eta)\in S^{\m}\times \M : \mathfrak{u}(\x_\m)+\eta \in \M_{\rm s.i.}
\}
$$
and
$$
R_{p,q}(\eta)=\{\x_\m \in \tilde{U}_q^\m : \min_{1\le j < k \le \m}|x_\m^j-x_\m^k| > 2^{-p}, \:
\min_{1\le j\le \m, y\in\eta}|x_\m^j-y| > 2^{-p} 
\}, 
$$
where $\tilde{U}_q = \{ |x|<q \}$. 
Let $\bd{a}_n=\{ a_{n,r}\}_{r\in\N}=\{n2^{(d+\beta)r}\}_{r\in \N}$ as in \eqref{:41b}, and set $\bd{a} = \{\bd{a}_n\}_{n\in\N}$.
For $(p,q,n)\in \N^3$, set
$$
\mathfrak{K}[\bd{a}]_{p,q,n}=\{
(\x_\m,\eta)\in \M^{[\m]}_{\rm s.i.} : \x_\m\in R_{p,q}(\eta), \: \eta \in \M[\bd{a}_n^+] \},
$$
and set
\begin{equation*}
\mathfrak{K}[\bd{a}]_{q,n}=\bigcup_{p\in\N}\mathfrak{K}[\bd{a}]_{p,q,n},
\quad
\mathfrak{K}[\bd{a}]_q=\bigcup_{n\in\N}\mathfrak{K}[\bd{a}]_{q,n},
\quad
\mathfrak{K}\equiv \mathfrak{K}[\bd{a}]=\bigcup_{q\in\N}\mathfrak{K}[\bd{a}]_q.
\end{equation*}
The following lemma is obtained in the same way as \cite[Lemma 8.1]{o-t.tail}
by using \As[NCL] and Lemma \ref{L:41a} and \ref{L:41c} (see also Lemma \ref{L:52a}).
%
\begin{lemma}\label{L:54a}
Suppose that the assumptions in Theorem \ref{T:31c} hold.
For each $\m\in \N$ 
\begin{align}\notag
\P_\xi ( \lim_{q\to\infty}\lim_{n\to\infty}\lim_{p\to\infty}
\tau_{\mathfrak{K}[\bd{a}_n]_{p,q,n}}(\X^{[\m]},\Xi^{[\m]})=\infty )=1, \quad \mbox{for $\mu$-a.s. $\xi$,}
\end{align}
where $\tau_{\mathfrak{K}[\bd{a}_n]_{p,q}}$ is the exit time from $\mathfrak{K}[\bd{a}]_{p,q,n}$.
\end{lemma}
Let $\{\mathfrak{I}_k^{[\m]} \}_{k\in\N}$ be an increasing sequence of closed subsets of $S^{\m}\times \M$
satisfying
\begin{align}\notag 
\mathrm{Cap}^{\mu^{[\m]}}\Big(
\Big(\bigcup_{k\in\N}\mathfrak{I}_k^{[\m]}\Big)^c
\Big)=0.
\end{align}
Let $\Pi : S^{\m}\times \M \to  \M$ be the projection such that $\Pi(\x,\eta)=\eta$.
Then, 
\begin{align}\notag
\Pi (\mathfrak{K}[\bd{a}]_{p,q,n}\cap \mathfrak{I}_k^{[\m]})
=\{\eta\in \M : \mathfrak{K}[\bd{a}]_{p,q,n} \cap \mathfrak{I}_k^{[\m]} \cap (S^{\m}\times \{\eta \})\not= \emptyset \}.
\end{align} 
Let $p,q,n, k\in\N$ and $L>0$.
For $\x_\m, \y_\m \in S^{\m}$, $\eta\in \M$, and $1\le j \le \m$, we set
$$
C_{p,q,j}^L[\x_\m,\y_\m,\eta]
=\int_{|u|\le L} du \; |u||c(x_\m^j,x_\m^j+u,\eta+\mathfrak{u}(\x_\m)) -c(y_\m^j,y_\m^j+u,\eta+\mathfrak{u}(\y_m))|H_{\x_\m,\y_\m,\eta}^j(u),
$$
where 
$$
H_{\x_{\m},\y_{\m},\eta}^j(u)= \mathbf{1}\Big(
(\x_\m,\eta)\underset{p,q}{\sim} (\x_\m+\u^j,\eta), \;
(\y_\m,\eta) \underset{p,q}{\sim} (\y_\m+\u^j,\eta) \Big)
$$
with $\u^j= (0,\dots,0,\underset{j\mathrm{th}}u, 0,\dots 0) \in \R^{\m d}$, $1\le j \le \m$,
and $(\x_\m,\eta)\underset{p,q}{\sim} (\y_\m,\eta)$ means that $\x_\m$ and $\y_\m$ belong to the same connected component of $R_{p,q} (\eta)$. 
We also set
\begin{align}\notag
&\mathsf{C}_{1,L}(p,q,n,k)
=\sup \bigg\{
\frac{\max_{1\le j\le \m} C_{p,q,j}^L[\x_\m,\y_\m,\eta]}{|\x_\m-\y_\m|} 
; 
\\ \notag
&\qquad \qquad \qquad
\eta\in\Pi (\mathfrak{K}[\bd{a}]_{p,q,n}\cap \mathfrak{I}_k^{[\m]}), \;  
\x_\m\not=\y_\m, \; (\x_\m,\eta)\underset{p,q}{\sim} (\y_\m,\eta), 
\bigg\}
\\ \notag
&\mathsf{C}_{2,L}(p,q,n,k)
=\sup \bigg\{
\frac{\displaystyle{\max_{1\le j\le \m}}\; |\mathsf{d}^\mu (x_\m^j,x_\m^j+u,\eta+\mathfrak{u}(\x_\m^{\hiku{j}})) -2|}{|u|} 
; 
\\ \notag
&\qquad \qquad\qquad \qquad
\eta\in\Pi (\mathfrak{K}[\bd{a}]_{p,q,n} \cap \mathfrak{I}_k^{[\m]}), \;  (\x_\m, \eta) \underset{p,q}{\sim}(\x_\m+\u^j, \eta), \; |u|\le L
\bigg\}
\end{align}
Recall that $\mathsf{d}^\mu (x_\m^j,x_\m^j,\eta+\mathfrak{u}(\x_\m^{\hiku{j}}))=2$.

We set
$$
\Kpqn =\bigcup_{\eta\in \Pi(\mathfrak{K}[\bd{a}]_{p,q,n} \cap \mathfrak{I}_k^{[\m]})}
R_{p,q} (\eta)\times \{\eta\}
$$
For $(\X, \bN)=(\frak{l}_{\rm path}(\Xi), \bN)$ we make the following assumption.
\vskip 3mm

\noindent
\As[B1] For each $\m\in\N$, there exist a $\mu^{[\m]}$-version $\tilde{c}$ of the speed function $c$ 
and an increasing sequence $\{\mathfrak{I}_k^{[\m]}\}_{k\in\N}$ of closed subset of $S^{\m}\times \M$ such that 
\begin{align}\label{:53k}
&\P_\xi (\lim_{q\to\infty}\lim_{n\to\infty}\lim_{p\to\infty}\lim_{k\to\infty}\tau_{\Kpqn}(\X^{[\m]},\Xi^{[\m]})=\infty)=1, \quad \mbox{for $\mu$-a.s. $\xi$}.
\end{align}
For each $k\in\N$ and $p,q,n\in\N$
\begin{align} \label{:53m}
&\mathsf{C}_{1,L}(p,q,n,k) <\infty \quad \mbox{ for any $L\in\N$,}
\\ \label{:53n}
&\mathsf{C}_{2,L}(p,q,n,k) <\infty \quad \mbox{ for any $L\in\N$,}
\end{align}

\begin{lemma} \label{L:54b}
Let $\X=\lab_{\rm path}(\Xi)$ be the labeled process in Theorem \ref{T:31c}.
Assume that \As[B1] holds and $p$ is translation-invariant, i.e., $p(x,y)=p(y-x)$. Then

\noindent (i) the pathwise uniqueness of solutions of SDE (\ref{:33b})--(\ref{:33d}) holds.

\noindent (ii) \As[IFC] holds.
\end{lemma}
\begin{proof}
For simplicity, we assume $\m=1$. The general case $\m\in\N$ can be proved by the same argument. When $\m = 1$, SDE (\ref{:33b})--(\ref{:33d}) ca be rewritten as 
\begin{align}
&Y_t=Y_0+\int_0^t \int_S  \int_0^\infty N^1(dsdudr) \;
u a(u,r,Y_{s-},\Xi^{[1]}_{s-}), \label{5.34N}
\\ 
&Y_0=\x_1=x_{1}, \label{5.35N}
\\ 
&Y_t \in \mathbf{S}_{\rm SDE}^1(\X_t), \; \mbox{ for any $t>0$}. \label{5.36N}
\end{align}
It is obvious that $X = X^1$ is a solution. Let $Y$ be another solution of \eqref{5.34N}--\eqref{5.36N}.  
Take $L=2q$. Set
\begin{align*}
&\tau_{p,q,n,k}= \tau_{\Kpqnmone}(X,\Xi^{[1]})\wedge \tau_{\Kpqnmone}(Y,\Xi^{[1]}).
\end{align*}
Then,
\begin{align*}
&\E_\xi [|X_{t\wedge \tau_{p,q,n,k}}-Y_{t\wedge \tau_{p,q,n,k}}|]
\\ \nonumber
&\le \E_\xi\Big[\int_0^{t\wedge \tau_{p,q,n,k}} \int_S  \int_0^\infty dsdudr \; |u| |a(u,r,X_{s-},\Xi^{[1]}_{s-})-a(u,r,Y_{s-},\Xi^{[1]}_{s-})|\Big]
\\  \nonumber
&= \E_\xi\Big[\int_0^{t\wedge \tau_{p,q,n,k}} \int_S   dsdu \; |u| |c(X_{s-},X_{s-}+u,\Xi^{[1]}_{s-})-c(Y_{s-},Y_{s-}+u,\Xi^{[1]}_{s-})|\Big]
\\
&\le \mathsf{C}_{1,L}(p,q,n,k)\int_0^t ds \ \E_\xi\Big[|X_{s\wedge \tau_{p,q,n,k}}-Y_{s\wedge \tau_{p,q,n,k}}|\Big].
\end{align*}
Here, we use (\ref{:53m}) in the last equation.
Hence, for $\P_\xi$ a.s.,
\begin{align*}
& X_{t} =Y_{t}, \quad t\in [0, \tau_{p,q,n,k})\quad \mbox{and} \quad
\tau_{\Kpqnmone}(Y,\Xi^{[1]})=\tau_{\Kpqnmone}(X,\Xi^{[1]}).
\end{align*}
From (\ref{:53k}), we obtain (i). 

For (ii), it is sufficient to show the existence of a strong solution.
For $\delta>0$, we consider the following SDE:
\begin{align*}
&Y_t^\delta=Y_0+\int_0^t \int_{|u|>\delta}  \int_0^\infty N^1(dsdudr) \;
u a(u,r,Y_{s-}^{\delta},\Xi^{[1]}_{s-}),
\\ 
&Y_0=x\in U_q,
\\ 
&Y_t^\delta \in \mathbf{S}_{\rm SDE}^\1(\X_t) \; \mbox{ for any $t>0$}.
\end{align*}
As $Y^{\delta}_t$, $t\in [0, \tau_{\langle \mathfrak{K}[\bd{a}]_{p,q,n} \cap \mathfrak{I}_k^{[1]}\rangle }(Y^{\delta},\Xi^{[1]})]$, almost surely has a finite number of jumps, we see that it is a functional of $(\bN^{1},\Xi^{[1]})$.
Set 
\begin{align*}
&\tau_{p,q,n,k}^{\delta}= \tau_{\langle \mathfrak{K}[\bd{a}]_{p,q,n} \cap \mathfrak{I}_k^{[1]}\rangle }(X,\Xi^{[1]})\wedge \tau_{\Kpqnponemone}(Y^{\delta},\Xi^{[1]})
.\end{align*}
From \eqref{:53m} and \eqref{:53n},
\begin{align*}
&\E_\xi[|X_{t\wedge \tau_{p,q,n,k}^{\delta}}-Y_{t\wedge \tau_{p,q,n,k}^{\delta}}^{\delta}|]
\\ \nonumber
&\le \E_\xi\Big[\big|\int_0^{t\wedge \tau_{p,q,n,k}^{\delta}} \int_S  \int_0^\infty dsdudr \; u(a(u,r,X_{s-},\Xi_{s-}^{[1]})-a(u,r,Y_{s-}^{\delta},\Xi^{[1]}_{s-}))\big|\Big]
\\  \nonumber
&\leq \E_\xi\Big[\int_0^{t\wedge \tau_{p,q,n,k}^{\delta}} \int_{|u|>\delta}   dsdu \; |u| |c(X_{s-},X_{s-}+u,\Xi^{[1]}_{s-})-c(Y_{s-}^{\delta},Y_{s-}^{\delta}+u,\Xi^{[1]}_{s-})|\Big]
\\
&+\E_\xi\Big[\int_0^{t\wedge \tau_{p,q,n,k}^{\delta}} \int_{|u|\le\delta}   dsdu \; |u|c(X_{s-},X_{s-}+u,\Xi^{[1]}_{s-})\Big]
\\
&\le \mathsf{C}_{1,L}(p,q,n,k)\int_0^t ds \E_\xi [|X_{s\wedge \tau_{p,q,n,k}^{\delta}}-Y_{s\wedge \tau_{p,q,n,k}^{\delta}}^{\delta}|]
+\mathsf{C}_{2,L}(p,q,n,k)\int_{|u|\le \delta}|u|^2 p(u)du.
\end{align*}
Then, for any $\delta>0$
\begin{align}\label{:53p}
\E_\xi[|X_{t\wedge \tau_{p,q,n,k}^{\delta}}-Y_{t\wedge \tau_{p,q,n,k}^{\delta}}^{\delta}|]
\le \mathsf{C}_{2,L}(p,q,n,k)e^{t\mathsf{C}_{1,L}(p,q,n,k)}\int_{|u|\le \delta}|u|^2 p(u)du.
\end{align}
Hence, from \As[p.2], for any $\varepsilon>0$, we can take $\delta>0$ sufficiently small such that
\begin{align*}
&\E_\xi[|X_{t\wedge \tau_{p,q,n,k}^{\delta}}-Y_{t\wedge \tau_{p,q,n,k}^{\delta}}^{\delta}|]
\le \varepsilon
.\end{align*}
If $|X_{t\wedge \tau_{p,q,n,k}^{\delta}}-Y_{t\wedge \tau_{p,q,n,k}^{\delta}}^{\delta}|< 2^{-(p+1)}$, we see that
\begin{align*}
\tau_{\Kpqnmone}(X,\Xi^{[1]}) < \tau_{\Kpqnponemone}(Y^{\delta},\Xi^{[1]}),
\end{align*}
and so
$\tau_{\Kpqnmone}(X,\Xi^{[1]}) =  \tau_{p,q,n,k}^{\delta}$.
Thus, by Chebyshev's inequality
\begin{align*}
&\P_{\xi}(\tau_{\Kpqnmone}(X,\Xi^{[1]}) =  \tau_{p,q,n,k}^{\delta}) 
\\
&\ge 1- \P_\xi (|X_{t\wedge \tau_{p,q,n,k}^{\delta}}-Y_{t\wedge \tau_{p,q,n,k}^{\delta}}^{\delta}|\ge 2^{-(p+1)})
\ge 1- 2^{p+1}\varepsilon.
\end{align*}
Then, $\tau_{p,q,n,k}^{\delta}$ converges to $\tau_{\Kpqnmone}(X,\Xi^{[1]})$ in probability as $\delta \to 0$.
From \eqref{:53p},
\begin{align*}
&\E_\xi[|X_{t\wedge \tau_{p,q,n,k}^{\delta}}-Y_{t\wedge \tau_{p,q,n,k}^{\delta}}^{\delta}|
:\tau_{\Kpqnmone}(X,\Xi^{[1]}) =  \tau_{p,q,n,k}^{\delta}]
\\
&\le \mathsf{C}_{2,L}(p,q,n,k)e^{t\mathsf{C}_{1,L}(p,q,n,k)}\int_{|u|\le \delta}u^2 p(u)du.
\end{align*}
Then, 
\begin{align}\notag
&\lim_{\delta\to 0}\E_\xi[|X_{t\wedge \tau_{\Kpqnmone}(X,\Xi^{[1]})}-Y_{t\wedge \tau_{\Kpqnmone}(X,\Xi^{[1]})}^{\delta}|]
\\ \notag
&=\lim_{\delta\to 0}\E_\xi[
|X_{ t \wedge \tau_{p,q,n,k}^{\delta} } -Y_{ t\wedge \tau_{p,q,n,k}^{\delta}}^{\delta}|
:\tau_{\Kpqnmone}(X,\Xi^{[1]}) =  \tau_{p,q,n,k}^{\delta}
]
\\\label{:53pp}
&=0.
\end{align}
Because $Y^\delta$ is a functional of $(\bN^{1},\Xi^{[1]})$,
from \eqref{:53k} and \eqref{:53pp}
$X$ is as well.
This completes the proof.
\end{proof}

We next examine sufficient conditions of \As[B1].
We set 
\begin{align}\label{:53q}
&\tilde{\mathrm{d}}^\mu(x,\eta)=\nabla_y\mathsf{d}^\mu (x,y , \eta)\Big|_{y=x}
\quad \mbox{ for } (x,\eta)\in \M^{[1]}_{\rm s.i.}
\end{align}
if the derivative exists.
Note that $\tilde{\mathrm{d}}^\mu(x,\eta)$ coincides with the logarithmic derivative $\mathrm{d}^\mu(x,\eta)$ of $\mu$ \cite{BDO}. 
For $p,q,n, k\in\N$, we set
\begin{align*}
b_j^\m(\x_\m, \eta)= \frac{1}{2}\tilde{\mathrm{d}}^\mu(x_\m^j, \eta + \mathfrak{u}(\x_\m^{\hiku{j}})),
\ 1 \le j \le \m, \quad 
b^{\m}(\x_\m, \eta) = (b_j^\m(\x_\m, \eta))_{j=1}^\m,
\end{align*}
and
\begin{align}\nonumber
&\mathsf{D}_{1}(p,q,n,k)
=\sup \bigg\{ \max_{1\le j\le \m}  
|b_j^\m(\x_\m,\eta)| 
; 
\eta\in\Pi (\mathfrak{K}[\bd{a}_n]_{p,q}\cap \mathfrak{I}_k^{[\m]}), \;  \x_\m \in R_{p,q} (\eta)
\bigg\}
\\ \nonumber
&\mathsf{D}_{2}(p,q,n,k)
=\sup \bigg\{
\frac{|b^{\m}(\x_\m, \eta)-b^{\m}(\y_\m, \eta)|}{|\x_\m-\y_\m|} ; 
 \eta\in\Pi (\mathfrak{K}[\bd{a}_n]_{p,q} \cap \mathfrak{I}_k^{[\m]}), \; 
 \\\nonumber
&\qquad\qquad\qquad\qquad\qquad\qquad\qquad\qquad\qquad\qquad 
\x_\m\not=\y_\m, \; (\x_\m, \eta) \underset{p,q}\sim (\y_\m,\eta)
\bigg\}.
\end{align}
We make the following assumption.

\vskip 3mm
\noindent \As[B2] For each $\m\in\N$, there exist a $\mu^{\m}$-version $\tilde{b}^\m$ of $b^\m$
and an increasing sequence $\{\mathfrak{I}_k^{[\m]}\}_{\k\in\N}$ of closed subsets of $S^{\m}\times \M$ such that
\begin{align} \notag 
&\lim_{k\to\infty} \mathrm{Cap}^{\mu^{[\m]}}(\mathfrak{K}_{p,q,n} \setminus \langle \mathfrak{K}_{p,q,n} \cap \mathfrak{I}_k^{[\m]} \rangle )=0,
\\ \label{:53s}
&\mathsf{D}_{1}(p,q,n,k) <\infty, \quad \mathsf{D}_{2}(p,q,n,k) <\infty.
\end{align}

\begin{lemma}\label{L:54c}
Suppose that $\mu$ has the logarithmic derivative $\mathrm{d}^\mu$ and $p$ is translation-invariant, i.e., $p(x,y)=p(y-x)$. 
Assume that \As[B2] holds. Then, \As[B1] holds.
\end{lemma}

\begin{proof} Let $j=1,2,\dots,\m.$
We set $\x_\m(t)=\x_\m+t\u^j$ and $\y_\m(t)=\y_m+t\u^j$ for $t\in [0,1]$. 
Assume that \As[B2] holds.
Suppose that $\eta\in\Pi (\mathfrak{K}[\bd{a}]_{p,q,n}\cap \mathfrak{I}_k^{[\m]})$, and that
$(\x_\m,\eta)\underset{p,q}{\sim} (\y_\m,\eta)$, $(\x_\m,\eta)\underset{p,q}{\sim} (\x_\m(t),\eta)$ and $(\y_\m,\eta)\underset{p,q}{\sim} (\y_\m(t),\eta)$, $t\in [0,1]$.
Because $(\x_\m(t),\eta)\underset{p,q}{\sim} (\y_\m(t),\eta)$,
from \eqref{:53q} and \eqref{:53s}
\begin{align}\notag
& |\mathsf{d}^\mu(x_\m^j,x_\m^j+u, \eta + \mathfrak{u}(\x_\m^{\hiku{j}}))-\mathsf{d}^\mu(y_\m^j,y_\m^j+u, \eta + \mathfrak{u}(\y_\m^{\hiku{j}}))| 
\\ \notag 
&\le 2|u| \int_0^1  \; |b_j^\m (\x_\m (t), \eta) - b_j^\m (\y_\m (t), \eta )|dt
\\ \label{:53v}
& \le 2\mathsf{D}_2 |u| \int_0^1 |\x_\m(t)-\y_m(t)|dt =  2\mathsf{D}_2|\x_\m -\y_\m| .
\end{align}

Next suppose that $(\x_\m,\eta), (\y_\m,\eta) \in\mathfrak{K}[\bd{a}]_{p-2,q,n} \subset \mathfrak{K}[\bd{a}]_{p,q,n}$
and 
\begin{align}\label{:53u}
 (\x_\m(1),\eta)\underset{p-2,q}{\sim} (\x_\m,\eta)\underset{p-2,q}{\sim} (\y_\m,\eta)\underset{p-2,q}{\sim} (\y_\m(1),\eta).
\end{align}
Noting that the distances of particles in $U_q$ are greater than $2^{-p+2}$,
we take an appropriate sequence $\{\u(k)\}_{k=1}^\ell$ with 
$\u(k)= (0,\dots,0,\underset{j\mathrm{th}}{u(k)}, 0,\dots 0) \in \R^{\m d}$ such that
$\displaystyle{\sum_{k=1}^\ell u(k) =u}$ and
$$
(\x_\m^k(0),\eta)\underset{p,q}{\sim} (\x_\m^k(t),\eta), \quad (\y_m^k(0),\eta)\underset{p,q}{\sim} (\y_\m^k(t),\eta), \quad t\in [0,1], \quad k=1,2,\dots,\ell,
$$
where 
$\x_\m^{1}(0)=\x_m$, $\y_\m^{1}(0)=\y_m$, and
$$
\x_\m^{k}(t)=\x_\m^{k-1}(1)+t\u(k),
\quad\y_\m^k(t)=\y_\m^{k-1}(1)+t\u(k), \quad t\in [0,1].
$$
Hence, using \eqref{:53v}  repeatedly, there exists some $\Ct\label{;54_a}>0$ such that 
\begin{align}\notag
&\max_{1\le j\le \m}  
|\mathsf{d}^\mu(x_\m^j,x_\m^j+u, \eta + \mathfrak{u}(\x_\m^{\hiku{j}}))-\mathsf{d}^\mu(y_\m^j,y_\m^j+u, \eta + \mathfrak{u}(\y_\m^{\hiku{j}}))| 
\\ \label{:53t}
&\qquad \le \cref{;54_a}|\x_\m-\y_\m||u|
\end{align}
for $(\x_\m,\eta), (\y_\m,\eta) \in\mathfrak{K}[\bd{a}_n]_{p-2,q}$ with (\ref{:53u}) and $|u|\le L$.
As $p$ is translation-invariant, we see that
\begin{align}\nonumber
C_{p,q,j}^L[\x_\m, \y_\m, \eta] \le \frac{\cref{;54_a}}{2}|\x_\m -\y_\m| \int_{|u|\le L} du |u|^2 p(u).
\end{align}
Then from \As[p.2] we obtain \eqref{:53m}.

We obtain (\ref{:53n}) from \eqref{:53t}. This completes the proof.
\end{proof}

\begin{remark}\label{R:54a}
From Lemmas \ref{L:54b} and \ref{L:54c}, \As[B2] is 
a sufficient condition to derive \As[IFC].
Sufficient conditions \As[C1] and \As[C2] for \As[B2] are given in \cite[Proposition 11.2]{o-t.tail} for systems of infinite Brownian particles with interactions. 
The argument in \cite[Sections 8.2 and 11.3]{o-t.tail} can be generalized to our case, because it is based on a general theory of Dirichlet forms.
We can check the conditions \As[C1] and \As[C2] for all examples in Section \ref{SS:34} in the same way as in \cite[Section 13]{o-t.tail}.
 \end{remark} 
 
\section{Appendix} \label{S:6}

We introduce some quaternion notations for $2 \times 2$ matrices (see also \cite[Chapter 2.4]{Meh04}). We set 
\begin{equation*}
\1 = \begin{bmatrix} 1 & 0 \\ 0 & 1 \end{bmatrix}, \quad 
{\bf e}_1 = \begin{bmatrix} i & 0 \\ 0 & -i \end{bmatrix}, \quad 
{\bf e}_2 = \begin{bmatrix} 0 & 1 \\ -1 & 0 \end{bmatrix}, \quad 
{\bf e}_3 = \begin{bmatrix} 0 & i \\ i & 0 \end{bmatrix}. 
\end{equation*}
A quaternion $q$ is represented as 
\begin{equation} \label{:6a}
q=q^{(0)}\1 + q^{(1)} {\bf e}_1 + q^{(2)} {\bf e}_2 + q^{(3)} {\bf e}_3, 
\end{equation}
where $q^{(i)}$ are complex numbers. 
We define a map $\Upsilon$ from the set of $2 \times 2$ complex matrices to that of quaternions as follows: 
\begin{equation}\label{:6b}
\Upsilon \left( \begin{bmatrix} a & b \\ c & d \end{bmatrix} \right) = \frac{1}{2}(a+d)\1 - \frac{i}{2}(a-d){\bf e}_1 + \frac{1}{2}(b-c){\bf e}_2 - \frac{i}{2}(b+c){\bf e}_3. 
\end{equation}
For a quaternion $q=q^{(0)}\1 + q^{(1)} {\bf e}_1 + q^{(2)} {\bf e}_2 + q^{(3)} {\bf e}_3$ $\bar{q}$ is defined by $\bar{q} = q^{(0)}\1 - q^{(1)} {\bf e}_1 - q^{(2)} {\bf e}_2 - q^{(3)} {\bf e}_3$. A matrix $A = [a_{ij}]_{i, j}$ whose entries are given by quaternions is said to be a quaternion matrix. A quaternion matrix $A=[a_{ij}]_{i, j}$ is called self-dual if $a_{ij} = \overline{a_{ji}}$ for any $i, j$. For a self-dual $n \times n$ quaternion matrix $A=[a_{ij}]_{i, j}$, we set
\begin{equation}\label{:6c}
\qdet A = \sum_{\sigma \in \mathfrak{S}_{\sf n}} {\rm sign}[\sigma] \prod_{i=1}^{L(\sigma)} [a_{\sigma_i(1)\sigma_i(2)} \cdots a_{\sigma_i(\ell-1)\sigma_i(\ell)} a_{\sigma_i(\ell)\sigma_i(1)}]^{(0)}, 
\end{equation}
where $\mathfrak{S}_{\sf n}$ is the symmetric group of degree $n$, $[ \cdot ]^{(0)}$ is defined in \eqref{:6a}, and we decompose $\sigma$ as products of the cyclic permutations $\{ \sigma_i \}_{i=1}^{L(\sigma)}$ with disjoint indices. In addition, we write each $\sigma_i$ as $(\sigma_i(1), \sigma_i(2), \ldots, \sigma_i(\ell_i))$, where $\ell_i$ is the length of the cyclic permutation $\sigma_i$. Here, note that the decomposition is unique up to the order of $\{ \sigma_i \}$. Hence, $\qdet A$ is well defined. $\qdet A$ is called the quaternion determinant of $A$. 






{it Acknowledgments.}
This work was supported by JSPS KAKENHI Grant Numbers 
JP17K14206, JP16H06338, JP19H01793.
We thank Stuart Jenkinson, PhD, from Edanz (htttps://jp.edanz.com/ac) for editing a draft of this manuscript.


\end{document}